\newtheorem{theorem}{Theorem}
\newtheorem{proposition}{Proposition}
\newtheorem{lemma}{Lemma}
\newtheorem{definition}{Definition}
\newcommand{\bigO}{\mathcal{O}}
\newcommand{\exactbigO}{\Theta}
\newcommand{\smallo}{o}
\newcommand{\integers}{\mathds{Z}}
\newcommand{\reals}{\mathds{R}}
\newcommand{\naturals}{\integers_{\geq 0}}
\newcommand{\complex}{\mathds{C}}
\newcommand{\mA}{\mathcal{A}}
\newcommand{\mB}{\mathcal{B}}
\newcommand{\mC}{\mathcal{C}}
\newcommand{\mD}{\mathcal{D}}
\newcommand{\mF}{\mathcal{F}}
\newcommand{\mH}{\mathcal{H}}
\newcommand{\mK}{\mathcal{K}}
\newcommand{\mN}{\mathcal{N}}
\newcommand{\mM}{\mathcal{M}}
\newcommand{\mL}{\mathcal{L}}
\newcommand{\vect}[1]{\bm{#1}}
\newcommand*{\eg}{\textit{e.g.}\@\xspace}
\newcommand*{\ie}{\textit{i.e.}\@\xspace}
\newcommand*{\aka}{\textit{a.k.a.}\@\xspace}
\newcommand*{\resp}{resp.\@\xspace}
\newcommand{\Real}{\operatorname{Re}}
\newcommand{\sg}{\operatorname{SG}}
\newcommand{\mg}{\operatorname{MG}}
\newcommand{\csg}{\operatorname{CSG}}
\newcommand{\cmg}{\operatorname{CMG}}
\newcommand{\core}{\operatorname{Core}}
\newcommand{\mcore}{\operatorname{MCore}}
\newcommand{\patch}{\operatorname{Patch}}
\newcommand{\kernel}{\operatorname{Kernel}}
\newcommand{\mcycle}{\operatorname{MCycle}}
\newcommand{\mk}{\operatorname{MK}}
\newcommand{\sk}{\operatorname{SK}}
\newcommand{\cmk}{\operatorname{CMK}}
\newcommand{\csk}{\operatorname{CSK}}
\newcommand{\LD}{\operatorname{LD}}
\newcommand{\ld}{\operatorname{ld}}
\newcommand{\IE}{\operatorname{IE}}
\newcommand{\IEd}{\operatorname{IE}_{<d}}
\newcommand{\mgsimple}{\mg^{\setminus \LD}}
\newcommand{\mgpos}{\mg^{>0}}
\newcommand{\sgpos}{\sg^{>0}}
\newcommand{\mcorepos}{\mcore^{>0}}
\newcommand{\poscore}{\core^{>0}}
\newcommand{\pospatch}{\patch^{>0}}
\newcommand{\supp}{\operatorname{Supp}}
\newcommand{\vk}{\vect{k}}
\newcommand{\vn}{\vect{n}}
\newcommand{\vm}{\vect{m}}
\newcommand{\vr}{\vect{r}}
\newcommand{\vx}{\vect{x}}
\newcommand{\vy}{\vect{y}}
\newcommand{\vz}{\vect{z}}
\newcommand{\vzero}{\vect{0}}
\newcommand{\vone}{\vect{1}}
\newcommand{\vlambda}{\vect{\lambda}}
\newcommand{\vtheta}{\vect{\theta}}
\newcommand{\vzeta}{\vect{\zeta}}
\author{\'Elie de Panafieu\thanks{Email: depanafieuelie[at]gmail.com. This work was partially founded by 
Sorbonne Universit\'es, UPMC Univ Paris 06, CNRS, LIP6 UMR 7606, France;
the Austrian Science Fund (FWF) grant F5004;
the Amadeus program; the PEPS HYDrATA; and the LINCS \texttt{www.lincs.fr}.}\\
{\normalsize Bell Labs France, Nokia}}
\title{Analytic combinatorics of connected graphs\footnote{A short version of this work has been published by \cite{EdP16}.}}
\begin{document}
\maketitle

\begin{abstract}
We enumerate the connected graphs that contain a number of edges growing linearly with respect to the number of vertices. So far, only the first term of the asymptotics and a bound on the error were known. Using analytic combinatorics, \ie generating function manipulations, we derive a formula for the coefficients of the complete asymptotic expansion.
The same result is derived for connected multigraphs.

\noindent \textbf{keywords.} connected graphs, analytic combinatorics, generating functions, asymptotic expansion
\end{abstract}

\section{Introduction}
\label{sec:introduction}
This article analyzes the asymptotics of the number $\csg_{n,k}$ of connected graphs 
with $n$ vertices and $n+k$ edges.
Following the definition of \cite{JKLP93}, the quantity $k$,
equal to the difference between the numbers of edges and vertices, 
is called the \emph{excess} of the graph.

    \subsection{Related works}

The enumeration of connected graphs according 
to their number of vertices and edges has a long history.
We have chosen to present it not chronologically,
but from the sparsest to the densest graphs,
\ie according to the speed growth of the excess
with respect to the number of vertices.

Trees are the simplest connected graphs,
and reach the minimal excess $-1$.
They were enumerated in 1860 by Borchardt,
and his result, known as \emph{Cayley's formula},
is $\csg_{n,-1} = n^{n-2}$.
\cite{R59} then derived the formula
for $\csg_{n,0}$, which corresponds to connected graphs 
that contain exactly one cycle,
and are called \emph{unicycles}.
Proofs of those two last results, based on analytic combinatorics,
are available in \cite{FS09}.
\cite{W80} applied generating function techniques
and a combinatorial argument based on \emph{$3$-cores}, or \emph{kernels},
to derive the asymptotics of connected graphs 
when $k$ is a constant, or is slowly going to infinity (k = $\smallo(n^{1/3})$).
%
\cite{FSS04} derived a complete asymptotic expansion for connected graphs with fixed excess,
following a purely analytic approach, discussed in Section~\ref{sec:outline}.

\cite{L90} obtained the asymptotics of $\csg_{n,k}$
when $k$ goes to infinity while $k = \smallo(n)$.
\cite{BCM90} derived the asymptotics for all $k$.
Their proof was based on the differential equations obtained by Wright,
involving the generating functions of connected graphs indexed by their excesses.
%
Since then, two simpler proofs were proposed.
The proof of \cite{HS06} used probabilistic methods, 
analyzing a breadth-first search on a random graph.
The proof of \cite{PW05} relied on the enumeration 
of graphs with minimum degree at least $2$.
The present work follows the same global approach.
The main difference is that, contrary to Pittel and Wormald
who worked at the level of the sequences enumerating graph families,
we use the powerful setting of generating functions to represent those families.
This enables us to shorten the proofs, and to derive more terms in the asymptotics.

\cite{ER60} proved that almost all graphs are connected
when $2k/n - \log(n)$ tends to infinity.
As a corollary, the asymptotics of connected graphs
with those parameters is equivalent to 
the total number of graphs $\binom{n(n-1)/2}{n+k}$.

    \subsection{Motivations and contributions}

Our main result is Theorem~\ref{th:csg_asymptotics},
which provides a complete asymptotic expansion
for the number of connected graphs with a number of edges
growing linearly with the number of vertices,
of the form
\[
    \csg_{n,k} = D_{n,k} \left( c_0 + c_1 k^{-1} + \cdots + c_{d-1} k^{-(d-1)} + \bigO(k^{-d}) \right).
\]
Expressions are provided for $D_{n,k}$ and $c_0$ in this theorem.
We explain how to compute the other $(c_r)$ coefficients in Appendix~\ref{sec:computation},
and provide there the expression of $c_1$.
We thank an anonymous referee for providing us with
large tables of numbers of connected graphs.
Part of them are presented in Figure~\ref{fig:numerical_verification}.
The correct digits obtained by the asymptotic expansion
of order $1$ or $2$ are highlighted.

After three proofs of the asymptotics of connected graphs
when the excess grows linearly with the number of vertices,
what is the point of deriving yet another one?
A first reason is that each proof introduces new techniques,
which can then be applied to investigate other graph families.
In our case, those techniques are the following.
\begin{itemize}
\item
It was already observed by \cite{FKP89} and \cite{JKLP93}
that multigraphs (loops and double edges allowed)
are better suited for generating function manipulations than simple graphs.
In Section~\ref{sec:def_multigraphs},
we improve their model to make it more compatible
with the formalism of the symbolic method (\cite{FS09})
and of species theory (\cite{BLL97}).
\item
The generating functions of graphs with degree constraints
were recently computed by \cite{EdPR16},
and we apply and improve this result
to enumerate graphs and multigraphs with minimum degree at least $2$.
\item
We apply an inclusion-exclusion technique to remove 
loops and double edges from multigraphs, turning them into graphs.
\cite{EdPCGGR17} have recently extended this new approach
to enumerate graphs with forbidden subgraphs,
and to count the occurrences of subgraphs from a given family in random graphs.
\item
New exact expressions for the generating functions
of interesting families of (multi)graphs are derived, 
including multigraphs with a given excess and degree constraints
(Proposition~\ref{th:deg_constraints}),
and graphs and multigraphs without trees and unicycles
(Propositions~\ref{th:mgpos} and~\ref{th:sgpos}).
\end{itemize}
Two other interesting techniques are applied:
a multivariate saddle-point method, Theorem~\ref{th:large_powers},
strongly influenced by the results of \cite{PW13},
and a divergent series analysis, Lemma~\ref{th:bender_like},
borrowed and slightly modified from \cite{Bo16}
(who was influenced by the work of \cite{Be75}).
Those tools are respectively developed in Appendix~\ref{sec:saddle_point} and~\ref{sec:divergent_series}.

A second motivation is the analysis
of the typical structure of random graphs.
\cite{ER60} started this study,
following a probabilistic approach.
One of their most striking result is
that a typical random graph
with $n$ vertices and $m = \exactbigO(n)$ edges
\begin{itemize}
\item contains only trees and unicycles if $\lim_{n \to + \infty} m/n < 1/2$,
\item contains only trees, unicycles, and a unique \emph{giant component}
if $\lim_{n \to + \infty} m/n > 1/2$.
\end{itemize}
In the first case, the graph is said to be \emph{sub-critical},
and \emph{super-critical} in the second case.
Precise results were derived by \cite{JKLP93} in the \emph{critical case},
which corresponds to $m/n = 1/2 + \bigO(n^{-1/3})$.
They proved that those graphs contain only
components of bounded excess, with high probability.
This work, based on analytic combinatorics,
used the expressions of the generating functions
of connected graphs with a fixed excess
obtained by \cite{W80}.
However, the excess of the giant component typically
grows linearly with its number of vertices,
and the generating function of such component was not known
in a form allowing asymptotic analysis
(this point is also discussed in Section~\ref{sec:outline}).
Thus, the structure of super-critical random graphs
has not been yet investigated using analytic combinatorics.
One of the contributions of the present paper is the derivation
of such a generating function (Theorem~\ref{th:csg_exact}),
and the tools to analyze it.
In a future contribution, we plan to extend the present work and derive precise results
on the structure of super-critical random graphs.

A third motivation for the derivation
of a new proof for the asymptotics of connected graphs
is that each proof might be extended
to various generalizations of the classical graph model.
We are currently working on the structure
of random graphs with degree constraints
(to extend the work of \cite{EdPR16}),
of non-uniform hypergraphs (\cite{EdPhp15}),
and of inhomogeneous graphs (\cite{EdP15, PR14}).

Finally, our result is more precise than the previous ones:
we derive an asymptotic expansion,
\ie a potentially infinite number of error terms.
This is characteristic of the analytic combinatorics approach,
where the generating functions capture all the combinatorial information,
and loss occurs only at the asymptotic extraction.
However, it should be noted that the formula
for the coefficients of this asymptotic expansion
is rather long (see Appendix~\ref{sec:computation}).

The proofs of this paper are based on analytic combinatorics,
which classically follows two steps.
First, the combinatorial structures of the families of graphs we are interested in
are translated into generating function relations.
This is achieved applying tools developed
by species theory (\cite{BLL97}) and the symbolic method (\cite{FS09}).
A short introduction to those tools is provided by Section~\ref{sec:analytic_combinatorics}.
Then the asymptotic expansions of the cardinality of those families are extracted.
%
We chose to work more on the combinatorial part,
deriving the generating functions in a ``nice'' form,
so that the asymptotic extractions are achieved using ``black box theorems''
(Lemma~\ref{th:bender_like}, closely related to the results of \cite{Be75} and \cite{Bo16},
and Lemma~\ref{th:mgpos_asymptotics}, a corollary of the work of \cite{PW13}).

    \subsection{Structure of the article}

The graph and multigraph models are presented in Section~\ref{sec:models},
as well as an outline of the forthcoming proof.
Section~\ref{sec:cmg} focuses on multigraphs.
The main result is Theorem~\ref{th:cmg_asymptotics},
where the asymptotic expansion of the number of connected multigraphs
with $n$ vertices and excess $k$, proportional to $n$, is computed.
The corresponding result for simple graphs is derived
in Theorem~\ref{th:csg_asymptotics}, from Section~\ref{sec:csg}.
The classical results on the asymptotics of connected graphs and multigraphs with fixed excess
are recalled in Section~\ref{sec:fixed_k}.
This article relies on two technical tools.
The first one is the multivariate saddle-point method,
presented in Appendix~\ref{sec:saddle_point}.
The second one concerns the asymptotic analysis
of the coefficients of divergent series,
available in Appendix~\ref{sec:divergent_series}.
The main result of this article is the asymptotic expansion of connected graphs
with an excess growing linearly with the number of vertices.
Instructions for the computation of the coefficients of this expansion
are provided in Appendix~\ref{sec:computation}.
As an illustration, the first two coefficients are computed.

    \subsection{Analytic combinatorics} \label{sec:analytic_combinatorics}

To make this paper more self-contained,
we present a brief introduction to the \emph{symbolic method} of analytic combinatorics, without proofs.
\cite{FS09} provide a more rigorous and complete presentation.
The reader already familiar with those notions can skip this section.

A \emph{labeled object} $f$ is a graph-like object
where the nodes are labeled with distinct consecutive integers starting at $1$.
The number of nodes, also equal to the largest label,
is always assumed to be finite, and is the \emph{size} $|f|$ of $f$.
For example, a rooted labeled tree is a labeled object.
It would be convenient that a set, or a sequence,
or any structured collection of labeled objects
would itself be a labeled object.
That way, we could for example investigate pairs or sets of rooted trees.
However, this is not the case, because such a collection
contains, in general, several nodes wearing the same label,
which is forbidden by the definition.
To solve this problem, the notion of \emph{relabeling} is introduced.
It looks technical at first, but is in fact natural and easy to apply.
A relabeling of a sequence $f = (f_1, \ldots, f_t)$ of labeled objects
is a labeled object $g = (g_1, \ldots, g_t)$,
such that for all $1 \leq i \leq t$,
$g_i$ is equal to $f_i$ up to an increasing relabeling of its nodes.
Hence, there is a strictly increasing function $\alpha$
that sends the labels of $f$ on the labels of $g$.
This implies, in particular, that the nodes of the $g_i$ have distinct labels,
and that
\[
    |(g_1, \ldots, g_t)| = |g_1| + \cdots + |g_t|.
\]
An example is provided in Figure~\ref{fig:relabeling}.

\begin{figure}
\begin{center}
\includegraphics[scale=0.7]{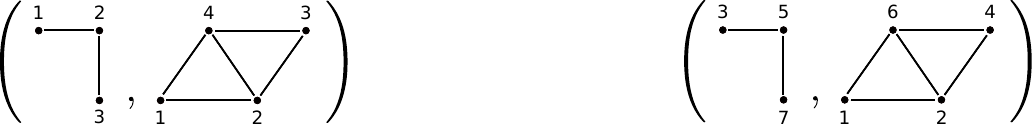}
\caption{Left, a pair of labeled objects.
This pair is not a labeled object itself,
because the same label appears on several nodes.
Right, one of the $\binom{7}{3}$ relabeling of the left pair.
Observe that the relative order of the labels of each object is preserved.
The right pair is a labeled object.}
\label{fig:relabeling}
\end{center}
\end{figure}

A \emph{labeled combinatorial family} $\mF$ is a collection of labeled objects,
such that for any $n \geq 0$, the number $F_n$ of objects of size $n$ in $\mF$ is finite.
The first principle of analytic combinatorics
is to associate to $\mF$ a \emph{generating function},
which is the formal sum
\[
    F(z) = \sum_{f \in \mF} \frac{z^{|f|}}{|f|!} = \sum_{n \geq 0} F_n \frac{z^n}{n!},
\]
where the third expression has been obtained from the second by grouping the terms of same size $n$.
For example, denoting by $T(z)$ the generating function of rooted trees,
and $T_n$ the number of rooted trees on $n$ vertices, we have
\[
    T(z) = \sum_{n \geq 0} T_n \frac{z^n}{n!},
    \quad \text{ thus } \quad
    T_n = n! [z^n] T(z).
\]

The second principle of analytic combinatorics
is to translate the combinatorial structure known on the family $\mF$
into equations that characterize its generating function $F(z)$.
Information on the coefficients $(F_n)$,
such as exact expressions and asymptotics,
are then extracted from those equations.
The translation is achieved by application of a \emph{dictionary},
regrouping some classical combinatorial operations.
Consider two labeled combinatorial families $\mA$ and $\mB$
with generating functions $A(z)$ and $B(z)$.
\begin{itemize}
\item \textbf{Disjoint union.}
If $\mA$ and $\mB$ do not intersect, then the disjoint union
$\mC = \mA \uplus \mB$ has generating function
\[
    C(z) = A(z) + B(z).
\]
\item \textbf{Relabeled Cartesian product.}
The set of all relabeled pairs of objects $(a,b)$ with $a \in \mA$ and $b \in \mB$
is denoted by $\mC = \mA \times \mB$, and has generating function
\[
    C(z) = A(z) B(z).
\]
\item \textbf{Sequence.}
Let us assume that $\mA$ does not contain any empty object (\ie of size $0$).
The combinatorial family that contains
the relabeled sequences of an arbitrary number
of objects from $\mA$ is denoted by
\[
    \mC = \operatorname{Seq}(\mA) = \biguplus_{n \geq 0} \mA^n.
\]
Its generating function is
\[
    C(z) = \frac{1}{1-A(z)} = \sum_{n \geq 0} A(z)^n.
\]
\item \textbf{Set.}
Assuming again that $\mA$ does not contain any empty object,
the combinatorial family of relabeled sets of objects from $\mA$,
denoted by $\mC = \operatorname{Set}(\mA)$, has generating function
\[
    C(z) = e^{A(z)} = \sum_{n \geq 0} \frac{A(z)^n}{n!}.
\]
\item \textbf{Marking a node.}
The family of objects from $\mA$ where one node is distinguished
is denoted by $\mC = \mA'$, and has generating function
\[
    C(z) = z A'(z).
\]
\item \textbf{Composition}
The standard way to compose an object $a \in \mA$
with a sequence $(b_1, \ldots, b_{|a|})$ of $|a|$ objects from $\mB$
is to replace each node $i$ of $a$ with the object $b_i$,
and to relabel $(b_1, \ldots, b_{|a|})$ into $(c_1, \ldots, c_{|a|})$ in a way
that ensures that for all $i$,
the smallest label of $c_i$ is smaller than the smallest label of $c_{i+1}$.
The generating function of all compositions
of objects from $\mA$ by sequences of objects from $\mB$
is denoted by $\mC = \mA(\mB)$, and has generating function
\[
    C(z) = A(B(z)).
\]
\end{itemize}
The same operations can be naturally extended
to the multivariate case,
for example for multigraphs,
which are counted according both to their number of vertices and edges.
Those operations are illustrated in the following classical lemma,
which provides information on the generating function of rooted trees.

\begin{lemma} \label{th:cayley}
The generating function $T(z)$ of labeled rooted trees 
satisfies the two relations
\[
    T(z) = z e^{T(z)},
    \quad \text{ and } \quad
    z T'(z) = \frac{T(z)}{1-T(z)}.
\]
Its radius of convergence is $1/e$.
On its disk of convergence $|z| \leq 1/e$,
the maximum of $|T(z)|$ is $T(1/e) = 1$.
\end{lemma}

\begin{proof}
We sketch the proof, as a more complete version is provided
by Propositions~II.5 and~IV.5 from \cite{FS09}.
A rooted tree can be decomposed as a vertex,
the root, which has generating function $z$,
and a set of rooted trees, its children,
and this set has generating function $e^{T(z)}$.
Applying the symbolic method, this combinatorial description
translates into the generating function relation
\begin{equation} \label{eq:cayley}
    T(z) = z e^{T(z)}.
\end{equation}
The set of rooted trees where one vertex is marked
has generating function $z T'(z)$.
Any such tree $t$ can be uniquely decomposed
as a nonempty sequence of rooted trees,
which roots are on a path from the root of $t$
to the marked vertex.
The generating function of such a nonempty sequence is $\frac{z}{1-z}$,
so the generating function of a nonempty sequence
of rooted trees is $\frac{T(z)}{1-T(z)}$.
Thus, we have proven
\[
    z T'(z) = \frac{T(z)}{1-T(z)}.
\]
This can also be established by derivation of Equation~\eqref{eq:cayley}.
According to Cayley's Formula,
there are $n^{n-1}$ rooted tree with $n$ vertices, so
\[
    T(z) = \sum_{n \geq 1} n^{n-1} \frac{z^n}{n!}.
\]
This formula can be proven
using Lagrange inversion \cite[Proposition~I.5]{FS09} on Equation~\eqref{eq:cayley}.
Application of D'Alembert's criterion then provides
the radius of convergence $1/e$, because
\[
    \lim_{n \to +\infty}
    \frac{(n+1)^n}{(n+1)!}
    \frac{n!}{n^{n-1}}
    =
    \lim_{n \to +\infty}
    \left(1 + \frac{1}{n} \right)^n
    =
    e.
\]
Since $T(z)$ has nonnegative coefficients,
its absolute value reaches its maximum on any disk $|z| \leq \zeta$
where it is defined at the real positive value $\zeta$.
At $1/e$, the limit value of $T(z)$ satisfies Equation~\eqref{eq:cayley}
\[
    T(1/e) = (1/e) e^{T(1/e)},
\]
which implies $T(1/e) = 1$.
\end{proof}

\section{Models and outline of the method}
\label{sec:models}
We introduce the notations used throughout the article,
the classical graph model, and a multigraph model,
better suited for generating function manipulations.
The link between those two models is established in Lemma~\ref{th:multigraphs_to_graphs}.
Finally, we sketch the main steps for deriving
the asymptotic expansion of connected graphs
with $n$ vertices and excess $k$ growing linearly with $n$.

    \subsection{Notations}

A \emph{multiset} is an unordered collection of objects,
where repetitions are allowed.
\emph{Sets}, or \emph{families}, are then multisets without repetitions.
A \emph{sequence}, or \emph{tuple}, is an ordered multiset.
We use the parenthesis notation $(u_1, \ldots, u_n)$ for sequences, 
and the brace notation $\{u_1, \ldots, u_n\}$ for sets and multisets.
The cardinality of a set or multiset $S$ is denoted by $|S|$.
The double factorial notation for odd numbers stands for
\[
  (2k-1)!! = \frac{(2k)!}{2^k k!}.
\]
Given two positive values $\zeta$, $\lambda$,
the closed torus of radii $(\zeta, \lambda)$
denotes the set of pairs of complex numbers
\[
    \{(z,x) \in \complex^2\ |\ |z| \leq \zeta,\ |x| \leq \lambda\}.
\]
The $n$th coefficient in the Taylor expansion of $A(z)$ at $z=0$
is denoted by $[z^n] A(z)$, so that
\[
    [z^n] \sum_{m \geq 0} a_m z^m = a_n.
\]
The derivative of the function $f$ with respect to the variable $x$ is denoted by $\partial_x f(x)$,
or by $f'(x)$ when there is no ambiguity about the variable.
Most of the series we will manipulate have nonnegative coefficients.
The following simple classical lemma
provides a bound on the coefficients of such series.

\begin{lemma} \label{th:saddle_point_bound}
Consider a series $A(z)$ with nonnegative coefficients
and a positive radius of convergence $r$,
a positive value $\zeta < r$,
and a nonnegative integer $n$,
then the $n$th coefficient of $A(z)$ is bounded by
\[
    [z^n] A(z) \leq \frac{A(\zeta)}{\zeta^n}.
\]
\end{lemma}

\begin{proof}
Since the coefficients of $A(z)$ are nonnegative
and $\zeta$ is positive, we have
\[
    A(\zeta)
    =
    \sum_{m \geq 0} [z^m] A(z) \zeta^m
    \geq
    [z^n] A(z) \zeta^n.
\]
The result is obtained after dividing by $\zeta^n$.
\end{proof}

The value of $\zeta$ that provides the best bound
is the one that minimizes $A(\zeta) / \zeta^n$,
and this point is called the \emph{saddle-point}.
More details are available in \cite{FS09}.

    \subsection{Graph model} \label{sec:def_graphs}

We consider in this article the classical model of graphs, 
\aka \emph{simple graphs},
with labeled vertices and unlabeled unoriented edges.
All edges are distinct and no edge links a vertex to itself.
As always in analytic combinatorics and species theory,
the labels are distinct elements that belong to a totally ordered set.
When counting labeled objects (here, graphs), we always assume that 
the labels are consecutive integers starting at $1$.
Another formulation is that we consider two objects as equivalent 
if there exists an increasing relabeling sending one to the other.
We naturally adopt for graphs generating functions
exponential with respect to the number of vertices,
and ordinary with respect to the number of edges
(see \cite{FS09}, or \cite{BLL97}).

\begin{definition} \label{def:graphs}
A graph $G$ is a pair $(V(G), E(G))$,
where $V(G)$ is the labeled set of vertices,
and $E(G)$ is the set of edges.
Each edge is a set of two vertices from $V(G)$.
The number of vertices (resp.~of edges) is $n(G) = |V(G)|$
(resp. $m(G) = |E(G)|$).
The \emph{excess} $k(G)$ is defined as $m(G) - n(G)$.
The number of graphs with $n$ vertices and excess $k$
(hence with $n+k$ edges)
in a graph family $\mF$ is denoted by $F_{n,k}$.
The generating function of $\mF$ is
\[
    F(z,w)
    =
    \sum_{n,m} F_{n,m-n} w^m \frac{z^n}{n!}
    =
    \sum_{G \in \mF}
    w^{m(G)}
    \frac{z^{n(G)}}{n(G)!}.
\]
\end{definition}

A graph is said to be \emph{positive}
if all its components have a positive excess,
\ie are neither trees nor unicycles.
The set of positive graphs from a family $\mF$
is denoted by $\mF^{>0}$.

        \subsection{Multigraph model} \label{sec:def_multigraphs}

As already observed by \cite{FKP89,JKLP93},
\emph{multigraphs} are better suited for generating function manipulations than graphs.
We use the model of \cite{EdPCGGR17}, 
distinct but related to the one used by \cite{FKP89, JKLP93},
and recall the link between the generating functions of graphs 
and multigraphs in Lemma~\ref{th:multigraphs_to_graphs}.


The difference between graphs and multigraphs
is that multigraphs have labeled and oriented edges,
and are permitted loops and multiple edges.
Since vertices and edges are labeled, 
we choose exponential generating functions with respect to both quantities.
Furthermore, a weight $1/2$ is assigned to each edge,
for a reason that will become clear in Lemma~\ref{th:multigraphs_to_graphs}.

\begin{definition} \label{def:multigraphs}
A multigraph $G$ is a pair $(V(G), E(G))$,
where $V(G)$ is the set of labeled vertices,
and $E(G)$ is the set of labeled edges 
(the edge labels are independent from the vertex labels).
Each edge is a triplet $(v,w,e)$,
where $v$, $w$ are vertices,
and $e$ is the label of the edge.
The number of vertices (resp.~number of edges, excess) is $n(G) = |V(G)|$
(resp. $m(G) = |E(G)|$, $k(G) = m(G) - n(G)$).
The set of all multigraphs is denoted by $\mg$.
The number of multigraphs with $n$ vertices and excess $k$
in a multigraph family $\mF$ is denoted by $F_{n,k}$.
The generating function of $\mF$ is
\[
    F(z,w)
    =
    \sum_{n,m} F_{n,m-n} \frac{w^m}{2^m}{m!} \frac{z^n}{n!}
    =
    \sum_{G \in \mF}
    \frac{w^{m(G)}}{2^{m(G)} m(G)!}
    \frac{z^{n(G)}}{n(G)!}.
\]
\end{definition}

In the following, it will always be clear from the context
whether $\mF$ is a graph family or a multigraph family,
and thus whether $F(z,w)$ is defined using the convention
of Definitions~\ref{def:graphs} or~\ref{def:multigraphs}.
As a consequence of the definition,
the generating function of an edge is $w/2$,
while $w$ is the generating function of an edge
that can be oriented in both directions.
%
The generating function of all multigraphs is
\[
    \mg(z,w) = \sum_{n \geq 0} e^{w n^2/2} \frac{z^n}{n!},
\]
because a multigraph on $n$ vertices
is a set of labeled edges,
each chosen among a set of $n^2$ possibilities.
Definition~\ref{def:loops_double_edges} and Figure~\ref{fig:multigraphs_to_graphs}
present examples of multigraphs.

A multigraph is said to be \emph{positive}
if all its components have a positive excess,
\ie are neither trees nor unicycles.
The set of positive multigraphs from a family $\mF$
is denoted by $\mF^{>0}$.

    \paragraph{Difference with the previous model.}

\cite{FKP89} and \cite{JKLP93} defined multigraphs as
graphs where loops and multiple edges are allowed
(\ie labeled vertices, but unlabeled unoriented edges).
They counted multigraphs with a weight, the \emph{compensation factor},
and called \emph{number of multigraphs in the family $\mF$}
the sum of those weights (although it needed not be an integer).
Specifically, given a multigraph $G$ with $m$ edges,
its weight was defined as the number of different ways
to orient and label its edges, divided by $2^m m!$.

This setting leads to the same generating functions as us.
However, our definition brings two improvements.
First, those artificial weights are avoided.
Secondly, more combinatorial operations
translate into generating function relations
in the exponential setting than in the ordinary one.

    \paragraph{Link between graphs and multigraphs.}

A major difference between graphs and multigraphs
is the possibility of loops and multiple edges.

\begin{definition} \label{def:loops_double_edges}
A \emph{loop} (resp.\ \emph{double edge}) of a multigraph $G$ is a subgraph $(V, E)$ 
(\ie $V \subset V(G)$ and $E \subset E(G)$)
isomorphic to the following left multigraph (resp.\ to one of the following right multigraphs).
\begin{center}
\includegraphics[scale=1.]{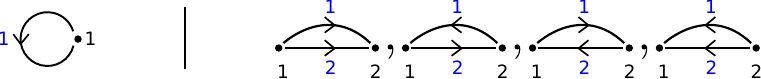}
\end{center}
The set of loops and double edges of a multigraph $G$ is denoted by $\LD(G)$,
and its cardinality by $\ld(G)$.
\end{definition}

In particular, a multigraph that has no double edge contains no multiple edge.
Multigraphs are better suited for generating function manipulations than graphs.
However, we aim at deriving results on the graph model,
since it has been adopted both by the graph theory and the combinatorics communities.
The following lemma, illustrated in Figure~\ref{fig:multigraphs_to_graphs}, 
links the generating functions of both models.

\begin{lemma} \label{th:multigraphs_to_graphs}
Let $\mgsimple$ denote the family of multigraphs
that contain neither loops nor double edges,
and $p$ the projection 
from $\mgsimple$ to the set $\sg$ of graphs,
that erases the edge labels and orientations,
as illustrated in Figure~\ref{fig:multigraphs_to_graphs}.
Let $\mF$ denote a subfamily of $\mgsimple$,
stable by edge relabeling and change of orientations.
Then there exists a family $\mH$ of graphs such that $p^{-1}(\mH) = \mF$.
Furthermore, the generating functions of $\mF$ and $\mH$,
with the respective conventions of multigraphs and graphs,
are equal
\[
    \sum_{G \in \mF}
    \frac{w^{m(G)}}{2^{m(G)} m(G)!}
    \frac{z^{n(G)}}{n(G)!}
    =
    \sum_{G \in \mH}
    w^{m(G)}
    \frac{z^{n(G)}}{n(G)!}.
\]
\end{lemma}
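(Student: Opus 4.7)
The plan is to define $\mH$ explicitly as the image $p(\mF)$, verify the fiber condition $p^{-1}(\mH) = \mF$, and then perform the generating-function computation by partitioning $\mF$ according to the fibers of $p$. The key combinatorial input is a count of the fiber $|p^{-1}(G)|$ for any graph $G \in \mH$, which requires the hypotheses that multigraphs in $\mgsimple$ have no loops and no double edges.

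First, I would set $\mH = p(\mF) \subseteq \sg$. The inclusion $\mF \subseteq p^{-1}(\mH)$ is tautological. For the reverse, take $G' \in p^{-1}(\mH)$: then $p(G') = p(G'')$ for some $G'' \in \mF$, meaning $G'$ and $G''$ differ only by an edge relabelling and a choice of orientations on each edge. By the assumed stability of $\mF$ under both operations, $G' \in \mF$, so $p^{-1}(\mH) = \mF$.

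Next I would count $|p^{-1}(G)|$ for a fixed $G \in \mH$. By the labelling convention (labels are consecutive integers starting at $1$), specifying a lift of $G$ amounts to choosing an ordering of its $m = m(G)$ edges (to assign the labels $1, \dots, m$) together with, for each edge, a choice of orientation. Since $G$ has no loops, each edge has two genuinely distinct orientations, and since $G$ has no multiple edges, distinct labellings of the edges produce distinct triplets $(v,w,e)$. Hence $|p^{-1}(G)| = 2^{m} m!$; this is where the absence of loops and double edges enters crucially.

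Finally, I would partition the sum over $\mF$ by fibers of $p$: since $n(G') = n(p(G'))$ and $m(G') = m(p(G'))$ for any $G' \in \mF$, the summand $\frac{w^{m(G')}}{2^{m(G')} m(G')!} \frac{z^{n(G')}}{n(G')!}$ depends only on $p(G')$, so
\[
\sum_{G' \in \mF} \frac{w^{m(G')}}{2^{m(G')} m(G')!} \frac{z^{n(G')}}{n(G')!}
= \sum_{G \in \mH} |p^{-1}(G)|\, \frac{w^{m(G)}}{2^{m(G)} m(G)!} \frac{z^{n(G)}}{n(G)!},
\]
and substituting $|p^{-1}(G)| = 2^{m(G)} m(G)!$ collapses the prefactor and yields the graph-convention generating function. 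The only real obstacle is making the fiber count watertight, in particular checking that the no-loops and no-multiple-edges hypotheses are exactly what is needed so that the factor $2^m m!$ appears without corrections (loops would contribute only $1$ orientation each, and multiple edges would force an overcount, breaking the clean cancellation).
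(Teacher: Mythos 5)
Your proof is correct, and the paper in fact gives no formal proof of this lemma at all—it only illustrates the statement by example in Figure~\ref{fig:multigraphs_to_graphs}, where a simple graph with $m=2$ edges is shown to have $8 = 2^2\,2!$ preimages under $p$. Your argument is precisely the formalization of that picture: set $\mH = p(\mF)$, use the stability hypotheses to verify $p^{-1}(\mH)=\mF$, observe that every fiber of $p$ over a simple graph with $m$ edges has exactly $2^m m!$ elements (one factor $m!$ for the edge labelling, one factor $2$ per edge for the orientation, both counts being exact precisely because loops and multiple edges are excluded), and sum over fibers to cancel the $2^{m}m!$ prefactor. One small remark on phrasing: you write that the no-multiple-edges hypothesis guarantees that ``distinct labellings of the edges produce distinct triplets,'' but distinct labels automatically yield distinct triplets since the label is a coordinate of the triplet; what the hypothesis actually buys is that distinct (labelling, orientation) choices yield distinct \emph{multigraphs}, i.e.\ distinct \emph{sets} of triplets, because the unordered endpoint pair then determines which edge of $G$ each triplet lifts—otherwise, swapping labels between two parallel edges would produce the same edge set. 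Your closing parenthetical shows you understand this; the sentence just states the wrong consequence.
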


\begin{proof}
Consider a graph $G$ from $\mH$ that contains $m$ edges.
The edges of $G$ can be labeled
and oriented in $2^{m} m!$ different ways,
and $\mF$ is stable by edge relabeling and change of orientation,
so the set $p^{-1}(\{G\})$ of multigraphs from $\mF$
sent by $p$ to $G$ has cardinality $2^{m} m!$.
In the multigraph generating function of $\mF$,
let us group the multigraphs sent by $p$ to the same graph.
Each group corresponding to a graph with $m$ edges
has cardinality $2^m m!$, so
\[
    \sum_{G \in \mF}
    \frac{w^{m(G)}}{2^{m(G)} m(G)!}
    \frac{z^{n(G)}}{n(G)!}
    =
    \sum_{G \in \mH}
    2^{m(G)} m(G)!
    \frac{w^{m(G)}}{2^{m(G)} m(G)!}
    \frac{z^{n(G)}}{n(G)!}
    =
    \sum_{G \in \mH}
    w^{m(G)}
    \frac{z^{n(G)}}{n(G)!}.
\]
\end{proof}

\begin{figure}[htbp]
  \begin{center}
    \includegraphics[scale=0.9]{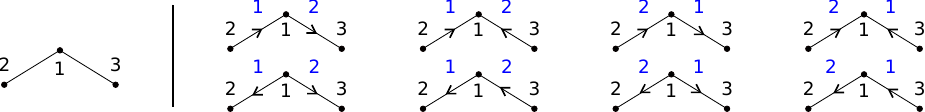}
    \caption{A graph $G$ and the set $\mF$ of multigraphs 
      sent by $p$ (defined in Lemma~\ref{th:multigraphs_to_graphs}) to the graph $G$.
      The generating function of $\{G\}$ is $w^2 \frac{z^3}{3!}$, the generating function of $\mF$ is $8 \frac{w^2}{2^2 2!} \frac{z^3}{3!}$.
      As stated by Lemma~\ref{th:multigraphs_to_graphs}, those generating functions are equal.}
    \label{fig:multigraphs_to_graphs}
  \end{center}
\end{figure}

In some graph families,
the number of edges of a graph
depends only of the number of vertices.
This is the case for trees and cycles,
since a tree with $n$ vertices contains $n-1$ edges,
and a cycle with $n$ vertices contains $n$ edges.
This is more generally true for the graphs of excess $k$
in a graph family $\mF$:
each such graph with $n$ vertices contains $n+k$ edges.
In such cases, we use univariate generating functions for the families,
simply replacing the variable $w$ with $1$
\[
    F(z) = F(z,1).
\]
We use the same convention for multigraph families of fixed excess.

\begin{lemma} \label{th:fixed_excess}
Consider a graph or multigraph family $\mF$
with generating function $F(z,w)$.
The generating function $F_k(z)$
of the graphs (or multigraphs) from $\mF$
of excess $k$ is equal to
\[
    F_k(z) = [y^k] F(z/y,y),
\]
and the generating function of $\mF$
is expressed using $F_k(z)$ as
\[
    F(z,w) = \sum_{k \in \integers} F_k(zw) w^k.
\]
Depending on whether $\mF$ is a graph or multigraph family,
the number of graphs (\resp multigraphs) in $\mF$
with $n$ vertices and excess $k$ is equal to
\[
    F_{n,k} = n! [z^n] F_k(z)
    \quad \text{ or to } \quad
    F_{n,k} = n! 2^{n+k} (n+k)! [z^n] F_k(z).
\]
\end{lemma}

\begin{proof}
We present the proof for a multigraph family $\mF$.
The proof for simple graphs is identical.
According to Definition~\ref{def:multigraphs},
the generating function of $F(z/y,y)$ is equal to
\[
    F(z/y,y) =
    \sum_{G \in \mF}
    \frac{y^{m(G)-n(G)}}{2^{m(G)} m(G)!}
    \frac{z^{n(G)}}{n(G)!}.
\]
Extracting the coefficient $[y^k]$
is thus equivalent to restricting the domain of summation
to the multigraphs of excess $k$ in $\mF$, so
\[
    F_k(z) = [y^k] F(z/y,y).
\]
This relation implies
\[
    F_k(z w) w^k = [y^k] F(z/y, w y).
\]
Formally summing over $k$, we obtain
\[
    \sum_{k \in \integers} F_k(z w) w^k = F(z, w).
\]
Finally, by Definition~\ref{def:multigraphs},
the number of multigraphs with $n$ vertices and excess $k$ in $\mF$ is
\[
    F_{n,k} = n! 2^{n+k} (n+k)! [z^n w^{n+k}] F(z,w),
\]
which is equal to $n! 2^{n+k} (n+k)! [z^n] F_k(z)$
\end{proof}

        \subsection{Outline of the method} \label{sec:outline}

A proof can be represented as a pyramid of statements,
with the main result standing at the top.
In this section, we present the main steps of our proof from top to bottom,
motivating the introduction of the combinatorial objects one by one.
The actual proof is presented the other way around.

    \paragraph{A first exact expression for the number of connected graphs.}

With the conventions of Definition~\ref{def:graphs},
the generating function of all graphs is
\[
  \sg(z,w) = 
  \sum_{n \geq 0}
  (1+w)^{\binom{n}{2}}
  \frac{z^n}{n!},
\]
because a graph with $n$ vertices
has $\binom{n}{2}$ possible edges,
each of which is either absent or present in the graph
(hence there are $\binom{n (n-1) / 2}{m}$ graphs
with $n$ vertices and $m$ edges).
Since a graph is a set of connected graphs,
the generating function of connected graphs $\csg(z,w)$
is characterized by the relation
\[
  \sg(z,w) = e^{\csg(z,w)}.
\]
Taking the logarithm, we obtain the classical closed form
for the generating function of connected graphs
\[
    \csg(z,w) =
    \log \bigg(
    \sum_{n \geq 0}
    (1+w)^{\binom{n}{2}}
    \frac{z^n}{n!}
    \bigg).
\]
Observe that the argument of the logarithm
is a series with a zero radius of convergence.
Therefore, we cannot use any analytic property of the logarithm,
and the only way to treat this expression seems to be
to expand it as a series
\begin{equation} \label{eq:first_exact_csg}
    \csg(z,w) =
    \sum_{q \geq 1}
    \frac{(-1)^{q+1}}{q}
    \bigg(
    \sum_{n \geq 1}
    (1+w)^{\binom{n}{2}}
    \frac{z^n}{n!}
    \bigg)^q.
\end{equation}
This expression was the starting point of the analysis of \cite{FSS04},
who worked on connected graphs with fixed excess.
If we extract the coefficient $n! [z^n w^m]$,
we obtain an exact expression for the number
of connected graphs with $n$ vertices and $m$ edges
\[
    \csg_{n,m-n} =
    \sum_{q = 1}^n
    \frac{(-1)^{q+1}}{q}
    \sum_{\substack{n_1 + \cdots + n_q = n\\ \forall j,\ n_j \geq 1}}
    \binom{n}{n_1, \ldots, n_q}
    \binom{\sum_{j=1}^q \binom{n_j}{2}}{m}.
\]
However, as already observed by those authors,
it is difficult to extract the asymptotics,
because of ``\emph{magical}'' cancellations in the coefficients.
In particular, the dominant contribution to the sum does not come
from the first value $q=1$,
because the summand is then the number of (non-empty) graphs
with $n$ vertices and $m$ edges.
Those graphs are indeed typically not connected,
as they contain many trees and unicycles
(\ie components of excess $-1$ or $0$, see \cite{ER60}).

    \paragraph{Connected graphs and positive graphs.}

Instead of working on this expression
using complicated analysis,
we will derive a different (although similar) expression,
where the dominant contribution is easier to locate.
The main idea, already applied by \cite{PW05},
is to consider the family $\sgpos$
of graphs without trees and unicycles.
We call them \emph{positive graphs},
as their components all have a positive excess.
Their generating function is derived in Proposition~\ref{th:sgpos}.
Using the fact that a positive graph of positive excess $k$
is a set of connected graphs with positive excess
which excesses sum to $k$,
we will obtain in Proposition~\ref{th:csg_exact}
the following expression for the number
of connected graphs with $n$ vertices and excess $k$
\[
    \csg_{n,k} = n! [z^n y^k]
    \log \bigg(
    1 + \sum_{\ell \geq 1} \sgpos_{\ell}(z) y^{\ell}
    \bigg).
\]
This expression looks similar to the previous one.
However, the dominant contribution now comes
from the first terms of the Taylor expansion of the logarithm.
We prove in Lemma~\ref{th:csg_bender_applied}
that the tail of the sum is indeed negligible.
A result on the first term $ n! [z^n] \sgpos_k(z)$
was already proven by\cite{ER60}:
when $k = \exactbigO(n)$, a positive graph
with $n$ vertices and excess $k$ is almost surely connected, which implies
\[
    \csg_{n,k} \sim \sgpos_{n,k}.
\]
Hence, the dominant asymptotics of connected graphs
and positive graphs are the same.
This property was at the foundation of the proof of \cite{PW05}.
This asymptotic relation is not precise enough for our purpose,
as we want to derive an arbitrary number of error terms.
A positive graph is connected with probability tending to $1$,
but to gain more information on the speed of convergence,
we need to consider also the less likely cases
where the positive graph is a set of connected graphs.
Intuitively, it seems clear that the most probable configuration
is that one of those connected graphs
has a large excess, while the others
have a small (constant) excess.
This motivates the derivation, in Proposition~\ref{th:sgpos},
of two expressions for the generating function
of positive graphs of excess $k$:
one suited for the case where $k$ goes to infinity with $n$,
the other for constant values of $k$.
Lemma~\ref{th:csg_bender_applied} translates this intuition
into error terms for $\csg_{n,k}$.

    \paragraph{Positive graphs and cores.}

\begin{figure}
\begin{center}
\includegraphics[scale=0.9]{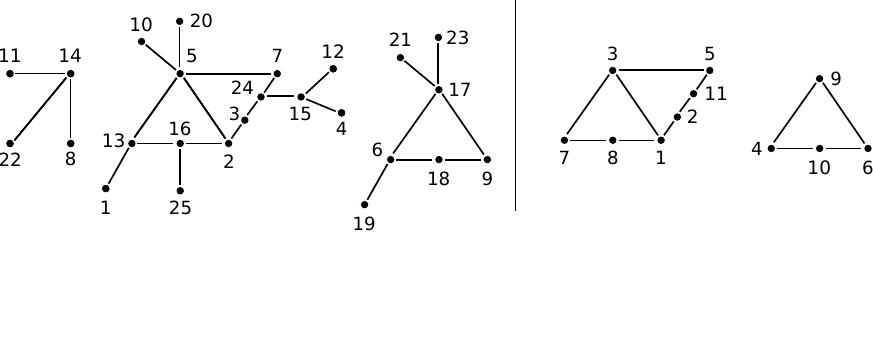}
\caption{A graph $G$ and its core $H$,
obtained after iteratively removing all vertices of degree $0$ and $1$.
Observe that the core contains no tree,
and that the graph $G$ without its tree component
can be obtained from the core $H$
by adding rooted trees to the vertices.}
\label{fig:core} 
\end{center}
\end{figure}

As already observed by \cite{W80} and \cite{PW05},
a convenient way to remove all trees from a graph $G$
is to remove iteratively all vertices of degree $0$ and $1$.
The graph $H$ obtained is then a graph
of minimum degree $2$, called a \emph{core}.
This process is illustrated in Figure~\ref{fig:core}.
A positive core is then a core where all components have a positive excess.
The only components of nonpositive excess with minimum degree at least $2$
are isolated cycles, which have excess $0$.
Reversely, any positive graph of excess $k$
is a core of excess $k$,
where isolated cycles are removed,
and where rooted trees are added to each vertex.
In Proposition~\ref{th:sgpos},
the symbolic method (see Section~\ref{sec:analytic_combinatorics})
is applied to translate this combinatorial description
into an expression for the generating function of positive graphs,
involving the generating function of cores.
Finally, We apply results of \cite{EdPR16} to express
the generating function of cores of a given excess,
in Proposition~\ref{th:core}.

When considering graphs with degree constraints,
such as cores,
it is more convenient to work with multigraphs,
where loops and multiple edges are allowed,
as was already observed in the configuration model from \cite{Wo78, Bo80}.

    \paragraph{From positive multigraphs to positive graphs.}

To analyze positive graphs instead of positive multigraphs,
we will apply Lemma~\ref{th:multigraphs_to_graphs}.
It requires to remove the loops and double edges
from the positive multigraphs.
This is achieved using an inclusion-exclusion technique
(more examples of application of this technique are provided by \cite{FS09}).
Since multigraphs are interesting in themselves,
and are simpler to analyze,
we will first derive in Section~\ref{sec:cmg}
the asymptotic expansion of connected multigraphs
with $n$ vertices and excess $k$ growing linearly with $n$.
This also provides an introduction to the more complex proof
for connected graphs, presented in Section~\ref{sec:csg}.

\section{Connected multigraphs with large excess}
\label{sec:cmg}

In this section, the asymptotic expansion of connected multigraphs
with $n$ vertices and excess $k$, growing linearly with $n$,
is derived.
The outline of the proof from Section~\ref{sec:outline}
motivates the introduction of multicores
(multigraphs with minimum degree at least $2$)
and positive multigraphs
(multigraphs where all components have a positive excess,
\ie containing neither trees nor unicycle components).
Multicores are particular cases
of multigraphs with constraints on their degrees.
The generating functions of such multigraphs
have been derived by \cite{EdPR16},
and the first part of the proof
of the following lemma relies on their work.
The next proposition has been obtained independently by \cite{ElieThesis} and \cite{Bo17}.
The last author applied it to obtain a complete asymptotic expansion
of multicores with weights depending on their vertices degrees.

\begin{proposition} \label{th:deg_constraints}
Given a subset $D$ (finite or infinite) of $\integers_{\geq 2}$,
and its generating function
\[
    \Delta(x) = \sum_{d \in D} \frac{x^d}{d!},
\]
then the generating function of multigraphs of excess $k$
where all vertices have their degree in $D$ is
\[
    \mg^{(D)}_k(z) =
    (2k-1)!! [x^{2k}]
    \frac{1}{\big(1 - z \frac{\Delta(x)}{x^2/2}\big)^{k+1/2}}.
\]
\end{proposition}

\begin{proof}
Let $\mg^{(D)}$ denote the set of multigraphs
where all vertices have their degree in $D$.
We first recall the proof for the expression
of the bivariate generating function $\mg^{(D)}(z,w)$,
obtained by \cite{EdPR16},
then extract the formula for the generating function $\mg^{(D)}_k(z)$
of multigraphs from $\mg^{(D)}$ of excess $k$.
Let us consider a multigraph $G$.
As illustrated in Figure~\ref{fig:half_edges},
each edge $(v,w,\ell) \in E(G)$
of label $\ell$ and linking the vertex $v$ to the vertex $w$
can be replaced by two half-edges,
one attached to $v$ and labeled $2\ell-1$,
the other attached to $w$ and labeled $2\ell$.
The size of the set of half-edges attached to a vertex
is then equal to its degree.
If $G$ is in $\mg^{(D)}$, then the sizes of those sets are in $D$.
Therefore, the multigraph $G$ is now represented
as a set of vertices,
each coming with a set of half-edges of size in $D$,
and the total number of half-edges is twice the number of edges of $G$.
The symbolic method (see Section~\ref{sec:analytic_combinatorics})
translates this combinatorial description
into the following generating function expression
for the generating function of $\mg^{(D)}$
\[
    \mg^{(D)}(z,w) =
    \sum_{m \geq 0} (2m)! [x^{2m}] e^{z \Delta(x)} \frac{w^m}{2^m m!},
\]
where
\begin{itemize}
\item
the variable $x$ is used to mark the half-edges,
\item
$\Delta(x)$ is the generating function of sets of size in $D$,
\item
$e^{z \Delta(x)}$ is the generating function of sets of vertices,
each coming with a number of labeled half-edges that lies in $D$,
\item
the coefficient extraction $(2m)! [x^{2m}]$ fixes the number of half-edges to $2m$,
\item
the product by $\frac{w^m}{2^m m!}$ represents the addition of $m$ edges,
to replace the $2m$ half-edges,
\item
the sum over $m$ corresponds to the fact
that $\mg^{(D)}$ is the disjoint union, for all $m \geq 0$,
of the subsets of $\mg^{(D)}$
of multigraphs having exactly $m$ edges.
\end{itemize}
In this expression, after developing the exponential as a sum over $n$,
applying the change of variable $m \mapsto k+n$,
and replacing $[x^{2m}]$ with $[x^{2k}] x^{-2n}$, we obtain
\[
    \mg^{(D)}(z,w) =
    \sum_{k \geq 0}
    [x^{2k}]
    \sum_{n \geq 0}
    \frac{(2(k+n))!}{2^{k+n} (k+n)!}
    \frac{\left( z w \frac{\Delta(x)}{x^2} \right)^n}{n!}
    w^k.
\]
The sum over $n$ is replaced by its closed form
\[
    \mg^{(D)}(z,w) =
    \sum_{k \geq 0}
    [x^{2k}]
    \frac{(2k-1)!!}
    {\left( 1 - z w \frac{\Delta(x)}{x^2/2} \right)^{k + 1/2}}
    w^k.
\]
The generating function of multigraphs from $\mg^{(D)}$
of excess $k$ is then (see Lemma~\ref{th:fixed_excess})
\[
    \mg^{(D)}_k(z) = [y^k] \mg^{(D)}(z/y,y) =
    (2k-1)!!
    [x^{2k}]
    \frac{1}
    {\left( 1 - z \frac{\Delta(x)}{x^2/2} \right)^{k + 1/2}}.
\]
\end{proof}

\begin{figure}[htbp]
  \begin{center}
    \includegraphics[scale=0.9]{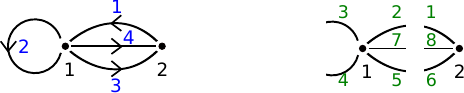}
    \caption{Left, a multigraph.
        Right, the multigraph obtained after cutting each edge
        into two labeled half-edges, as explained in the proof
        of Proposition~\ref{th:deg_constraints}.}
    \label{fig:half_edges}
  \end{center}
\end{figure}

The ``\emph{half-edges}'' idea is reminiscent
of the \emph{configuration model}, introduced by \cite{Bo80} and \cite{Wo78}.
During the proof, we have derived a particular case
of the following general formula
\begin{equation} \label{eq:m_to_k} 
    \sum_{m \geq 0}
    (2m-1)!! [x^{2m}]
    A(z,w,x) e^{z \Delta(x)}
    w^m
    =
    \sum_{k \geq 0}
    (2k-1)!! [x^{2k}]
    \frac{A(z,w,x)}{\big(1 - z w \frac{\Delta(x)}{x^2/2}\big)^{k+1/2}}
    w^k,
\end{equation}
valid for any formal power series $A(z,w,x)$ and $\Delta(x)$
when $\Delta(0) = \Delta'(0) = 0$.
It will be used again in Proposition~\ref{th:core}.
A positive multicore is a multigraph with minimum degree at least $2$,
where all connected components have positive excess.
We now apply the previous proposition to express
their generating function.

\begin{proposition} \label{th:mcorepos}
The generating functions of positive multicores of excess $k$ is
\[
    \mcorepos_k(z)
    =
    (2k-1)!! [x^{2k}] \frac{\sqrt{1-z}}{\big(1 - z \frac{e^x-1-x}{x^2/2}\big)^{k+1/2}}.
\]
\end{proposition}

\begin{proof}
Multicores are multigraphs with minimum degree at least $2$,
so, with the notations of Proposition~\ref{th:deg_constraints},
$\mcore = \mg^{(\integers_{\geq 2})}$.
Since
\[
    \sum_{d \in \integers_{\geq 2}} \frac{x^d}{d!}
    = e^x-1-x,
\]
The proposition provides the following expression
for the generating function of multicores of excess $k$
\[
    \mcore_k(z) =
    (2k-1)!! [x^{2k}] \frac{1}{\big(1 - z \frac{e^x-1-x}{x^2/2}\big)^{k+1/2}}.
\]
The only components of a multicore with nonpositive excess
are isolated cycles.
Thus, any multicore has a unique decomposition as
a positive multicore, and a set of isolated cycles.
Since those isolated cycles have excess $0$,
the multicore and the positive multicore have the same excess, so
\[
    \mcore_k(z) = \mcorepos_k(z) e^{\mcycle(z)},
\]
where $\mcycle(z) = \mcycle_0(z)$ denotes
the univariate generating function of (multigraph) isolated cycles.
There are $(n-1)!$ ways to label the vertices of a cycle of length $n$,
and $2^n n!$ ways to label and orient its edges,
so the bivariate generating function of isolated cycles is equal to
\[
    \mcycle(z,w)
    =
    \sum_{n \geq 0} (n-1)! 2^n n! \frac{w^n}{2^n n!} \frac{z^n}{n!}
    =
    \frac{1}{2} \log \left( \frac{1}{1-zw} \right),
\]
and the univariate generating function is
\[
    \mcycle(z) = [y^0] \mcycle(z/y,y) = \frac{1}{2} \log \left( \frac{1}{1-z} \right).
\]
Combining the last equations, we obtain
\[
    \mcorepos_k(z)
    =
    \mcore_k(z) e^{-\frac{1}{2} \log \left( \frac{1}{1-z} \right)}
    =
    (2k-1)!! [x^{2k}] \frac{\sqrt{1-z}}{\big(1 - z \frac{e^x-1-x}{x^2/2}\big)^{k+1/2}}.
\]
\end{proof}

In the next proposition,
the generating function of positive multicores
is used to express the generating function
of positive multigraphs of a given excess.
Two formulas are provided:
the first one is suited for fixed values of the excess $k$,
as it requires the computation of a polynomial of degree $2k$,
while the other one is suited to excesses
going to infinity with the number of vertices.

\begin{proposition} \label{th:mgpos}
The generating function of positive multigraphs of excess $k$
has the following two expressions
\begin{align*}
    \mgpos_k(z)
    &=
    \frac{\mk_k(T(z))}{(1-T(z))^{3k}},
    \\
    \mgpos_k(z)
    &=
    (2k-1)!! [x^{2k}]
    \sqrt{1-T(z)} B(z,x)^{k+1/2},
\end{align*}
where $\mk_k(T)$ is a polynomial of degree $2k$,
and the expressions of $\mk_k(T)$ and $B(z,x)$ are
\begin{align*}
    \mk_k(T) &=
    (2k-1)!! (1-T)^{2k} [x^{2k}]
    \left( 1 - \frac{T}{1-T} \frac{e^x-1-x-x^2/2}{x^2/2} \right)^{-k-1/2},\\
    B(z,x) &= \left(1 - T(z) \frac{e^x-1-x}{x^2/2}\right)^{-1}.
\end{align*}
\end{proposition}

\begin{proof}
We first prove the second expression of the proposition.
Iteratively removing the vertices of degree $1$
from a positive multigraph $G$
reduces it to a positive multicore $H$.
Observe that the removed vertices form trees,
rooted at the vertices of the multicore,
and that $G$ and $H$ share the same excess.
In fact, any positive multigraph of excess $k$
has a unique decomposition
as a positive multicore of excess $k$,
where a rooted tree is planted at each vertex.
This implies the following generating function relation
\[
    \mgpos_k(z) = \mcorepos_k(T(z)),
\]
where $T(z)$ is the univariate generating function of rooted trees
(see Lemma~\ref{th:cayley}).
This implies, applying Proposition~\ref{th:mcorepos} to express $\mcorepos_k(z)$,
\[
    \mgpos_k(z)
    = (2k-1)!! [x^{2k}] 
    \frac{\sqrt{1-T(z)}}{\big(1 - T(z) \frac{e^x-1-x}{x^2/2}\big)^{k+1/2}}.
\]
To derive the second result of the proposition,
we start with the previous expression of $\mgpos_k(z)$,
where $1-T(z) \frac{e^x-1-x}{x^2/2}$ is replaced by
$(1-T(z)) \left( 1 - \frac{T(z)}{1-T(z)} \frac{e^x-1-x-x^2/2}{x^2/2} \right)$
\[
    \mgpos_k(z) =
    (2k-1)!!
    [x^{2k}]
    \frac{1}{(1-T(z))^{k} \big(1- \frac{T(z)}{1-T(z)} \frac{e^x-1-x-x^2/2}{x^2/2}\big)^{k+1/2}}.
\]
This expression is rewritten
\[
    \mgpos_k(z) = \frac{\mk_k(T(z))}{(1-T(z))^{3k}},
\]
where
\[
    \mk_k(T) =
    (2k-1)!! (1-T)^{2k} [x^{2k}]
    \frac{1}{\big(1- \frac{T}{1-T} \frac{e^x-1-x-x^2/2}{x^2/2}\big)^{k+1/2}}.
\]
Since $\frac{e^x-1-x-x^2/2}{x^2/2}$ has valuation $1$,
$
    [x^{2k}] \big(1- Y \frac{e^x-1-x-x^2/2}{x^2/2}\big)^{-k-1/2}
$
is a polynomial in $Y$ of degree exactly $2k$.
This implies that $\mk_k(T)$ is a polynomial of degree $2k$.
\end{proof}

The generating function of positive multigraphs of excess $k$
has been proven to be a rational function in $T(z)$,
using generating function calculations.
A combinatorial interpretation of this fact is provided in Section~\ref{sec:kernels}.
It is based on a simpler version of a proof of \cite{W80},
stating that the generating function of connected graphs of excess $k$
is a rational function in $T(z)$.
For any fixed positive $k$,
the first expression of $\mgpos_k(z)$
is amenable to asymptotic analysis
using a singularity analysis (see \cite{FS09}).
When $k$ grows linearly with the number of vertices,
the asymptotics is extracted applying
to the second expression the following saddle-point lemma,
proven at the end of Appendix~\ref{sec:saddle_point}.

\begin{lemma} \label{th:mgpos_asymptotics}
Consider a positive integer $d$,
and integers $n$ and $k$ going to infinity such that $\alpha := k/n$ stays in a closed interval $\mK$ of $\reals_{>0}$.
Let $\lambda$ and $\zeta$ denote the unique positive solutions of the equations
\[
    \frac{\lambda}{2} \frac{e^{\lambda}+1}{e^{\lambda}-1} = \alpha + 1,
    \quad
    T(\zeta) = \alpha+1-\frac{\lambda}{2},
\]
$A(z,x)$ a bivariate function 
analytic on the closed torus of radii $(\zeta, \lambda)$,
and
\[
    B(z,x) = \frac{1}{1 - T(z) \frac{e^x-1-x}{x^2/2}}.
\]
Then the following asymptotic expansion holds uniformly for $k/n$ in $\mK$
\[
    [z^n x^{2k}]
    A(z,x)
    B(z,x)^k
    =
    \frac{B(\zeta, \lambda)^k}{2 \pi k \zeta^n \lambda^{2k}}
    \bigg(
    \sum_{r=0}^{d-1}
    c_r k^{-r}
    + \bigO(k^{-d})
    \bigg),
\]
with
\[
    B(\zeta, \lambda) = \frac{\lambda}{2 \alpha},
    \quad
    \zeta = e^{-\alpha-1} \sqrt{(\alpha+1)^2 - (\lambda/2)^2},
    \quad
    c_0 =
    A(\zeta, \lambda)
    \sqrt{\frac{\alpha^3}{\lambda}
    \frac{\lambda/2-\alpha}{(\lambda/2-\alpha)^2+\lambda \alpha}}
\]
and the formula for the other $(c_r)$ is as follows.
There is a biholomorphic function $\psi(x,y) = (\psi_1(x,y), \psi_2(x,y))$
sending $(0,0)$ to $(0,0)$
such that
\[
    - \log \left( \frac{B(\zeta e^{i \psi_1(x,y)}, \lambda e^{i \psi_2(x,y)})}{B(\zeta,\lambda)} \right)
    + i \left( \frac{\psi_1(x,y)}{\alpha} + 2 \psi_2(x,y) \right) =
    \frac{x^2+y^2}{2}.
\]
Its Jacobian matrix is denoted by $J_\psi(x,y)$, and we have
\[
    c_r =
    \sum_{t=0}^r
    (2t-1)!! (2(r-t)-1)!! [x^{2t} y^{2(r-t)}]
    A(\zeta e^{i \psi_1(x,y)}, \lambda e^{i \psi_2(x,y)}) \det(J_{\psi}(x,y)).
\]
Each of $c_r$, $\lambda$ and $\zeta$ is a smooth function of $k/n$.
\end{lemma}

According to Lemma~\ref{th:fixed_excess},
the number of positive multigraphs
with $n$ vertices and excess $k$ is
\[
    \mgpos_{n,k} = n! 2^{n+k} (n+k)! [z^n] \mgpos_k(z).
\]
As a corollary of Proposition~\ref{th:mgpos}
and Lemma~\ref{th:mgpos_asymptotics}
applied with $A(z,x) = \sqrt{(1-T(z)) B(z,x)}$,
the number of positive multigraphs
with $n$ vertices and excess $k$ with $k/n$ in a closed interval of $\reals_{>0}$
has asymptotics
\[
    \mgpos_{n,k}
    \sim
    n! 2^{n+k} (n+k)!
    (2k-1)!!
    \sqrt{(1-T(\zeta))
    \frac{\alpha^3}{\lambda}
    \frac{\lambda/2-\alpha}{(\lambda/2-\alpha)^2+\lambda \alpha}}
    \frac{B(\zeta, \lambda)^{k+1/2}}{2 \pi k \zeta^n \lambda^{2k}}
\]
(using the notations of Lemma~\ref{th:mgpos_asymptotics}).
According to \cite{ER60}, when $k = \exactbigO(n)$,
a positive graph is almost surely connected,
a property used by \cite{PW05}.
The same holds for multigraphs, so
\[
    \cmg_{n,k} \sim \mgpos_{n,k}
\]
(we will not use this property, so it is stated here informally).
Thus, if we were only interested in the asymptotics of connected multigraphs,
we could stop our analysis here.
Deriving an arbitrary number of error terms requires more work.
The next proposition recalls the link between the generating functions
of connected multigraphs, and positive multigraphs.

\begin{proposition} \label{th:cmg_exact}
For any positive $k$, 
the generating function of connected multigraphs of excess $k$
is equal to
\[
    \cmg_k(z) = [y^k] \log \bigg( 1 + \sum_{\ell \geq 1} \mgpos_{\ell}(z) y^{\ell} \bigg).
\]
\end{proposition}

\begin{proof}
A positive multigraph is a set of connected multigraphs of positive excess, so
\[
    \mgpos(z,w) = e^{\cmg^{>0}(z,w)},
\]
where $\cmg^{>0}(z,w)$ denotes the generating function
of connected multigraphs with a positive excess.
Taking the logarithm, replacing $z$ with $z/y$ and $w$ with $y$,
and extracting the coefficient $[y^k]$ for $k \geq 1$,
we obtain the following expression for the generating function
of connected multigraphs of excess $k$ (see Lemma~\ref{th:fixed_excess})
\[
    \cmg_k(z) = [y^k] \cmg(z/y,y) =
    [y^k] \log(\mgpos(z/y,y)).
\]
According to Lemma~\ref{th:fixed_excess}, we have
\[
    \mgpos(z/y,y) = \sum_{\ell \geq 0} \mgpos_{\ell}(z) y^{\ell}.
\]
The only positive multigraph of excess $0$ is the empty multigraph,
and its generating function is equal to $1$.
This is confirmed by Proposition~\ref{th:mgpos}
\[
    \mgpos_0(z)
    = (2\times 0 - 1)!! [x^0] \frac{\sqrt{1-T(z)}}{\big(1 - T(z) \frac{e^x-1-x}{x^2/2}\big)^{0 +1/2}}
    = 1.
\]
Thus, we have
\[
    \cmg_k(z) =
    [y^k] \log \bigg(1 + \sum_{\ell \geq 1} \mgpos_{\ell}(z) y^{\ell}\bigg).
\]
\end{proof}

Given the rapid growth of the asymptotics of $\mgpos_k(z)$
with respect to $k$ for any real $z$ in $]0,1/e[$
($1/e$ is the radius of convergence of $T(z)$, and hence of $\mgpos_k(z)$),
the work of \cite{Be75} comes to mind
to extract the asymptotic expansion of $\cmg_{n,k}$.
Informally, his Theorem~$1$ implies that
when the sequence $(g_k)_{k \geq 1}$ grows fast enough to infinity
and $f(y)$ has a nonzero radius of convergence, then
\[
    [y^k] f\bigg(\sum_{\ell \geq 1} g_{\ell} y^{\ell}\bigg) =
    \sum_{r=0}^d
    g_{k-r}
    [y^r] f'\bigg(\sum_{\ell \geq 1} g_{\ell} y^{\ell}\bigg)
    + \bigO(g_{k-d-1}).
\]
In this expression, observe that there is a finite number of summands,
indexed by $r$, and that
$[y^r] f'(\sum_{\ell \geq 1} g_{\ell} y^{\ell})$
is a finite sum of product of terms from $(g_{\ell})_{\ell \geq 1}$.
Thus, the asymptotics of the $r$th summand when $k$ is large
is driven by $g_{k-r}$.
To apply this theorem, we would set
\[
    f(y) = \log(1+y)
    \quad
    \text{ and }
    \quad
    g_k = \mgpos_k(z).
\]
Because we are dealing with this extra variable $z$,
our problem does not fit as it is in the theorems of \cite{Be75}.
Instead, we apply the following result.
Its proof is provided at the end of Appendix~\ref{sec:divergent_series},
and follows a recent extension of Bender's Theorem,
due to \cite{Bo16}.

\begin{lemma} \label{th:bender_like}
Consider a formal bivariate series
\[
    \sum_{\ell \geq 1} g_{\ell}(z) y^{\ell}
\]
with nonnegative coefficients,
and assume there exist positive constants $E$, $\beta$ and $\zeta$
such that when $\ell$ goes to infinity,
\[
    g_{\ell}(\zeta) = \bigO( E^{\ell} \Gamma(\ell + \beta) ),
\]
assuming that the radius of convergence of each $g_{\ell}(z)$ is greater than $\zeta$.
Let $f(z)$ be a function analytic at the origin,
then for any positive integer $d$, we have
\[
    [z^n y^k] f \bigg( \sum_{\ell \geq 1} g_{\ell}(z) y^{\ell} \bigg) =
    [z^n]
    \sum_{r=0}^{d}
    g_{k-r}(z)
    [y^r]
    f'\bigg( \sum_{\ell = 1}^r g_{\ell}(z) y^{\ell} \bigg)
    +
    \bigO
    \left(
    \frac{E^k}{\zeta^n}
    \Gamma(k-d-1+\beta)
    \right).    
\]
\end{lemma}

Applying the previous lemma to the expression of $\cmg_k(z)$
derived in Proposition~\ref{th:cmg_exact},
we obtain the following expression
for the number of connected multigraphs.

\begin{lemma} \label{th:bender_applied}
Consider a positive integer $d$,
and two integers $n$, $k$ going to infinity
such that $\alpha := k/n$ stays in a closed interval $\mK$ of $\reals_{>0}$.
Then the number of connected multigraphs
with $n$ vertices and excess $k$
has the following asymptotic expansion,
uniformly for $\alpha$ in $\mK$
\[
    \cmg_{n,k}
    =
    n! 2^{n+k} (n+k)! (2k-1)!!
    \bigg(
    \sum_{r=0}^{d}
    \frac{(2(k-r)-1)!!}{(2k-1)!!}
    [z^n x^{2k}]
    A_r(z,x)
    B(z,x)^k
    + \bigO \left(
    k^{-d-1}
    \frac{B(\zeta, \lambda)^k}{\zeta^n \lambda^{2k}}
    \right)
    \bigg),
\]
where the series $B(z,x)$ and $A_r(z,x)$ are equal to
\begin{align*}
    B(z,x) &= \left( 1 - T(z) \frac{e^x-1-x}{x^2/2} \right)^{-1},\\
    A_r(z,x) &= \frac{x^{2r} M_r(T(z)) B(z,x)^{-r+1/2}}{(1-T(z))^{3r-1/2}},\\
    M_r(T) &=
    [y^r] \bigg( 1 +
    \sum_{\ell = 1}^r \mk_{\ell}(T) y^{\ell}
    \bigg)^{-1},
\end{align*}
$M_r(T)$ is a polynomial of degree at most $2r$, and
the formula for the polynomial $\mk_{\ell}(T)$ is provided by Proposition~\ref{th:mgpos}.
\end{lemma}

\begin{proof}
According to Proposition~\ref{th:cmg_exact},
the number of connected multigraphs
with $n$ vertices and excess $k$ is
\[
    \cmg_{n,k}
    =
    n! 2^{n+k} (n+k)! [z^n] \log \bigg( 1 + \sum_{\ell \geq 1} \mgpos_{\ell}(z) \bigg).
\]
Proposition~\ref{th:mgpos} provides the following expression
for the generating function of positive multigraphs of excess $\ell$
\[
    \mgpos_{\ell}(z) =
    (2\ell - 1)!! [x^{2\ell}] \sqrt{1-T(z)} B(z,x)^{\ell + 1/2}.
\]
Using the relation
\[
    (2\ell - 1)!! = \frac{2^{\ell}}{\sqrt{\pi}} \Gamma(\ell + 1/2),
\]
the values $\zeta$, $\lambda$ from Lemma~\ref{th:mgpos_asymptotics},
and the bound from Lemma~\ref{th:saddle_point_bound}, we conclude
\[
    \mgpos_{\ell}(\zeta)
    \leq
    \frac{2^{\ell}}{\sqrt{\pi}} \Gamma(\ell + 1/2)
    \frac{\sqrt{1-T(\zeta)} B(\zeta, \lambda)^{\ell + 1/2}}{\lambda^{2 \ell}}.
\]
The hypothesis of Lemma~\ref{th:bender_like} are satisfied
with $g_{\ell}(z) = \mgpos_{\ell}(z)$,
$f(z) = \log(1+z)$, $\beta = 1/2$,
and $E = 2 B(\zeta, \lambda)/ \lambda^2$,
so $\cmg_{n,k}$ is equal to
\begin{equation} \label{eq:th:cmg_asymptotics}
    n! 2^{n+k} (n+k)!
    \bigg(
    \sum_{r=0}^{d}
    [z^n]
    \mgpos_{k-r}(z)
    [y^r]
    \bigg(
    1 + \sum_{\ell \geq 1} \mgpos_{\ell}(z) y^{\ell}
    \bigg)^{-1}
    + \bigO \left(
    (2(k-d)-3)!!
    \frac{B(\zeta, \lambda)^k}{\zeta^n \lambda^{2k}}
    \right)
    \bigg).
\end{equation}
Proposition~\ref{th:mgpos} provides the expression
\[
    \mgpos_{\ell}(z) = \frac{\mk_{\ell}(T(z))}{(1-T(z))^{3\ell}},
\]
where $\mk_{\ell}(T)$ is a polynomial of degree $2 \ell$.
Injecting this expression and applying
the change of variable $y \mapsto (1-T(z))^3 y$, we obtain
\[
    [y^r]
    \bigg(
    1 + \sum_{\ell \geq 1} \mgpos_{\ell}(z) y^{\ell}
    \bigg)^{-1}
    =
    \frac{
    [y^r] \left(
    1 + \sum_{\ell \geq 1} \mk(T(z)) y^{\ell}
    \right)^{-1}}
    {(1-T(z))^{3r}},
\]
where the numerator is a polynomial in $T(z)$ of degree at most $2 r$,
denoted by $M_r(T)$.
Injecting the second expression of $\mgpos_k(z)$
from Proposition~\ref{th:mgpos},
the expression of $\mg_{n,k}$ from Equation~\eqref{eq:th:cmg_asymptotics} becomes
\[
    n! 2^{n+k} (n+k)!
    \bigg(
    \sum_{r=0}^{d}
    (2(k-r)-1)!!
    [z^n x^{2(k-r)}]
    \frac{B(z,x)^{k-r+1/2} M_r(T(z))}{(1-T(z))^{3r-1/2}}
    + \bigO \left(
    (2(k-d)-3)!!
    \frac{B(\zeta, \lambda)^k}{\zeta^n \lambda^{2k}}
    \right)
    \bigg).
\]
We multiply and divide by $x^{2r}$ and $(2k-1)!!$,
and rearrange the terms to obtain
\[
    \cmg_{n,k}
    =
    n! 2^{n+k} (n+k)! (2k-1)!!
    \bigg(
    \sum_{r=0}^{d}
    \frac{(2(k-r)-1)!!}{(2k-1)!!}
    [z^n x^{2k}]
    A_r(z,x)
    B(z,x)^k
    + \bigO \left(
    k^{-d-1}
    \frac{B(\zeta, \lambda)^k}{\zeta^n \lambda^{2k}}
    \right)
    \bigg),
\]
where $A_r(z,x) = \frac{x^{2r} M_r(T(z)) B(z,x)^{-r+1/2}}{(1-T(z))^{3r-1/2}}$.
\end{proof}

We now extract the coefficient asymptotic expansion
of each summand using Lemma~\ref{th:mgpos_asymptotics},
and obtain the asymptotic expansion of $\cmg_{n,k}$.

\begin{theorem} \label{th:cmg_asymptotics}
Consider a positive integer $d$,
and two integers $n$, $k$ going to infinity
such that $\alpha := k/n$ stays in a closed interval $\mK$ of $\reals_{>0}$.
Then the number of connected multigraphs
with $n$ vertices and excess $k$
has the following asymptotic expansion,
uniformly for $\alpha$ in $\mK$
\[
    \cmg_{n,k} =
    n! 2^{n+k} (n+k)! (2k-1)!!
    \frac{B(\zeta, \lambda)^k}{2 \pi k \zeta^n \lambda^{2k}}
    \bigg(
    \sum_{r=0}^{d-1}
    c_r k^{-r}
    + \bigO(k^{-d})
    \bigg),
\]
where $\zeta$ is the unique positive solution of the equation
\[
    \frac{\lambda}{2} \frac{e^{\lambda}+1}{e^{\lambda}-1} = \alpha + 1,
\]
the values of $B(\zeta, \lambda)$ and $\zeta$ are equal to
\[
    B(\zeta, \lambda) = \frac{\lambda}{2 \alpha},
    \quad
    \zeta = e^{-\alpha-1} \sqrt{(\alpha+1)^2 - (\lambda/2)^2},
\]
the first constant $c_0$ is equal to
\[
    c_0 =
    \frac{\alpha(\lambda/2-\alpha)}{\sqrt{\lambda^2/2 - 2 \alpha^2 - 2 \alpha}},
\]
and the expression for the other $(c_r)$ is given in the proof.
Each of $c_r$, $\lambda$ and $\zeta$ is a smooth function of $\alpha$.
\end{theorem}

\begin{proof}
The starting point is the result of Lemma~\ref{th:bender_applied}
\begin{equation} \label{eq:cmg_asymptotics}
    \cmg_{n,k}
    =
    n! 2^{n+k} (n+k)! (2k-1)!!
    \bigg(
    \sum_{r=0}^{d}
    \frac{(2(k-r)-1)!!}{(2k-1)!!}
    [z^n x^{2k}]
    A_r(z,x)
    B(z,x)^k
    + \bigO \left(
    k^{-d-1}
    \frac{B(\zeta, \lambda)^k}{\zeta^n \lambda^{2k}}
    \right)
    \bigg),
\end{equation}
where the notations of the lemma have been used.
We first derive the asymptotic expansion of $\frac{(2(k-r)-1)!!}{(2k-1)!!}$
up to the order $d$
when $k$ is large and $r$ is a fixed nonnegative integer.
By definition, we have
\[
    \frac{(2(k-r)-1)!!}{(2k-1)!!}
    =
    \prod_{s=0}^{r-1} \frac{1}{2(k-s)-1}
    =
    \frac{1}{(2k)^r}
    \prod_{s=0}^{r-1}
    \frac{1}{1 - \frac{2s+1}{2k}}.
\]
Expanding $\left( 1 - \frac{2s+1}{2k} \right)^{-1}$
as a series in $1/k$, this expression becomes
\[
    \frac{1}{(2k)^r}
    \prod_{s=0}^{r-1}
    \sum_{t=0}^{d-r-1} \left( \frac{2s+1}{2k} \right)^t
    + \bigO(k^{-d})
\]
and has asymptotic expansion
\[
    \frac{(2(k-r)-1)!!}{(2k-1)!!}
    =
    \sum_{t=0}^{d-1}
    a_{r,t} k^{-t}
    + \bigO(k^{-d}),
\]
where each $a_{r,t}$ is equal to
\[
    a_{r,t} =
    2^{-t}
    \sum_{t_0 + \cdots + t_{r-1} = t-r}
    \prod_{s=0}^{r-1}
    (2s+1)^{t_s}.
\]
Observe that $a_{r,t}$ vanishes whenever $t$ is smaller than $r$.
We now turn to the asymptotic expansion of the coefficient extraction
$[z^n x^{2k}] A_r(z,x) B(z,x)^k$,
applying Lemma~\ref{th:mgpos_asymptotics}.
Let $\zeta$, $\lambda$, $\psi(x,y)$ and $J_{\psi}(x,y)$ be defined as in this lemma,
then we have the following asymptotic expansion
\[
    [z^n x^{2k}] A_r(z,x) B(z,x)^k =
    \frac{B(\zeta, \lambda)^k}{2 \pi k \zeta^n \lambda^{2k}}
    \bigg(
    \sum_{s=0}^{d-1}
    b_{r,s} k^{-s}
    + \bigO(k^{-d})
    \bigg),
\]
where each $b_{r,s}$ is computed using the formula
\[
    b_{r,s} =
    \sum_{a=0}^s
    (2a-1)!! (2(s-a)-1)!!
    [x^{2a} y^{2(s-a)}]
    A_r(\zeta e^{i \psi_1(x,y)}, \lambda e^{i \psi_2(x,y)})
    \det(J_{\psi}(x,y)).
\]
Injecting the asymptotic expansions of $\frac{(2(k-r)-1)!!}{(2k-1)!!}$
and $[z^n x^{2k}] A_r(z,x) B(z,x)^k$ into Equation~\eqref{eq:cmg_asymptotics},
we obtain
\[
    n! 2^{n+k} (n+k)! (2k-1)!!
    \bigg(
    \sum_{r=0}^d
    \bigg(
    \sum_{t=0}^{d-1} a_{r,t} k^{-t} + \bigO(k^{-d})
    \bigg)
    \frac{B(\zeta,\lambda)^k}{2 \pi k \zeta^n \lambda^{2k}}
    \bigg(
    \sum_{s=0}^{d-1} b_{r,s} k^{-s} + \bigO(k^{-d})
    \bigg)
    + \bigO \left(k^{-d} \frac{B(\zeta, \lambda)^k}{k \zeta^n \lambda^{2k}} \right)
    \bigg),
\]
which reduces to
\[
    \cmg_{n,k} =
    n! 2^{n+k} (n+k)! (2k-1)!!
    \frac{B(\zeta, \lambda)^k}{2 \pi k \zeta^n \lambda^{2k}}
    \bigg(
    \sum_{s=0}^{d-1}
    c_s k^{-s}
    + \bigO(k^{-d})
    \bigg),
\]
where
\[
    c_s =
    \sum_{r=0}^d
    \sum_{t=0}^s
    a_{r,s-t}
    b_{r,t}.
\]
Since $s < d$ and $a_{r,s-t}$ vanishes when $s-t$ is smaller than $r$,
the coefficient $c_s$ is equal to
\[
    c_s =
    \sum_{t=0}^{s}
    \sum_{r=0}^{s-t}
    a_{r,s-t}
    b_{r,t}.
\]
In particular, $c_0 = b_{0,0}$,
which is equal to
\[
    b_{0,0} = \sqrt{(1-T(\zeta)) B(\zeta, \lambda)
    \frac{\alpha^3}{\lambda}
    \frac{\lambda/2-\alpha}{\lambda^2/4-\alpha^2-\alpha}}
\]
according to Lemma~\ref{th:mgpos_asymptotics}.
Injecting the values
\[
    e^{\lambda} = \frac{\alpha+1+\frac{\lambda}{2}}{\alpha+1-\frac{\lambda}{2}},
    \qquad
    T(\zeta) = \alpha+1-\frac{\lambda}{2}
\]
derived from the characterizations of $\lambda$ and $\zeta$,
we obtain
\[
    c_0 = b_{0,0} =
    \frac{\alpha(\lambda/2-\alpha)}{\sqrt{\lambda^2/2 - 2 \alpha^2 - 2 \alpha}}.
\]
\end{proof}

\section{Connected graphs with large excess}
\label{sec:csg}

Given a positive integer $d$,
our goal is to express the number $\csg_{n,k}$
of connected graphs with $n$ vertices and excess $k$,
up to a negligible term,
as a finite sum of terms of the form $[z^n x^{2k}] A(z,x) B(z,x)^k$,
so that Lemma~\ref{th:mgpos_asymptotics}
can be applied to extract the asymptotic expansion of order $d$.
We follow the same path as in the previous section,
starting with the enumeration of cores,
positive graphs, and finally connected graphs.

In order to shorten the expressions
of the generating functions of those families,
and clarify their structure,
many auxiliary functions are introduced:
$\pospatch_k(z)$, $\sk_k(T)$, $C_{\ell}(z,x)$, $B(z,x)$, $S_r(T)$
and, for the derivation of the coefficients of the asymptotic expansion
of $\csg_{n,k}$, the functions $A_{r,\ell}(z,x)$, $\psi_1(x,y)$, and $\psi_2(x,y)$.
Those functions have rather long and intimidating expressions,
but their only property worth noticing
is that their Taylor expansion at any point
can be computed using a computer algebra language.
This property ensures that the coefficients
of the asymptotic expansion of $\csg_{n,k}$
can be computed.

Our strategy to enumerate graph families will be to first derive
the generating function of the corresponding multigraph family,
where loops and double edges are marked with a variable $u$.
Setting $u=0$ gives access to the generating function
of multigraphs without loops and double edges,
which is equal to the generating function of the graph family,
according to Lemma~\ref{th:multigraphs_to_graphs}.
Therefore, we need a way to mark
the loops and multiple edges in multigraph families.
Our tool to do so is the inclusion-exclusion technique,
in conjunction with the notion of \emph{patchwork}.

    \subsection{Patchworks} \label{sec:patchworks}

The role of patchworks is to capture the complex structures
that loops and double edges produce when they are ``glued'' together.
Recall that $\LD(G)$ denotes the set of loops and double edges of a multigraph $G$,
and $\ld(G)$ is the cardinality of this set.

\begin{definition} \label{def:patchworks}
A \emph{patchwork} with $p$ parts
\[
  P = \{(V_1, E_1), \ldots, (V_p, E_p)\}
\]
is a set of $p$ pairs $(\text{vertices}, \text{edges})$ such that
\[
  \mg(P) = \left( \cup_{i=1}^p V_i, \cup_{i=1}^p E_i \right)
\]
is a multigraph, and each $(V_i, E_i)$ 
is either a loop or a double edge of $\mg(P)$,
\ie $P \subset \LD(\mg(P))$.
The sets $(V_i)$ need not be disjoint,
and neither do the sets $(E_i)$.
The number of parts of the patchwork is $|P|$.
Its number of vertices $n(P)$, edges $m(P)$, and its excess $k(P)$
are the corresponding numbers for $\mg(P)$.
See Figure~\ref{fig:patchwork}.
\end{definition}

\begin{figure}[htbp]
  \begin{center}
    \includegraphics[scale=1]{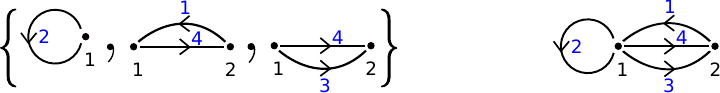}
    \caption{A patchwork $P$ of excess $2$, and the multigraph $\mg(P)$. 
    Observe that several patchworks can lead to the same multigraph.
    Here, $\LD(\mg(P)) \neq P$,
    since the double edge $(\{1,2\}, \{(2,1,1),(1,2,3)\})$ is missing from $P$.}
    \label{fig:patchwork}
  \end{center}
\end{figure}

In particular, all pairs $(V_i, E_i)$ are distinct,
$\mg(P)$ has minimum degree at least $2$,
and two edges in $E_i$, $E_j$ having the same label
must link the same vertices with the same orientation.
We use for patchwork generating functions
the same conventions as for multigraphs,
introducing an additional variable $u$ to mark the number of parts
\[
    \patch(z,w,u) =
    \sum_{\text{patchwork $P$}} u^{|P|}
    \frac{w^{m(P)}}{2^{m(P)} m(P)!}
    \frac{z^{n(P)}}{n(P)!}.
\]
The generating function of patchworks of excess $k$
is defined as
\[
    \patch_k(z,u) = [y^k] \patch(z/y, y, u),
\]
so
\[
    \patch(z,w,u) = \sum_{k \geq 0} \patch_k(zw,u) w^k.
\]
A patchwork $P$ is \emph{positive}
if all the connected components of $\mg(P)$
have positive excess.

\begin{lemma} \label{th:patchworks}
The generating function $\pospatch_k(z,u)$
of positive patchworks of excess $k$ is a multinomial,
which expression is derived in the appendix, in Lemma~\ref{th:pospatch}.
The generating function of patchworks of excess $k$ is equal to
\[
    \patch_k(z,u) = \pospatch_k(z,u) e^{u \frac{z}{2} + u \frac{z^2}{4}}.
\]
In particular, we have
\[
    \patch_0(z,u) = e^{u \frac{z}{2} + u \frac{z^2}{4}}.
\]
\end{lemma}

\begin{proof}
We first prove the third point of the lemma.
By definition, each vertex of a patchwork $P$ has degree at least $2$.
Thus, no component of $\mg(P)$ is a tree,
and the components with a nonpositive excess must be isolated cycles.
By definition again, the only possible
isolated cycles in $\mg(P)$
are the loops and the double edges,
which generating functions are, respectively,
\[
    \operatorname{Loop}(z,w,u) = u \frac{w^1}{2^1 1!} \frac{z^1}{1!},
    \qquad
    \operatorname{DoubleEdges}(z,w,u) = 4 u \frac{w^2}{2^2 2!} \frac{z^2}{2!},
\]
because there are $4$ different double edges
(see Definition~\ref{def:loops_double_edges}).
Patchworks of excess $0$ are sets of loops and double edges,
so their generating function is
\[
    \patch_0(z,u)
    =
    e^{\operatorname{Loop}(z,1,u) + \operatorname{DoubleEdges}(z,1,u)}
    =
    e^{u \frac{z}{2} + u \frac{z^2}{4}}.
\]
For the second point of the lemma, observe that
any patchwork of excess $k$ can be uniquely decomposed as
a positive patchwork of excess $k$,
and a patchwork of excess $0$, so
\[
    \patch_k(z,u) = \pospatch_k(z,u) e^{u \frac{z}{2} + u \frac{z^2}{4}}.
\]
Finally, for the first point of the lemma,
consider a positive patchwork $P$ of excess $k$.
In $\mg(P)$, each vertex of degree $2$
belongs to exactly one double edge and no loop.
The number of such double edges is at most $k$,
because each increases the global excess by $1$.
If we remove them, the corresponding multigraph
has minimum degree at least $3$ and excess at most $k$.
It is well known that there is a finite number of such multigraphs.
Indeed, consider any multigraph with minimum degree at least $3$
with $n$ vertices, $m$ edges, and excess $k=m-n$.
The sum of the degrees is at least $3n$.
Since this sum is twice the number of edges, we obtain
$2m \geq 3n$, which implies $n \leq 2k$ and $m \leq 3k$.
Thus, the family $\pospatch_k$ is finite,
and its generating function $\pospatch_k(z,u)$ is a multinomial.
\end{proof}

The multinomials $\pospatch_k(z,u)$ can be computed
by enumeration of all multigraphs with minimum degree at least $3$.
However, this is both inefficient and hard to compute.
An explicit expression is provided in Lemma~\ref{th:pospatch}.

    \subsection{Connected graphs}

The first part of the proof of the following proposition
relies on the work of \cite{EdPR16}.

\begin{proposition} \label{th:core}
The generating function of \emph{cores},
\ie graphs with minimum degree at least $2$, of excess $k$ is
\[
    \core_k(z) =
    \sum_{\ell = 0}^k
    (2(k-\ell)-1)!!
    [x^{2(k-\ell)}]
    \frac{\patch_{\ell}(z e^x,-1)}{\big(1-z \frac{e^x-1-x}{x^2/2}\big)^{k-\ell+1/2}}.
\]
\end{proposition}

\begin{proof}
Let $\mcore$ denote the set of multicores,
\ie multigraphs with minimum degree at least $2$,
and set
\[
    \mcore(z,w,u) =
    \sum_{\text{multicore $G$}} u^{\ld(G)} \frac{w^{m(G)}}{2^{m(G)} m(G)!} \frac{z^{n(G)}}{n(G)!},
\]
where $\ld(G)$ denotes the number of loops and double edges in $G$.
According to Lemma~\ref{th:multigraphs_to_graphs},
we have $\core(z,w) = \mcore(z,w,0)$.
To express the generating function of multicores, 
the inclusion-exclusion method (see \cite[Section III.7.4]{FS09})
advises us to consider $\mcore(z,w,u+1)$ instead.
This is the generating function of the set $\mcore^{\star}$ of multicores
where each loop and double edge is either marked by $u$ or left unmarked.
The set of marked loops and double edges form, by definition, a patchwork.
One can cut each unmarked edge into two labeled half-edges.
Observe that the degree constraint implies
that each vertex outside the patchwork contains at least two half-edges.
Reversely, as illustrated in Figure~\ref{fig:multicores}, 
any multicore from $\mcore^{\star}$ can be uniquely build following the steps:
\begin{enumerate}
\item
start with a patchwork $P$, which will be the final set of marked loops and double edges,
\item
add a set of isolated vertices,
\item
add to each vertex a set of labeled half-edges,
such that each isolated vertex receives at least two of them.
The total number of half-edges must be even, and is denoted by $2m$,
\item
add to the patchwork the $m$ edges obtained by
linking the half-edges with consecutive labels ($1$ with $2$, $3$ with $4$ and so on).
\end{enumerate}
\begin{figure}[htbp]
  \begin{center}
    \includegraphics[scale=0.9]{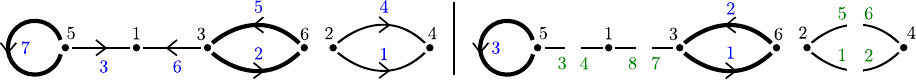}
    \caption{Left, a multigraph from $\mcore^{\star}$ (the marked loops and double edges are bold).
        Right, the corresponding multigraph with labeled half-edges, build in step $3$ of the proof of Theorem~\ref{th:core}.}
    \label{fig:multicores}
  \end{center}
\end{figure}
Observe that a relabeling of the vertices (\resp the edges) occurs at step $2$ (\resp $4$).
This construction implies, by application of the species theory (\cite{BLL97})
or the symbolic method (\cite{FS09}),
the generating function relation
\[
    \mcore(z,w,u+1) =
    \sum_{m \geq 0} (2m)! [x^{2m}] \patch(z e^x,w,u) e^{z (e^x-1-x)} \frac{w^m}{2^m m!},
\]
where
\begin{itemize}
\item
the variable $x$ marks the half-edges,
\item
$\patch(z e^x,w,u)$ is the generating function of patchworks
where a set of half-edges is added to each vertex,
\item
$e^{z (e^x-1-x)}$ is a set of vertex,
to each of which a set of at least $2$ half-edges is attached,
\item
the product by $\frac{w^m}{2^m m!}$ and the coefficient extraction $(2m)! [x^{2m}]$
represent the replacement of the $2m$ half-edges with $m$ edges.
\end{itemize}
For $u=-1$, applying Lemma~\ref{th:multigraphs_to_graphs},
we obtain the expression of $\core(z,w) = \mcore(z,w,0)$.

The end of the proof of the proposition is the same as for Proposition~\ref{th:mcorepos}.
After developing the exponential as a sum over $n$
and applying the change of variable $m \leftarrow k+n$, we obtain
\[
    \core(z,w) =
    \sum_{k \geq 0}
    [x^{2k}]
    \patch(z e^x,w,-1)
    \sum_{n \geq 0}
    (2(k+n)-1)!!
    \frac{\big(z w \frac{e^x-1-x}{x^2} \big)^n}{n!}
    w^k.
\]
The sum over $n$ is replaced by its closed form
\[
    \core(z,w) =
    \sum_{k \geq 0}
    [x^{2k}]
    \patch(z e^x,w,-1)
    \frac{(2k-1)!!}{\big(1- z w \frac{e^x-1-x}{x^2/2} \big)^{k+1/2}}
    w^k.
\]
We now decompose the generating function of patchworks
according to their excess
\[
    \patch(z,w,u) =
    \sum_{\ell \geq 0} \patch_k(z w,u) w^\ell.
\]
The generating function of cores of excess $k$ is then
\[
    \core_k(z) = [y^k] \core(z/y,y) =
    [y^k]
    \sum_{\ell \geq 0}
    (2\ell-1)!!
    [x^{2\ell}]
    \frac{\sum_{j \geq 0} \patch_j(z,u) y^{j}}{\big(1- z \frac{e^x-1-x}{x^2/2} \big)^{\ell+1/2}}
    y^\ell,
\]
and the coefficient extraction $[y^k]$ gives the result
\[
    \core_k(z) =
    \sum_{\ell = 0}^k
    (2(k-\ell)-1)!!
    [x^{2(k-\ell)}]
    \frac{\patch_{\ell}(z e^x,-1)}{\big(1-z \frac{e^x-1-x}{x^2/2}\big)^{k-\ell+1/2}}.
\]
\end{proof}

In the previous proof,
the inclusion-exclusion technique was applied
to enumerate multigraphs where loops and double edges subgraphs are forbidden.
Other subgraphs could be removed as well,
using the same approach.
This topic has been investigated by \cite{EdPCGGR17}.
We now apply the previous results to analyze positive graphs.
Again, two expressions are derived:
one suited to the analysis of constant excesses,
the other one to large excesses.

\begin{proposition} \label{th:sgpos}
The generating function of positive graphs of excess $k$
has the following two expressions
\begin{align*}
    \sgpos_k(z)
    &=
    \frac{\sk_k(T(z))}{(1-T(z))^{3k}},
    \\    
    \sgpos_k(z)
    &=
    \sum_{\ell = 0}^k 
    (2(k-\ell)-1)!!
    [x^{2k}] C_{\ell}(z,x)
    B(z,x)^k,
\end{align*}
where the auxiliary functions $B(z,x)$ and $C_{\ell}(z,x)$ and the polynomial $\sk_k(T)$ are equal to
\begin{align*}
    B(z,x) &= \left(1 - T(z) \frac{e^x-1-x}{x^2/2} \right)^{-1},
    \\
    C_{\ell}(z,x) &=
    x^{2\ell} \pospatch_{\ell}(T(z) e^x,-1) \sqrt{1-T(z)}
    e^{- T(z)\frac{e^x-1}{2}  - T(z)^2 \frac{e^{2x} - 1}{4}}
    B(z,x)^{-\ell+1/2},
    \\
    \sk_k(T) &=
    (1-T)^{3k}
    \sum_{\ell = 0}^k
    (2(k-\ell)-1)!!
    [x^{2(k-\ell)}]
    \frac{\pospatch_{\ell}(T e^x,-1) e^{- T \frac{e^x-1}{2} - T^2 \frac{e^{2x}-1}{4}}}
    {(1-T)^{k-\ell} \big(1- \frac{T}{1-T} \frac{e^x-1-x-x^2/2}{x^2/2}\big)^{k-\ell+1/2}}
\end{align*}
and $\pospatch_{\ell}(T)$ is a polynomial expressed in Section~\ref{sec:explicite_patchworks}.
\end{proposition}

\begin{proof}
In a core, the components of nonpositive excess
are the isolated cycles.
In a multigraph, such a cycle could be
of length $1$ (a loop),
or of length $2$ (a double edge).
This is forbidden in simple graphs,
so cycles have minimum length $3$,
and their generating function is
\[
    \sum_{n \geq 3}
    \frac{(n-1)!}{2} w^n \frac{z^n}{n!}
    =
    \frac{1}{2} \log \left( \frac{1}{1-z w} \right)
    - \frac{w z}{2} - \frac{w^2 z^2}{4}.
\]
The univariate generating function of isolated cycles
is obtained by setting $w$ to $1$
\[
    \frac{1}{2} \log \left( \frac{1}{1-z} \right) - \frac{z}{2} - \frac{z^2}{4}.
\]
A core of excess $k$ is a positive core of excess $k$
with a set of isolated cycles (which have excess $0$), so
\[
    \core_k(z)
    =
    \poscore_k(z) e^{\frac{1}{2} \log \left( \frac{1}{1-z} \right) - \frac{z}{2} - \frac{z^2}{4}}.
\]
Injecting the expression of $\core_k(z)$ from Proposition~\ref{th:core}
and solving this expression, we obtain
\[
    \poscore_k(z) =
    \sum_{\ell = 0}^k
    (2(k-\ell)-1)!!
    [x^{2(k-\ell)}]
    \frac{\patch_{\ell}(z e^x,-1) \sqrt{1-z} e^{\frac{z}{2} + \frac{z^2}{4}}}
    {\big(1-z \frac{e^x-1-x}{x^2/2}\big)^{k-\ell+1/2}}.
\]
A positive graph is a positive core,
where a rooted tree has been attached to each vertex.
This operation does not affect the excess,
as the same number of vertices and edges is added, so
\[
    \sgpos_k(z) =
    \poscore_k(T(z)) =
    \sum_{\ell = 0}^k
    (2(k-\ell)-1)!!
    [x^{2(k-\ell)}]
    \frac{\patch_{\ell}(T(z) e^x,-1) \sqrt{1-T(z)} e^{\frac{T(z)}{2} + \frac{T(z)^2}{4}}}
    {\big(1-T(z) \frac{e^x-1-x}{x^2/2}\big)^{k-\ell+1/2}}.
\]

To prove the first result,
we replace $1-T(z) \frac{e^x-1-x}{x^2/2}$ with
$(1-T(z)) \left( 1 - \frac{T(z)}{1-T(z)} \frac{e^x-1-x-x^2/2}{x^2/2} \right)$,
and $\patch_{\ell}(z,u)$ with $\pospatch_{\ell}(z,u) e^{u \frac{z}{2} + u \frac{z^2}{4}}$
(see Lemma~\ref{th:patchworks})
\[
    \sgpos_k(z) =
    \sum_{\ell = 0}^k
    (2(k-\ell)-1)!!
    [x^{2(k-\ell)}]
    \frac{\pospatch_{\ell}(T(z) e^x,-1) e^{- T(z) \frac{e^x-1}{2} - T(z)^2 \frac{e^{2x}-1}{4}}}
    {(1-T(z))^{k-\ell} \big(1- \frac{T(z)}{1-T(z)} \frac{e^x-1-x-x^2/2}{x^2/2}\big)^{k-\ell+1/2}}.
\]
Let us rewrite this expression as
\[
    \sgpos_k(z) = P \left( T(z), \frac{1}{1-T(z)} \right),
\]
where
\[
    P(T,S) =
    \sum_{\ell = 0}^k
    (2(k-\ell)-1)!!
    [x^{2(k-\ell)}]
    \frac{S^{k-\ell} \pospatch_{\ell}(T e^x,-1) e^{- T \frac{e^x-1}{2} - T^2 \frac{e^{2x}-1}{4}}}
    {\big(1- T S \frac{e^x-1-x-x^2/2}{x^2/2}\big)^{k-\ell+1/2}}.
\]
Since $\frac{e^x-1-x-x^2/2}{x^2/2}$, $e^x-1$ and $e^{2x}-1$ have valuation $1$,
and since $\pospatch_{\ell}$ is a polynomial,
$P(T,S)$ is a multinomial in the variables $T$ and $S$,
of degree at most $3k$ in $S$.
This implies that
\[
    \sk_k(T) = (1-T)^{3k} P\left( T, \frac{1}{1-T} \right)
\]
is a polynomial in $T$, and
\[
    \sgpos_k(z) = \frac{\sk_k(T(z))}{(1-T(z))^{3k}}.
\]
\end{proof}

We have proven that, for any fixed $k$,
the generating function of positive graphs of excess $k$
is a rational function in $T(z)$.
A more combinatorial proof is provided in Section~\ref{sec:kernels}.
The second expression of $\sgpos_k(z)$
involves a sum for $\ell$ from $0$ to $k$,
which is unbounded when $k$ goes to infinity.
Hence, although each summand is amenable to asymptotic analysis
using the saddle-point lemma~\ref{th:large_powers},
the asymptotics of the coefficients of the sum is not immediately available.
The next lemma establishes that
only the first terms of this sum
have a nonnegligible contribution
to the coefficients.

\begin{lemma} \label{th:IE}
Let $A(z)$ denote a series of radius of convergence greater than $\zeta$,
then
\[
    n! [z^n] \sgpos_k(z) A(z) =
    n!
    \sum_{\ell=0}^{d}
    (2(k-\ell)-1)!! [z^n x^{2k}]
    A(z)
    C_{\ell}(z,x) B(z,x)^k
    +
    \bigO\left(n! (2(k-d)-3)!! \frac{B(\zeta, \lambda)^k}{\zeta^n \lambda^{2k}} \right)
\]
where the auxiliary function $C_{\ell}(z,x)$
is defined in Proposition~\ref{th:sgpos}.
\end{lemma}

\begin{proof}
We first present the proof in the particular case $A(z) = 1$.
%
Let $\IEd$ denote the \emph{bounded inclusion-exclusion} operator,
which inputs a multigraph family $\mF$, and outputs the value
\[
    \IEd(\mF) =
    \sum_{G \in \mF} \sum_{\substack{P \subset \LD(G)\\ k(P) \leq d}} (-1)^{|P|},
\]
where the second sum is taken over all patchworks
contained in the multigraph $G$ and of excess at most $d$.
In this proof, let $\mgpos_{n,k}$ denote the set (instead of the number)
of positive multigraphs with $n$ vertices and excess $k$.
Let also $\mgpos_{n,k, \leq d}$ (\resp $\mgpos_{n,k, > d}$)
denote the set of positive multigraphs with $n$ vertices and excess $k$,
where the set of loops and double edges form a patchwork
of excess at most $d$ (\resp greater than $d$).
By definition, we have
\[
    \mgpos_{n,k} = \mgpos_{n,k, \leq d} \uplus \mgpos_{n,k, > d}.
\]
Applying the bounded inclusion-exclusion operator to this relation, we obtain
\begin{equation} \label{eq:ied}
    \IEd(\mgpos_{n,k}) = \IEd(\mgpos_{n,k, \leq d}) + \IEd(\mgpos_{n,k, > d}).
\end{equation}
We now derive expressions for the first two terms, and bound the third.
\\ \noindent \textbf{First term.}
\[
    \sum_{G \in \mgpos_{n,k,\leq d}} \sum_{\substack{P \subset \LD(G)\\ k(P) \leq d}} u^{|P|}
\]
is the generating function
of multigraphs with $n$ vertices and excess $k$,
where each loop and double edge is either marked by the variable $u$,
or left unmarked, and the marked loops and double edges
form a patchwork of excess at most $d$.
Following the proof of Proposition~\ref{th:sgpos}, this quantity is equal to
\[
    n!
    2^{n+k} (n+k)!
    \sum_{\ell=0}^{d}
    (2(k-\ell)-1)!! [z^n x^{2(k-\ell)}]
    \patch_{\ell}(T(z) e^x,u) \sqrt{1-T(z)}
    e^{\frac{T(z)}{2} + \frac{T(z)^2}{4}}
    B(z,x)^{k-\ell+1/2}.
\]
For $u=-1$, we obtain
\[
    \IEd(\mgpos_{n,k})
    =
    n! 2^{n+k} (n+k)! (2k-1)!!
    \sum_{\ell=0}^{d}
    \frac{(2(k-\ell)-1)!!}{(2k-1)!!} [z^n x^{2k}]
    C_{\ell}(z,x) B(z,x)^k.
\]
\noindent \textbf{Second term.}
By definition, this term is equal to
\[
    \IEd(\mgpos_{n,k,\leq d}) =
    \sum_{G \in \mgpos_{n,k,\leq d}} \sum_{\substack{P \subset \LD(G)\\ k(P) \leq d}} (-1)^{|P|}.
\]
Since the multigraphs from $\mgpos_{n,k,\leq d}$
contain only patchworks of excess at most $d$,
the second condition of the second sum is redundant.
Applying the classical relation $\sum_{A \subset B} x^{|A|} = (1+x)^{|B|}$,
valid for any finite set $B$,
we obtain
\[
    \IEd(\mgpos_{n,k,\leq d}) = \sum_{G \in \mgpos_{n,k,\leq d}} 0^{|\LD(G)|}.
\]
The summand vanishes, unless $\LD(G)$ is empty, because $0^0 = 1$.
According to Lemma~\ref{th:multigraphs_to_graphs},
the number of multigraphs from $\mgpos_{n,k}$
without loops and double edges
is equal to $2^{n+k} (n+k)!$ times
the number $\sgpos_{n,k}$ of positive graphs
with $n$ vertices and excess $k$, so
\[
    \IEd(\mgpos_{n,k,\leq d}) = 2^{n+k} (n+k)! \sgpos_{n,k}.
\]
Injecting the expressions of the first and second term into Equation~\eqref{eq:ied}, we obtain
\[
    2^{n+k} (n+k)!
    \bigg| \sgpos_{n,k} -
    n! (2k-1)!!
    \sum_{\ell=0}^{d}
    \frac{(2(k-\ell)-1)!!}{(2k-1)!!} [z^n x^{2k}]
    C_{\ell}(z,x) B(z,x)^k
    \bigg|
    \leq
    \big| \IEd(\mgpos_{n,k, > d}) \big|.
\]
Thus, the proof of the lemma is complete
once we establish the existence of a constant $D$,
that depends only on $d$ an not on $n$ or $k$, such that
\[
    \IEd(\mgpos_{n,k, > d}) \leq
    D n! 2^{n+k} (n+k)! (2(k-d)-3)!! \frac{B(\zeta, \lambda)^k}{\zeta^n \lambda^{2k}}.
\]
\noindent \textbf{Third term.}
The triangle inequality implies the following bound
\[
    \IEd(\mgpos_{n,k,> d})
    \leq
    \sum_{G \in \mgpos_{n,k,>d}} \sum_{\substack{P \subset \LD(G)\\ k(P) \leq d}} |(-1)^{|P|}|
    \leq
    \sum_{G \in \mgpos_{n,k,>d}} \big| \{P \subset \LD(G)\ |\ k(P) \leq d\} \big|
\]
Any multigraph from $\mgpos_{n,k, > d}$
contains a positive patchwork of excess exactly $d+1$.
Thus, $\IEd(\mgpos_{n,k, > d})$ is bounded
by the number of positive multigraphs
with $n$ vertices and excess $k$,
where one positive patchwork $P$ of excess $d+1$ is distinguished,
and a patchwork $Q$ (not necessarily disjoint of $P$)
of excess at most $d$ is distinguished as well.
A simpler upper bound follows from considering that $P$ and $Q$ are disjoint,
but each loop and double edge from $P$ brings a factor $2$,
to take into account that it might belong, or not, to $Q$.
Following the proof of Proposition~\ref{th:sgpos}, this bound is equal to
\begin{align*}
    n!
    2^{n+k} (n+k)!
    \sum_{\ell=0}^{d}
    &
    (2(k-\ell-d-1)-1)!! [z^n x^{2(k-\ell-d-1)}]
    \pospatch_{d+1}(T(z) e^x,2)
    \\
    & \times
    \patch_{\ell}(T(z) e^x, 1) \sqrt{1-T(z)}
    e^{\frac{T(z)}{2} + \frac{T(z)^2}{4}}
    B(z,x)^{k-\ell-d-1+1/2}.
\end{align*}
According to Lemma~\ref{th:saddle_point_bound},
for any fixed $d$, there is a constant $D$ this quantity is bounded by
\[
    D
    n! 2^{n+k} (n+k)!
    (2(k-d)-3)!!
    \frac{B(\zeta, \lambda)^k}{\zeta^n \lambda^{2k}}.
\]
Therefore, we have proven the inequality
\begin{equation} \label{eq:IE_bound}
    \bigg|
    [z^n] \sgpos(z)
    - \sum_{\ell=0}^d
    (2(k-\ell)-1)!! [z^n x^{2k}] C_{\ell}(z,x) B(z,x)^k
    \bigg|
    \leq
    D (2(k-d)-3)!! \frac{B(\zeta,\lambda)^k}{\zeta^n \lambda^{2k}}.
\end{equation}
We now turn to the proof of the general case of a function
\[
    A(z) = \sum_{r \geq 0} a_r z^r
\]
of radius of convergence greater than $\zeta$.
We have
\[
    [z^n] \sgpos_k(z) A(z) =
    \sum_{r=0}^n a_r [z^{n-r}] \sgpos_k(z),
\]
and
\[
    \sum_{\ell=0}^{d}
    (2(k-\ell)-1)!! [z^n x^{2k}]
    A(z)
    C_{\ell}(z,x) B(z,x)^k
    =
    \sum_{\ell=0}^{d}
    \sum_{r=0}^n a_r (2(k-\ell)-1)!! [z^{n-r} x^{2k}]
    C_{\ell}(z,x) B(z,x)^k.
\]
Applying the triangle inequality
and the bound from Equation~\eqref{eq:IE_bound},
we conclude
\[
    \bigg|
    [z^n] \sgpos_k(z) A(z)
    -
    \sum_{\ell=0}^{d}
    (2(k-\ell)-1)!! [z^n x^{2k}]
    A(z)
    C_{\ell}(z,x) B(z,x)^k
    \bigg|
    \leq
    \sum_{r=0}^n
    |a_r| D (2(k-d)-3)!! \frac{B(\zeta, \lambda)^k}{\zeta^{n-r} \lambda^{2k}}.
\]
In the right hand-side, we apply the fact that
$A(z)$ has radius of convergence greater than $\zeta$,
so
$
    \sum_{r \geq 0} |a_r| \zeta^r
$
is a convergent series.
Thus, the right hand-side is a
$
    \bigO \left((2(k-d)-3)!! \frac{B(\zeta, \lambda)^k}{\zeta^{n} \lambda^{2k}} \right).
$
Multiplying by $n!$ concludes the proof.
\end{proof}

\cite{ER60} proved that when $k = \exactbigO(n)$,
a typical random positive graph
with $n$ vertices and excess $k$ is connected.
This implies the asymptotic equivalence
\[
    \csg_{n,k} \sim \sgpos_{n,k},
\]
used by \cite{PW05}.
The last lemma provides the asymptotics
\[
    \sgpos_{n,k}
    = n! [z^n] \sgpos_k(z)
    \sim n! (2k-1)!! [z^n x^{2k}]
    C_{0}(z,x) B(z,x)^k.
\]
If we were only interested
into the first order of the asymptotics of connected graphs,
we would apply Lemma~\ref{th:mgpos_asymptotics}
to derive the asymptotics of $\sgpos_{n,k}$,
and hence of $\csg_{n,k}$.
The next proposition recalls the link between
the generating functions of connected graphs
and of positive graphs.

\begin{proposition} \label{th:csg_exact}
For any positive $k$,
the generating function of connected graphs of excess $k$
is equal to
\[
    \csg_k(z) =
    [y^k] \log \bigg(
    1 + \sum_{\ell \geq 1} \sgpos_{\ell}(z) y^{\ell}
    \bigg).
\]
\end{proposition}

\begin{proof}
This is the exact same proof as for Proposition~\ref{th:cmg_exact}.
\end{proof}

Following the proof we did for the asymptotics of connected multigraphs,
we now apply Lemma~\ref{th:bender_like}
to express, up to a negligible term,
the number of connected graphs
using the generating function of positive graphs.

\begin{lemma} \label{th:csg_bender_applied}
Consider a positive integer $d$,
and two integers $n$, $k$ going to infinity
such that $\alpha := k/n$ stays in a closed interval $\mK$ of $\reals_{>0}$.
Then the number of connected graphs
with $n$ vertices and excess $k$
is equal to
\[
    \csg_{n,k} =
    n! [z^n]
    \sum_{r=0}^{d}
    \sgpos_{k-r}(z)
    \frac{S_r(T(z))}{(1-T(z))^{3r}}
    + \bigO \left( n! (2(k-d)-3)!! \frac{B(\zeta, \lambda)^k}{\zeta^n \lambda^{2k}} \right),
\]
where $\zeta$, $\lambda$ are defined as in Lemma~\ref{th:mgpos_asymptotics},
the auxiliary polynomial $S_r(T)$ and the series $B(z,x)$ are equal to
\begin{align*}
    B(z,x) &= \left(1-T(z) \frac{e^x-1-x}{x^2/2}\right)^{-1},
    \\
    S_r(T) &= [y^r] \bigg(1 + \sum_{\ell=1}^r \sk_{\ell}(T) y^{\ell} \bigg)^{-1},
\end{align*}
and $\sk_{\ell}(T)$ is defined in Proposition~\ref{th:sgpos}.
\end{lemma}

\begin{proof}
We start with the result of Proposition~\ref{th:csg_exact}
\[
    \csg_k(z) = [y^k] \log \bigg( 1 + \sum_{\ell \geq 1} \sgpos_{\ell}(z) y^{\ell} \bigg),
\]
and we plan to apply Lemma~\ref{th:bender_like}, with
$g_{\ell}(z) = \sgpos_{\ell}(z)$, $\beta = 1/2$, and $E = 2 B(\zeta, \lambda)/\lambda^2$.
In order to do so, we need to bound the value $\sgpos_{\ell}(\zeta)$.
Any positive graph with $n$ vertices and excess $\ell$
(hence with $n+\ell$ edges)
matches $2^{n+\ell} (n+\ell)!$ positive multigraphs
with $n$ vertices and excess $\ell$,
obtained by orienting and labeling the edges of the graphs.
Thus, denoting by $\sgpos_{n,\ell}$ (\resp $\mgpos_{n,\ell}$)
the number of such positive graphs (\resp multigraphs),
we have
\[
    \sgpos_{n,\ell} \leq \frac{\mgpos_{n,\ell}}{2^{n+\ell} (n+\ell)!}.
\]
This implies the following inequality between the generating functions
evaluated at the positive value $\zeta$
\[
    \sgpos_{\ell}(\zeta)
    =
    \sum_{n \geq 0} \sgpos_{n,\ell} \frac{\zeta^n}{n!}
    \leq
    \sum_{n \geq 0} \frac{\mgpos_{n,\ell}}{2^{n+\ell} (n+\ell)!} \frac{\zeta^n}{n!}
    =
    \mgpos_{\ell}(\zeta).
\]
The expression $\mgpos_{\ell}(z)$ was derived in Proposition~\ref{th:mgpos}
\[
    \mgpos_{\ell}(z) = (2\ell-1)!! [x^{2\ell}] \sqrt{1-T(z)} B(z,x)^{\ell+1/2}.
\]
Using the identity
$
    (2\ell-1)!! = \frac{2^{\ell}}{\sqrt{\pi}} \Gamma(\ell + 1/2)
$
and Lemma~\ref{th:saddle_point_bound}, we conclude
\[
    \sgpos_{\ell}(\zeta) \leq
    \frac{2^{\ell}}{\sqrt{\pi}}
    \Gamma(\ell+1/2)
    \frac{\sqrt{1-T(\zeta)} B(\zeta, \lambda)^{\ell+1/2}}{\lambda^{2\ell}}.
\]
Thus, the hypothesis of Lemma~\ref{th:bender_like} are satisfied,
and it implies
\[
    \csg_{n,k} =
    n! [z^n]
    \sum_{r=0}^{d}
    \sgpos_{k-r}(z)
    [y^r] \bigg(1 + \sum_{\ell = 1}^r \sgpos_{\ell}(z) y^{\ell} \bigg)^{-1}
    + \bigO \left( n! (2(k-d)-3)!! \frac{B(\zeta, \lambda)^k}{\zeta^n \lambda^{2k}} \right).
\]
In Proposition~\ref{th:sgpos}, we obtained the expression
\[
    \sgpos_{\ell}(z) = \frac{\sk_{\ell}(T(z))}{(1-T(z))^{3\ell}},
\]
which implies
\[
    [y^r] \bigg(1 + \sum_{\ell = 1}^r \sgpos_{\ell}(z) y^{\ell} \bigg)^{-1}
    =
    \frac{S_r(T(z))}{(1-T(z))^{3r}},
\]
and the result of the lemma follows.
\end{proof}

\begin{theorem} \label{th:csg_asymptotics}
Consider a positive integer $d$,
and two integers $n$, $k$ going to infinity
such that $\alpha := k/n$ stays in a closed interval $\mK$ of $\reals_{>0}$.
Then the number of connected graphs
with $n$ vertices and excess $k$
has the following asymptotic expansion,
uniformly for $\alpha$ in $\mK$
\[
    \csg_{n,k} =
    n! (2k-1)!!
    \frac{B(\zeta, \lambda)^k}{2 \pi k \zeta^n \lambda^{2k}}
    \bigg(
    \sum_{r=0}^{d-1}
    c_r k^{-r}
    + \bigO(k^{-d})
    \bigg),
\]
where $\lambda$ is the unique positive solution of the equation
\[
    \frac{\lambda}{2} \frac{e^{\lambda}+1}{e^{\lambda}-1} = \alpha + 1,
\]
the values of $B(\zeta, \lambda)$ and $\zeta$ are equal to
\[
    B(\zeta, \lambda) = \frac{\lambda}{2 \alpha},
    \quad
    \zeta = e^{-\alpha-1} \sqrt{(\alpha+1)^2 - (\lambda/2)^2},
\]
the first constant $c_0$ is equal to
\[
    c_0 =
    e^{- \frac{\lambda}{2} (\alpha+2)}
    \frac{\alpha(\lambda/2-\alpha)}{\sqrt{\lambda^2/2 - 2 \alpha^2 - 2 \alpha}},
\]
and the expression of the other $(c_r)$ is given in the proof.
Each of $c_r$, $\lambda$ and $\zeta$ is a smooth function of $\alpha$.
\end{theorem}

\begin{proof}
We start with the result of Lemma~\ref{th:csg_bender_applied}
\[
    \csg_{n,k} =
    n! [z^n]
    \sum_{r=0}^{d}
    \sgpos_{k-r}(z)
    \frac{S_r(T(z))}{(1-T(z))^{3r}}
    + \bigO \left( n! (2(k-d)-3)!! \frac{B(\zeta, \lambda)^k}{\zeta^n \lambda^{2k}} \right),
\]
and apply Lemma~\ref{th:IE}, so $\csg_{n,k}$ is equal to
\[
    n!
    \sum_{r=0}^{d}
    \sum_{\ell=0}^{d-r}
    (2(k-r-\ell)-1)!!
    [z^n x^{2(k-r)}]
    \frac{S_r(T(z))}{(1-T(z))^{3r}}
    C_{\ell}(z,x)
    B(z,x)^k
    + \bigO \left( n! (2(k-d)-3)!! \frac{B(\zeta, \lambda)^k}{\zeta^n \lambda^{2k}} \right).
\]
Dividing and multiplying by $(2k-1)!!$,
and replacing the coefficient extraction
$[x^{2(k-r)}]$ with $[x^{2k}] x^{2r}$,
the expression becomes 
\begin{equation} \label{eq:csg_asymptotics}
    \csg_{n,k} =
    n! (2k-1)!!
    \bigg(
    \sum_{r=0}^{d}
    \sum_{\ell=0}^{d-r}
    \frac{(2(k-r-\ell)-1)!!}{(2k-1)!!}
    [z^n x^{2k}]
    A_{r,\ell}(z,x)
    B(z,x)^k
    + \bigO \left( k^{-d-1} \frac{B(\zeta, \lambda)^k}{\zeta^n \lambda^{2k}} \right)
    \bigg),
\end{equation}
where $A_{r,\ell}(z,x) = x^{2r} \frac{S_r(T(z))}{(1-T(z))^{3r}} C_{\ell}(z,x)$.
The asymptotic expansion of the quotient of double factorials
has been derived in the proof of Theorem~\ref{th:cmg_asymptotics},
and is equal to
\[
    \frac{(2(k-r-\ell)-1)!!}{(2k-1)!!}
    =
    \sum_{t=0}^{d-1}
    a_{r+\ell,t} k^{-t}
    + \bigO(k^{-d}),
\]
where each $a_{r,t}$ is equal to
\[
    a_{r,t} =
    2^{-t}
    \sum_{t_0 + \cdots + t_{r-1} = t-r}
    \prod_{s=0}^{r-1}
    (2s+1)^{t_s}.
\]
We now turn to the asymptotic expansion of the coefficient extraction
$[z^n x^{2k}] A_{r,\ell}(z,x) B(z,x)^k$,
applying Lemma~\ref{th:mgpos_asymptotics}.
With the notations $\psi(x,y)$ and $J_{\psi}(x,y)$
from this lemma,
we have the following asymptotic expansion
\[
    [z^n x^{2k}] A_{r,\ell}(z,x) B(z,x)^k =
    \frac{B(\zeta, \lambda)^k}{2 \pi k \zeta^n \lambda^{2k}}
    \bigg(
    \sum_{s=0}^{d-1}
    b_{r,\ell,s} k^{-s}
    + \bigO(k^{-d})
    \bigg),
\]
where each $b_{r,\ell,s}$ is computed using the formula
\[
    b_{r,\ell,s} =
    \sum_{a=0}^s
    (2a-1)!! (2(s-a)-1)!!
    [x^{2a} y^{2(s-a)}]
    A_{r,\ell}(\zeta e^{i \psi_1(x,y)}, \lambda e^{i \psi_2(x,y)})
    \det(J_{\psi}(x,y)).
\]
Injecting the asymptotic expansions of $\frac{(2(k-r-\ell)-1)!!}{(2k-1)!!}$
and $[z^n x^{2k}] A_{r,\ell}(z,x) B(z,x)^k$ into Equation~\eqref{eq:csg_asymptotics},
we obtain for $\csg_{n,k}$
\[
    n! (2k-1)!!
    \bigg(
    \sum_{r=0}^{d}
    \sum_{\ell=0}^{d-r}
    \bigg(
    \sum_{t=0}^{d-1} a_{r+\ell,t} k^{-t} + \bigO(k^{-d})
    \bigg)
    \frac{B(\zeta,\lambda)^k}{2 \pi k \zeta^n \lambda^{2k}}
    \bigg(
    \sum_{s=0}^{d-1} b_{r,\ell,s} k^{-s} + \bigO(k^{-d})
    \bigg)
    + \bigO \left(k^{-d-1} \frac{B(\zeta,\lambda)^k}{\zeta^n \lambda^{2k}} \right)
    \bigg),
\]
which reduces to
\[
    \csg_{n,k} =
    n! (2k-1)!!
    \frac{B(\zeta, \lambda)^k}{2 \pi k \zeta^n \lambda^{2k}}
    \bigg(
    \sum_{s=0}^{d-1}
    c_s k^{-s}
    + \bigO(k^{-d})
    \bigg),
\]
where
\[
    c_s =
    \sum_{r=0}^{d}
    \sum_{\ell=0}^{d-r}
    \sum_{t=0}^s
    a_{r+\ell,s-t}
    b_{r,\ell,t}.
\]
Since $s < d$ and $a_{r+\ell,s-t}$ vanishes when $s-t$ is smaller than $r+\ell$,
the coefficient $c_s$ is equal to
\[
    c_s =
    \sum_{t=0}^s
    \sum_{r=0}^{s-t}
    \sum_{\ell=0}^{s-t-r}
    a_{r+\ell,s-t}
    b_{r,\ell,t}.
\]
In particular, $c_0 = \sum_{r=0}^d a_{r,0} b_{r,0,0}$.
Since $a_{r,0}$ vanishes unless $r=0$,
we have $c_0 = b_{0,0,0}$,
which is equal to
\[
    b_{0,0,0} =
    A_{0,0}(\zeta, \lambda)
    \sqrt{
    \frac{\alpha^3}{\lambda}
    \frac{\lambda/2-\alpha}{\lambda^2/4 - \alpha^2 - \alpha}}
\]
according to Lemma~\ref{th:large_powers}.
The expression of $A_{0,0}(\zeta, \lambda)$ is,
using the notation $C_{\ell}(z,x)$ from Proposition~\ref{th:sgpos},
\[
    A_{0,0}(\zeta, \lambda) =
    C_0(\zeta,\lambda) B(\zeta, \lambda)^{1/2} =
    e^{- T(\zeta) \frac{e^{\lambda}-1}{2} - T(\zeta)^2 \frac{e^{2 \lambda}-1}{4}}
    \sqrt{(1-T(\zeta) B(\zeta,\lambda)}.
\]
Injecting the values
\[
    e^{\lambda} = \frac{\alpha+1+\frac{\lambda}{2}}{\alpha+1-\frac{\lambda}{2}},
    \qquad
    T(\zeta) = \alpha+1-\frac{\lambda}{2},
    \qquad
    B(\zeta,\lambda) = \frac{\lambda}{2\alpha}
\]
derived from the characterizations of $\lambda$ and $\zeta$,
we obtain
\[
    - T(\zeta) \frac{e^{\lambda}-1}{2} - T(\zeta)^2 \frac{e^{2 \lambda}-1}{4}
    =
    - \frac{\lambda}{2} (\alpha+2)
\]
and
\[
    c_0 = b_{0,0,0} =
    e^{- \frac{\lambda}{2} (\alpha+2)}
    \frac{\alpha(\lambda/2-\alpha)}{\sqrt{\lambda^2/2 - 2 \alpha^2 - 2 \alpha}}.
\]
\end{proof}

Comparing the dominant terms of the asymptotics of connected graphs
with the one obtained in Theorem~\ref{th:cmg_asymptotics} on connected multigraphs,
we conclude that the asymptotic probability for a random
multigraph with $n$ vertices and excess $k$,
with $\alpha = k/n$ in a closed interval of $\reals_{>0}$,
to contain neither loops nor multiple edges
is $e^{-\frac{\lambda}{2} (\alpha + 2)}$.

Step by step instructions for the computation of the coefficients $(c_r)$
are provided in Appendix~\ref{sec:computation}.

\section{Connected graphs and multigraphs with fixed excess}
\label{sec:fixed_k}

In the first subsection, we study the link between positive graphs and multigraphs,
and \emph{kernels}, which are multigraphs with minimum degree at least $3$.
%
In the second subsection, the asymptotics of connected graphs of multigraphs
of fixed excess is derived.
This result was first obtained by \cite{W80},
and our contribution is to present a different formula
for the constant term.

    \subsection{Positive (multi)graphs and kernels} \label{sec:kernels}

In Propositions~\ref{th:mgpos} and~\ref{th:sgpos},
we proved that the generating functions
of positive graphs and multigraphs of excess $k$
are rational functions in $T(z)$.
In this subsection, we provide, for the sake of completeness,
the combinatorial proof for this property
first derived by \cite{W80}.
The proof is based on the reduction of positive multigraphs and graphs
to their \emph{kernels},
which are multigraphs with minimum degree at least $3$,
and is illustrated in Figure~\ref{fig:path_of_trees}.

\begin{lemma} \label{th:finite_kernels}
There is a finite number of kernels of excess $k$.
Furthermore, such kernels contain at most $2k$ vertices, and $3k$ edges.
\end{lemma}

\begin{proof}
Consider a kernel with $n$ vertices, $m$ edges, and excess $k = m-n$.
The sum of the degrees of a multigraph is equal to twice its number of edges.
Since each degree is at least $3$, we conclude
\[
    3n \leq \sum_{v \in V(G)} \deg(v) = 2m,
\]
which implies $n \leq 2k$ and $m \leq 3k$.
\end{proof}

The previous lemma implies that the generating function $\kernel_k(z)$
of kernels of excess $k$ is a polynomial.

\begin{proposition}[already contained in Proposition~\ref{th:mgpos}] \label{th:mgpos_kernels}
For each positive integer $k$,
there is a polynomial $\mk_k(T)$ of degree $2k$
such that the generating function of positive multigraphs of excess $k$
is equal to
\[
    \mgpos_k(z) = \frac{\mk_k(T(z))}{(1-T(z))^{3k}}.
\]
\end{proposition}

\begin{proof}
We have seen that removing the vertices of degree $1$
in a positive multigraphs produces a positive multicore.
If furthermore the pairs of edges sharing a vertex of degree $2$
are merged to form only one edge,
and this operation is applied until there are
no more vertices of degree $2$,
a kernel $K$ is obtained
(see Figure~\ref{fig:path_of_trees}).
During this process, observe that
the numbers of vertices and edges removed are equal,
so the excess of the multigraph and of its kernel are the same.
The removed vertices can be divided into two groups:
\begin{itemize}
\item
a set of trees, rooted at the vertices of the kernel,
\item
a set of \emph{paths of trees}, defined below,
which replace the edges of the kernel.
\end{itemize}
A path of trees is as a sequence
\[
    (e_0, T_1, e_1, T_2, e_2, \ldots, T_t, e_{t}),
\]
where each $e_i$ is an edge label,
and each $T_i$ is a rooted tree.
An equivalent formulation is that a path of trees
is an unrooted tree, where two distinct vertices
have been ordered and removed,
while the edges linked to those two vertices are preserved.
A path of trees can be decomposed as an edge followed
by a sequence of pairs (rooted tree, edge),
\[
    (e_0, (T_1, e_1), (T_2, e_2), \ldots, (T_t, e_{t}))
\]
so the univariate generating function of paths of trees is
\[
    P(z) = \frac{1}{1-T(z)},
\]
where $T(z)$ denotes the univariate generating function of rooted trees.
As illustrated in Figure~\ref{fig:path_of_trees},
each positive multigraph of excess $k$
can be decomposed as a kernel of excess $k$,
where each edge is replaced by a path of trees,
and each vertex by a rooted tree
(specifically, the $r$th path of tree
replaces the kernel's edge of label $r$).
This implies
\[
    \mgpos_k(z) =
    \sum_{G \in \kernel_k}
    \frac{P(z)^{n(G) + k}}{2^{n(G) + k} (n(G)+k)!}
    \frac{T(z)^{n(G)}}{n(G)!}.
\]
Injecting the expression of the generating function
of paths of trees, this expression becomes
\[
    \mgpos_k(z) =
    \frac{1}{(1-T(z))^k}
    \sum_{G \in \kernel_k}
    \frac{1}{2^{n(G) + k} (n(G)+k)!}
    \frac{\left( \frac{T(z)}{1-T(z)} \right)^{n(G)}}{n(G)!}
    =
    \frac{\kernel_k \left( \frac{T(z)}{1-T(z)} \right)}{(1-T(z))^k}.
\]
Since $\kernel_k(T)$ is a polynomial of degree $2k$,
this last expression is equal to
\[
    \mgpos_k(z) =
    \frac{\mk_k(T(z))}{(1-T(z))^{3k}}
\]
if we set $\mk_k(T) = (1-T)^{2k} \kernel_k \left( \frac{T}{1-T} \right)$.
\end{proof}

\begin{figure}
\begin{center}
\includegraphics[scale=0.8]{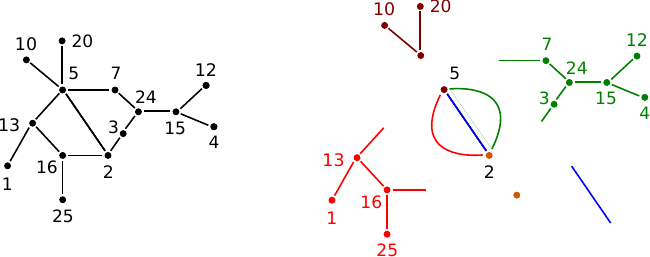}
\caption{For the sake of the clarity of the figure,
the orientations and labels of the edges
have been omitted.
Decomposition of a positive multigraph as a kernel,
where a rooted tree is attached to each vertex,
and each edge is replaced by a path of trees.
The rooted trees might have size $1$
(\eg orange tree attached to the vertex $2$),
and the path of tree might be reduced to one edge
(\eg the blue edge linking the vertices $2$ and $5$).}
\label{fig:path_of_trees}
\end{center}
\end{figure}

If we start with a graph,
remove the vertices of degree $0$ and $1$,
and merge into one edge each pair of edges sharing a vertex of degree $2$,
the result might contain loops and multiple edges,
as illustrated in Figure~\ref{fig:path_of_trees}.
When expressing the generating function of positive graphs
using the generating function of kernels,
we have to be careful about those loops and multiple edges,
since they are forbidden in graphs.
Given a multigraph $G$ and an integer $s \geq 2$,
let us define an \emph{induced $s$-multiple edge}
as a set of $s$ edges linking the same two distinct vertices,
and such that $G$ contains no other
edge linking those two vertices.
For any $s \geq 2$, the number of $s$-multiple edges of $G$ is denoted by $m_s(G)$.
The number of loops in $G$ in denoted by $\ell(G)$.
In this section, we add to the generating function of kernels
variables to mark the number of loops and induced $s$-multiple edges
\[
    \kernel(z,w,\nu,(\omega_s)_{s \geq 2}) =
    \sum_{G \in \kernel}
    \nu^{\ell(G)}
    \bigg( \prod_{s \geq 2} \omega_s^{m_s(G)} \bigg)
    \frac{w^{m(G)}}{2^{m(G)} m(G)!}
    \frac{z^{n(G)}}{n(G)!}.
\]
As usual, the generating function
of the corresponding family of excess $k$ is defined as
\[
    \kernel_k(z,\nu,(\omega_s)_{s \geq 2}) =
    [y^k] \kernel(z/y,y,\nu,(\omega_s)_{s \geq 2}).
\]

\begin{proposition}[already contained in Proposition~\ref{th:sgpos}] \label{th:sgpos_kernels}
For each positive $k$,
there is a polynomial $\sk_k(T)$
such that the generating function of positive graphs of excess $k$ is
\[
    \sgpos_k(z) = \frac{\sk(T(z))}{(1-T(z))^{3k}}.
\]
\end{proposition}

\begin{proof}
We use the same decomposition as in Lemma~\ref{th:mgpos_kernels}.
A positive graph of excess $k$ is a kernel of excess $k$,
where each edge is replaced by a path of trees,
and a rooted tree is added to each vertex.
The generating function of paths of trees
has been derived in the proof of Lemma~\ref{th:mgpos_kernels}
\[
    P(z) = \frac{1}{1-T(z)},
\]
and the generating functions of nonempty paths of trees
and paths of trees containing at least $2$ trees are
\[
    P^{>0}(z) = P(z) - 1 = \frac{T(z)}{1-T(z)},
    \qquad
    P^{>1}(z) = P(z) - 1 - T(z) = \frac{T(z)^2}{1-T(z)}.
\]
To ensure that the loops and double edges from the kernels
are not present in the positive graph,
the following conditions are added:
\begin{itemize}
\item
each loop is replaced by a path of trees containing at least $2$ trees,
\item
in each $s$-multiple edges, at least $s-1$ edges
are replaced by nonempty paths of trees.
\end{itemize}
This implies that the generating function $\sgpos_k(z)$ is equal to
\[
    \sum_{G \in \kernel_k}
    P^{>1}(z)^{\ell(G)}
    \bigg( \prod_{s \geq 2} \left(P^{>0}(z)^s + s P^{>0}(z)^{s-1}\right)^{m_s(G)} \bigg)
    \frac{P(z)^{n(G) + k - \ell(G) - \sum_{s \geq 2} m_s(G) s}}{2^{n(G) + k} (n(G)+k)!}
    \frac{T(z)^{n(G)}}{n(G)!},
\]
where $n(G) + k - \ell(G) - \sum_{s \geq 2} m_s(G) s$
is the number of edges from $G$
that are neither loops, nor multiple edges.
Injecting the expressions of the generating functions of paths of trees
into the previous equations gives
\[
    \sgpos_k(z) =
    \sum_{G \in \kernel_k}
    T(z)^{2 \ell(G)}
    \bigg( \prod_{s \geq 2} \left(T(z)^s + s T(z)^{s-1} (1-T(z)) \right)^{m_s(G)} \bigg)
    \frac{(1-T(z))^{-n(G) - k}}{2^{n(G) + k} (n(G)+k)!}
    \frac{T(z)^{n(G)}}{n(G)!},
\]
which is equal to
\[
    \sgpos_k(z) =
    \frac{\kernel_k \left(
        \frac{T(z)}{1-T(z)},
        T(z)^2,
        \left( T(z)^s + s T(z)^{s-1} (1-T(z)) \right)_{s \geq 2}
    \right)}
    {(1-T(z))^k}.
\]
According to Lemma~\ref{th:finite_kernels},
there is a finite number of kernels of excess $k$,
and they contain at most $2k$ vertices.
Thus, $\kernel(z,w,\nu,(\omega_s)_{s \geq 2})$ is a multinomial
of degree at most $2k$ in $z$.
Therefore,
\[
    (1-T)^{2k}
    \kernel_k \left(
        \frac{T}{1-T},
        T^2,
        \left( T^s + s T^{s-1} (1-T) \right)_{s \geq 2}
    \right)
\]
is a polynomial with respect to $T$, denoted by $\sk_k(T)$, and
\[
    \sgpos_k(z) = \frac{\sk(T(z))}{(1-T(z))^{3k}}.
\]
\end{proof}

    \subsection{Asymptotics of connected graphs and multigraphs with fixed excess}

A positive graph (or multigraph) is connected
if and only if its kernel is connected.
Thus, for any positive $k$,
the last two results of the previous section are also valid
for the generating functions of connected graphs (or multigraphs) of excess $k$.

\begin{proposition} \label{th:cskcmk}
For any positive integer $k$,
there exist polynomials $\csk_k(T)$ and $\cmk_k(T)$
equal to
\[
    \csk_k(T)
    =
    [y^k] \log \bigg( 1 + \sum_{\ell \geq 1} \sk_{\ell}(T) y^{\ell} \bigg),
    \qquad
    \cmk_k(T)
    =
    [y^k] \log \bigg( 1 + \sum_{\ell \geq 1} \mk_{\ell}(T) y^{\ell} \bigg),
\]
where the formulas for the polynomials $\sk_{\ell}(T)$ and $\mk_{\ell}(T)$
are provided in Propositions~\ref{th:sgpos} and~\ref{th:mgpos},
such that the generating function of connected graphs of excess $k$
and connected multigraphs of excess $k$ are equal to
\[
    \csg_k(z)
    =
    \frac{\csk_k(T(z))}{(1-T(z))^{3k}},
    \qquad
    \cmg_k(z)
    =
    \frac{\cmk_k(T(z))}{(1-T(z))^{3k}}.
\]
\end{proposition}

\begin{proof}
We start with the result of Proposition~\ref{th:csg_exact}
\[
    \csg_k(z) =
    [y^k] \log \bigg(
    1 + \sum_{\ell \geq 1} \sgpos_{\ell}(z) y^{\ell}
    \bigg).
\]
Applying Proposition~\ref{th:sgpos_kernels} to expression $\sgpos_{\ell}(z)$,
we obtain
\[
    \csg_k(z) =
    [y^k] \log \bigg(
    1 + \sum_{\ell \geq 1} \frac{\sk_{\ell}(T(z))}{(1-T(z))^{3\ell}} y^{\ell}
    \bigg)
\]
which, after the change of variable $y \mapsto (1-T(z)) y$, becomes
\[
    \csg_k(z) =
    \frac{[y^k] \log \bigg(
    1 + \sum_{\ell \geq 1} \sk_{\ell}(T(z)) y^{\ell}
    \bigg)}{(1-T(z))^{3k}}.
\]
The same proof applies to multigraphs,
starting with the result of Proposition~\ref{th:cmg_exact}
\[
    \cmg_k(z) =
    [y^k] \log \bigg(
    1 + \sum_{\ell \geq 1} \mgpos_{\ell}(z) y^{\ell}
    \bigg).
\]
\end{proof}

\cite{W80} proved that the generating function of connected graphs
of fixed excess is a rational function in $T(z)$.
He also derived a differential recurrence
characterizing the sequence of polynomials $(\csk_k(T))_k$.
This formula was the starting point of the proof of \cite{BCM90}
for the asymptotics of connected graphs,
that covers the case where the excess grows linearly with the number of vertices.
Our contribution here is to provide a new direct expression for those polynomials.
Let us recall here the chain of formulas applied to compute them.
\begin{itemize}
\item
The polynomial $\csk_k(T)$ is expressed in Proposition~\ref{th:cskcmk},
using the polynomials $(\sk_{\ell}(T))$, as
\[
    \csk_k(T)
    =
    [y^k] \log \bigg( 1 + \sum_{\ell \geq 1} \sk_{\ell}(T) y^{\ell} \bigg).
\]
\item
The polynomial $\sk_k(T)$ is expressed in Proposition~\ref{th:sgpos},
using the polynomials $(\pospatch_{\ell}(z,-1))$, as
\[
    \sk_k(T) =
    (1-T)^{3k}
    \sum_{\ell = 0}^k
    (2(k-\ell)-1)!!
    [x^{2(k-\ell)}]
    \frac{\pospatch_{\ell}(T e^x,-1) e^{- T \frac{e^x-1}{2} - T^2 \frac{e^{2x}-1}{4}}}
    {(1-T)^{k-\ell} \big(1- \frac{T}{1-T} \frac{e^x-1-x-x^2/2}{x^2/2}\big)^{k-\ell+1/2}}.
\]
\item
The formula for the polynomial $\pospatch_k(z,-1)$ is provided by Lemma~\ref{th:pospatch}
\[
    \pospatch_k(z,-1) =
    \sum_{n=0}^{3k}
    p_{n,k} z^n,
\]
where each coefficient $p_{n,k}$ has the following expression
\[
    p_{n,k} =
    \sum_{\ell,r,s,t}
    \binom{\binom{\ell}{2}}{k+\ell+t-r}
    \binom{s}{n-\ell-s-t}
    \frac{(-1)^{r+t} \ell^{2r}}{\ell! r! s! t! 2^{n+r-\ell-t}}.
\]
\end{itemize}
Applying classical tools from analytic combinatorics,
the asymptotics of connected graphs and multigraphs
with fixed excess is derived in the next theorem.

\begin{theorem}
The asymptotic numbers of connected graphs and multigraphs
with $n$ vertices and a fixed positive excess $k$ are
\begin{align*}
    \csg_{n,k} &=
    \frac{\sqrt{2 \pi} \csk_k(1)}{2^{3k/2} \Gamma(3k/2)}
    n^{n + 3k/2 - 1/2}
    \big( 1 + \bigO(n^{-1/2}) \big),
    \\
    \cmg_{n,k} &=
    \frac{2 \pi \cmk_k(1)}{\Gamma(3k/2)}
    \frac{2^{n-k/2}}{e^n}
    n^{2n + 5k/2}
    \big( 1 + \bigO(n^{-1/2}) \big),
\end{align*}
where the polynomials $\csk_k(T)$ and $\cmk_k(T)$ are defined in Proposition~\ref{th:cskcmk}.
\end{theorem}

\begin{proof}
It is a classic result (see \eg \cite{FS09}) that
the Cayley tree function $T(z)$ is analytic in the domain
\[
    \{z\ |\ |z| < R,\ z \neq 1/e,\ |\arg(z-1/e)| > \phi\}
\]
for some $R > 1/e$ and $0 < \phi < \pi/2$,
and has the following approximation
\[
    T(z) = 1 - \sqrt{2} \sqrt{1-ez} + \bigO(1-ez).
\]
Injecting this approximation into the expression
of the generating function of connected graphs from Proposition~\ref{th:cskcmk},
we obtain
\[
    \csg_k(z) = \frac{\csk_k(1)}{(2 (1-ez))^{3k/2}} \left( 1 + \bigO(\sqrt{1-ez})\right).
\]
Applying the singularity analysis \cite[Theorems VI.2 and VI.3]{FS09},
the asymptotics of the coefficient is extracted
\[
    \csg_{n,k} = n! [z^n] \csg_k(z) =
    n! \csk_k(1) \frac{e^n}{2^{3k/2}} \frac{n^{3k/2-1}}{\Gamma(3k/2)}
    \left( 1 + \bigO(n^{-1/2}) \right).
\]
Applying Stirling's formula to replace $n!$ with $n^n e^{-n} \sqrt{2 \pi n}$
concludes the proof for connected graphs.

The number of connected multigraphs with $n$ vertices and excess $k$ is
\[
    \cmg_{n,k} = n! 2^{n+k} (n+k)! [z^n] \frac{\mk_k(T(z))}{(1-T(z))^{3k}}.
\]
Applying the same proof as for graphs, we obtain
\[
    \cmg_{n,k} = n! 2^{n+k} (n+k)!
    \cmk_k(1) \frac{e^n}{2^{3k/2}} \frac{n^{3k/2-1}}{\Gamma(3k/2)}
    \left( 1 + \bigO(n^{-1/2}) \right).
\]
Again, an application of Stirling formula gives, for fixed $k$,
\[
    2^{n+k} (n+k)! \sim
    2^{n+k} n^{n+k} e^{-n} \sqrt{2 \pi n},
\]
and concludes the proof.
\end{proof}

A complete asymptotic expansion could be derived as well:
first compute more terms in the Newton-Puiseux expansion of the Cayley tree function
\[
    T(z) = \sum_{j = 0}^{d-1} e_j (1-ez)^{j/2} + \bigO(1-ez)^d,
\]
then inject them in the expressions from Proposition~\ref{th:cskcmk},
and apply a singularity analysis to derive $d$ terms
of the asymptotic expansion, in increasing powers of $n^{-1/2}$.
An alternative approach for the computation of
the complete asymptotic expansion of connected graphs with fixed excess
is provided by \cite{FSS04}.

\section{Conclusion}
\label{sec:conclusion}

\cite{JKLP93} and \cite{FSS04} started their analysis of multigraphs and graphs
with the expression of their generating functions
\begin{align*}
    \mg(z,w)
    &= \sum_{n \geq 0} e^{w n^2/2} \frac{z^n}{n!},
    \\
    \sg(z,w)
    &=
    \sum_{n \geq 0}
    (1+w)^{\binom{n}{2}}
    \frac{z^n}{n!}.
\end{align*}
As already mentioned, any graph (resp.\ multigraph) can be uniquely decomposed as
a set of trees, and a core (resp.\ multicore) where vertices are replaced by rooted trees.
If the manipulations of generating functions with negative exponents were following the same rules
as their nonnegative counterparts,
this decomposition would translate into the following generating function relations
\begin{align*}
    \mg(z/y,y)
    =
    \sum_{n \geq 0} e^{y n^2/2} \frac{(z/y)^n}{n!}
    &=
    e^{y^{-1} U(z)}
    \sum_{k \geq 0}
    (2k-1)!! [x^{2k}] \frac{y^k}{\left( 1 - T(z) \frac{e^x-1-x}{x^2/2} \right)^{k+1/2}},
    \\
    \sg(z/y,y)
    =
    \sum_{n \geq 0}
    (1+y)^{\binom{n}{2}}
    \frac{(z/y)^n}{n!}
    &=
    e^{y^{-1} U(z)}
    \sum_{k \geq 0}
    \sum_{\ell=0}^k
    (2(k-\ell)-1)!! [x^{2(k-\ell)}] \frac{P_{\ell}(T(z),-1) y^k}{\left( 1 - T(z) \frac{e^x-1-x}{x^2/2} \right)^{k-\ell+1/2}}.
\end{align*}
The contribution of trees is then clearly separated from the rest,
contrary to the first expressions.
Those two relations might be of mathematical interest on their own
(should their domain of validity be defined).
It would be interesting to prove them analytically,
continuing the work of \cite{FSS04}.

    \paragraph{Acknowledgments.}

We thank the two anonymous referees for their attentive proofreading of the paper.
Their helpful remarks improved the structure and clarity of the paper,
and they corrected errors in the formulation of the asymptotics.
We also thank Michael Borinsky for his helpful suggestions
on the asymptotic expansion of the coefficients
of divergent series (Appendix~\ref{sec:divergent_series}),
and Adrien Sauvaget and Nathana\"el Fijalkow
for the proof of Lemma~\ref{th:morse}.

\bibliographystyle{abbrvnat}
\bibliography{/home/elie/research/articles/bibliography/biblio}
\appendix

\section{Saddle-point method}
\label{sec:saddle_point}

In this section, vectors are denoted using bold letters, such as $\vz$.
We use the classical notation
\[
    \vz^{\vm} = \prod_j z_j^{m_j},
\]
where the vectors $\vz$ and $\vm$
are assumed to have the same length.

The main result of this section is Lemma~\ref{th:appendix_mgpos_asymptotics}.
It is a corollary of Theorem~\ref{th:large_powers},
which provides a complete asymptotic expansion
for coefficient extractions of the form
\[
    [\vz^{\vm}] A(\vz) B(\vz)^n
\]
when the coefficients of the vector $\vm$
go to infinity linearly with $n$.
This theorem is an application of the results from Section~5 of \cite{PW13},
and some classical properties on analytic functions
(Morse and Daffodil Lemmas).

    \subsection{Preliminaries} \label{sec:preliminaries}

Let us first recall the Morse Lemma,
which can be found in the book \cite{PW13} for example.

\begin{lemma}
When $\phi(\vx)$ is a function analytic in a neighborhood $\mC$ of $\vzero$,
such that $\phi(\vzero) = 0$, $\partial_{x_j} \phi(\vzero) = 0$ for all $j$
and the Hessian matrix $\mH_{\phi}(\vzero)$ of $\phi$ at $\vzero$ is nonsingular,
then there exists a biholomorphic change of variable $\vx = \psi(\vy)$
that maps $\mC$ to a neighborhood of $\vzero$,
and such that
\[
    \phi(\psi(\vy)) = \frac{1}{2} \sum_{j=1}^t y_j^2.
\]
The Jacobian matrix $J_{\psi}(\vy)$ of $\psi$
is linked to the Hessian matrix of $\phi$ by the relation
\[
    \det(J_{\psi}(\vzero))^2 = | \det( \mH_{\phi}(\vzero) ) |^{-1}.
\]
\end{lemma}

We will need the following parameterized generalization.
The first part of the proof follows the one of \cite{PW13}.
We thank Adrien Sauvaget and Nathanaël Fijalkow for their help
on the proof of the second part.

\begin{lemma} \label{th:morse}
When $\phi(\vx, \vlambda)$ is a function analytic for $\vx$ a neighborhood $\mC \subset \reals^t$ of $\vzero$,
and $\vlambda$ in a compact set $\mK$,
such that for all $\vlambda$ in $\mK$,
\begin{itemize}
\item
$\phi(\vzero, \vlambda) = 0$,
\item
$\partial_{x_j} \phi(\vzero, \vlambda) = 0$ for all $j$,
\item
and the Hessian matrix $\mH_{\phi}(\vzero, \vlambda)$ of $\vx \mapsto \phi(\vx, \vlambda)$ at $\vzero$ is nonsingular,
\end{itemize}
then there exists a function $\psi(\vy, \vlambda)$
analytic for $\vy $ in a neighborhood of $\vzero$ and $\vlambda$ in $\mK$,
such that for each $\vlambda$ in $\mK$,
$\vx = \psi(\vy, \vlambda)$ is a biholomorphic change of variable 
that maps $\mC$ to a neighborhood of $\vzero$,
and such that
\[
    \phi(\psi(\vy,\vlambda),\vlambda) = \frac{1}{2} \sum_{j=1}^t y_j^2.
\]
The Jacobian matrix $J_{\psi}(\vy,\vlambda)$ of $\vx \mapsto \phi(\vx, \vlambda)$
is linked to the Hessian matrix of $\vx \mapsto \phi(\vx, \vlambda)$ by the relation
\[
    \det(J_{\psi}(\vzero,\vlambda))^2 = | \det( \mH_{\phi}(\vzero,\vlambda) ) |^{-1}.
\]
Furthermore, for any neighborhood $U$ of $\vzero$,
there is a neighborhood $V$ of $\vzero$
such that for all $\vlambda$ in $\mK$,
we have $\psi(V, \vlambda) \subset U$.
\end{lemma}

\begin{proof}
The first step is to construct analytic functions
$(\phi_{j,k}(\vx, \vlambda))_{1 \leq j, k \leq t}$ such that
\begin{equation} \label{eq:def_phi_j_k}
  \phi(\vx, \vlambda) = \sum_{1 \leq j, k \leq t} x_j x_k \phi_{j,k}(\vx, \vlambda).
\end{equation}
To do so, we set for all $1 \leq j, k \leq t$
\[
  x_j x_k \phi_{j,k}(\vx, \vlambda) =
  \sum_{r_1 + \cdots + r_t \geq 2}
  \frac{r_j (r_k - \delta_{j,k})}{(r_1 + \cdots + r_t) (r_1 + \cdots + r_t - 1)}
  [\vy^{\vr}] \phi(\vy, \vlambda)
  \vx^{\vr},
\]
where $\delta_{j,k}$ is equal to $1$ if $j=k$, and to $0$ otherwise.
Observe that the right hand-side is divisible by $x_j x_k$,
so $\phi_{j,k}(\vx, \vlambda)$ is indeed analytic,
and for all $j$, $k$, we have $\phi_{j,k} = \phi_{k,j}$.
Furthermore, since
\[
  \sum_{1 \leq j, k \leq t}
  \frac{r_j (r_k - \delta_{j,k})}{(r_1 + \cdots + r_t) (r_1 + \cdots + r_t - 1)}
  = 1,
\]
Equation~\eqref{eq:def_phi_j_k} is satisfied.
Observe that Equation~\eqref{eq:def_phi_j_k} implies
\[
  \phi_{j,k}(\vzero, \vlambda)  = \frac{1}{2} \mH_{j,k}(\vzero, \vlambda)
\]
for all $j$, $k$.

The second step of the proof is an induction.
Let us assume that $\phi_{j,j}(\vzero, \vlambda)$
does not vanish for any $\vlambda$ from $\mK$ and $1 \leq j \leq t$.
Then $\phi_{1,1}(\vx, \vlambda)^{-1}$
and a branch of $\phi_{1,1}(\vx, \vlambda)^{1/2}$ are analytic.
Set
\[
  y_1 :=
  \sqrt{2 \phi_{1,1}(\vx, \vlambda)}
  \bigg(
  x_1 + \sum_{k = 2}^t \frac{x_k \phi_{1,k}(\vx, \vlambda)}{\phi_{1,1}(\vx, \vlambda)}
  \bigg).
\]
Expanding the square of the right side,
we see that $y_1^2 / 2$ and $\phi(\vx, \vlambda)$
agree on all terms of total degree at most one in $x_2, \ldots, x_t$.
Thus, there are analytic functions $h_{j,k}(\vx, \vlambda)$ such that
\[
  \phi(\vx, \vlambda) =
  \frac{1}{2} y_1^2 + \sum_{j,k \geq 2} x_j x_k h_{j,k}(\vx, \vlambda),
\]
with $h_{j,k}(\vzero, \vlambda) = \mH_{j,k}(\vzero, \vlambda) / 2$.
Similarly, if
\[
  \phi(\vx, \vlambda) =
  \frac{1}{2} \sum_{j=1}^{r-1} y_j^2
  + \sum_{j,k \geq r} x_j x_k h_{j,k}(\vx, \vlambda),
\]
then setting
\[
  y_r :=
  \sqrt{2 \phi_{r,r}(\vx, \vlambda)}
  \bigg(
  x_r + \sum_{k = r+1}^t \frac{x_k h_{r,k}(\vx, \vlambda)}{h_{r,r}(\vx, \vlambda)}
  \bigg)
\]
gives
\[
  \phi(\vx, \vlambda) =
  \frac{1}{2} \sum_{j=1}^{r} y_j^2
  + \sum_{j,k \geq r+1} x_j x_k \tilde{h}_{j,k}(\vx, \vlambda)
\]
for some analytic functions $\tilde{h}_{j,k}(\vx, \vlambda)$.
By induction, we arrive at
\[
  \phi(\vx, \vlambda) = \frac{1}{2} \sum_{j=1}^t y_j^2,
\]
finishing the proof of the first two assertions of the lemma in the case where $\mH_{j,j}(\vzero, \vlambda)$
does not vanish for any $\lambda$ in $\mK$.

If some $\mH_{j,j}(\vzero, \vlambda)$ vanishes,
because $\mH(\vzero, \vlambda)$ is nonsingular,
we may always find some unitary map $U(\vlambda)$
such that the Hessian
\[
  U(\vlambda)^T \mH(\vzero, \vlambda) U(\vlambda)
\]
of $\phi(U(\vlambda) \vx, \vlambda)$ has no vanishing diagonal entries.
We know there is a $\psi_0$ such that
\[
  \phi(U(\vlambda) \psi_0(\vy, \vlambda), \vlambda) =
  \frac{1}{2} \sum_{j=1}^t y_j^2,
\]
and taking $\psi(\vy, \vlambda) = U(\vlambda) \psi_0(\vy, \vlambda)$
finishes the construction of $\psi$ in this case.

Now, let us prove the last assertion of the Lemma.
Let $d \psi(\vzero,\vlambda)$ denote the differential
of $\vy \mapsto \psi(\vy, \vlambda)$ at $\vzero$.
It is nonsingular for all $\vlambda \in \mK$.
Since $\psi$ is analytic, the function
\[
  \vlambda \mapsto
  \sup_{\|\vx\| \leq 1}
  \frac{\| d\psi(\vzero, \vlambda) \cdot \vx \|}{\|\vx\|}
\]
is continuous on the compact $\mK$,
so it has a maximum $M$. 
The Taylor expansion of $\vy \mapsto \psi(\vy, \vlambda)$ at the origin is
\begin{equation} \label{eq:taylorpsi}
  \psi(\vy, \vlambda) =
  \psi(\vzero, \vlambda) +
  d\psi(\vzero, \vlambda) \cdot \vy + R(\vy, \vlambda),
\end{equation}
where $R(\vy, \vlambda)$ denotes the rest of the expansion.
This rest has the following bound
\[
  \|R(\vy, \vlambda)\| \leq
  c_{\vlambda} \|\vy\|^2,
\]
where $c_{\vlambda}$ is a linear combination
of the second derivatives of $\vy \mapsto \psi(\vy, \vlambda)$.
Since $\vlambda$ stays in the compact set $\mK$,
the constant
\[
  C = \sup_{\vlambda \in \mK} c_{\vlambda}
\]
is well defined.
Injecting this bound in Equation~\eqref{eq:taylorpsi},
and the relation $\psi(\vzero, \vlambda) = \vzero$,
we obtain
\[
  \| \psi(\vy, \vlambda) \|
  \leq
  \| d \psi(\vzero, \vlambda) \cdot \vy \| + C \|\vy\|^2
  \leq
  M \|\vy\| + C \|\vy\|^2,
\]
where the right hand-side is independent of $\vlambda$.
Let $r > 0$ be the radius of an open ball
centered at the origin and contained in $U$.
Then, according to the previous inequality,
for any $\vx$ satisfying
\[
  \| \vx \| < \min \left(1, \frac{r}{M + C} \right),
\]
we have $\|\psi(\vx, \vlambda)\| < r$,
so $\psi(\vx, \vlambda) \in U$.
Thus, defining $V$ as the open ball
of radius $\min \left(1, \frac{r}{M + C} \right)$
and centered at the origin,
we have, for all $\vlambda$ in $\mK$,
$\psi(V, \vlambda) \subset U$.
\end{proof}

Our next lemma provides an effective way to locate
the maximum of the absolute value of a multivariate analytic function
with nonnegative coefficients at the origin.
It is inspired by the Daffodil Lemma (Lemma~IV.1 from \cite{FS09}).
The \emph{support} $\supp(B)$ of a function $B(\vz)$ of $t$ variables
is the set of exponent vectors of nonzero monomials
in its series expansion at the origin
\[
    \supp(B) = \{\vn \in \naturals^t\ |\ [\vz^{\vn}] B(\vz) \neq 0\}.
\]
We say that the support \emph{spans the identity matrix}
if it contains $2t$ vectors $\vn_1, \ldots, \vn_{2t}$
(not necessarily distinct) such that the matrix
\[
    \begin{pmatrix}
    \vn_1 - \vn_2\\
    \vn_3 - \vn_4\\
    \vdots\\
    \vn_{2t-1} - \vn_{2t}
    \end{pmatrix}
\]
is the identity matrix.

\begin{lemma} \label{th:multivariate_daffodil}
Let $B(\vz)$ denote a function of $t$ variables,
analytic in a neighborhood $\mC$ of the origin,
with nonnegative coefficients at the origin,
and which support spans the identity matrix.
Then on any closed torus $\mathcal{T}$ of radii $\vzeta$ contained in $\mC$,
\[
    \mathcal{T} = \{ \vz \in \complex^t\ |\ \forall j,\ |z_j| \leq \zeta_j\},
\]
$|B(\vz)|$ reaches its unique maximum at the point $\vzeta$.
\end{lemma}

\begin{proof}
Consider a vector $\vz$ in the torus of radii $\vzeta$,
then, by the triangle inequality,
\[
    |B(\vz)| =
    \bigg|
    \sum_{\vn \in \supp(B)} b_{\vn} \vz^{\vn}
    \bigg|
    \leq
    \sum_{\vn \in \supp(B)} b_{\vn} |\vz^{\vn}|
    \leq
    \sum_{\vn \in \supp(B)} b_{\vn} \vzeta^{\vn}
    =
    B(\vzeta).
\]
Now assume that $|B(\vz)| = B(\vzeta)$, so
\[
    \bigg|
    \sum_{\vn \in \supp(B)} b_{\vn} \vz^{\vn}
    \bigg|
    =
    \sum_{\vn \in \supp(B)} b_{\vn} \vzeta^{\vn}.
\]
According to the strong triangle inequality,
this implies $|\vz^{\vn}| = \vzeta^{\vn}$
for all $\vn$ in $\supp(B)$,
and the complex numbers $\vz^{\vn}$ are aligned.
Since there exist $2t$ vectors $\vn_1, \ldots, \vn_{2t}$ in $\supp(B)$
that span the identity matrix,
for any $1 \leq j \leq t$, there is a vector $\vn$ in $\supp(B)$
with a positive $j$th coefficient $n_j > 0$.
Since $\vz$ is in $\mathcal{T}$, we have
\[
    |\vz^{\vn}|
    =
    \prod_{k=1}^t |z_k|^{n_k}
    \leq
    |z_j|^{n_j} \prod_{k \neq j} \zeta_k^{n_k}.
\]
Since $|\vz^{\vn}| = \vzeta^{\vn}$, this implies $|z_j| = \zeta_j$,
and this holds for any $1 \leq j \leq t$.
Thus, $\vz$ is on the boundary of $\mathcal{T}$.
Let $\vtheta$ denote the vector $(\arg(z_1), \ldots, \arg(z_t))$,
where $\arg(z)$ is the principal argument of $z$.
The alignment implies that for all $\vn$, $\vm$ in $\supp(B)$,
$\vn \vtheta$ is equal to $\vm \vtheta$ modulo $2\pi$.
Thus, we have
\[
    \begin{pmatrix}
    \vn_1 - \vn_2\\
    \vn_3 - \vn_4\\
    \vdots\\
    \vn_{2t-1} - \vn_{2t}
    \end{pmatrix}
    \vtheta
    = 0 \mod 2 \pi.
\]
The matrix on the left side is the identity matrix,
and $\vtheta$ has its coefficients in $[0,2\pi[$,
so the previous relation implies $\vtheta = \vzero$.
Thus $\vz = \vzeta$, and $B$ reaches its unique maximum at $\vzeta$.
\end{proof}

    \subsection{Laplace-Fourier integrals and Large Powers Theorem}

The following Lemma is a combination
of Theorems $5.1.1$ and $5.1.2$ from \cite{PW13},
and of their proofs.
Given an integer vector $\vr = (r_1, \ldots, r_t)$,
we introduce the notation
\[
    (2 \vr - 1)!! = \prod_{j=1}^t (2 r_j - 1)!! =
    \prod_{j=1}^t \frac{(2r_j)!}{2^{r_j} r_j!}.
\]

\begin{lemma}
Consider a compact neighborhood $\mN$ of $\vzero$ in $\reals^t$,
and two analytic functions $A(\vx)$ and $\phi(\vx)$ on $\mN$.
Assume $A(\vzero) \neq 0$,
$\phi(\vzero) = 0$, $\partial_j \phi(\vzero) = 0$ for all $1 \leq j \leq t$,
and that the real part of $\phi(\vx)$
is positive on $\mN \setminus \{\vzero\}$.
Assume that the Hessian matrix $\mH_{\phi}(\vzero)$ of $\phi$ at $\vzero$ is nonsingular,
and let us use the notations $\psi$ and $J_{\psi}$ from Lemma~\ref{th:morse}.
For any integer $d$, we have the following asymptotic expansion
\[
    \int_{\mN} A(\vx) e^{- n \phi(\vx)} d \vx =
    \left( \frac{2 \pi}{n} \right)^{t/2} 
    \Bigg(
    \sum_{r=0}^{d-1}
    c_r
    n^{-r}
    + \bigO(n^{-d})
    \Bigg),
\]
where each coefficient $c_r$ is equal to
\[
    c_r =
    \sum_{\substack{r_1 + \cdots + r_t = r\\ \forall j,\ r_j \geq 0}}
    (2 \vr-1)!!
    [\vy^{2\vr}] A(\psi(\vy)) \det( J_{\psi}(\vy) ),
\]
and in particular $c_0 = A(0) |\det(\mH_{\phi}(\vzero))|^{-1/2}$.
\end{lemma}

We will need the following parameterized generalization.
The proof mainly follows the ones of Theorems $5.1.1$ and $5.1.2$
from \cite{PW13}.

\begin{lemma} \label{th:laplace_fourier}
Consider a compact neighborhood $\mN$ of $\vzero$ in $\reals^t$,
a compact set $\mK$ of some power of $\reals$,
and two analytic functions $A(\vx, \vlambda)$ and $\phi(\vx, \lambda)$ on $\mN \times \mK$.
Assume that for all $\vlambda$ in $\mK$, we have
$A(\vzero, \vlambda) \neq 0$,
$\phi(\vzero, \vlambda) = 0$, $\partial_{x_j} \phi(\vzero, \vlambda) = 0$ for all $1 \leq j \leq t$,
and that the real part of $\phi(\vx, \vlambda)$
is positive for any $\vx$ in $\mN \setminus \{\vzero\}$.
Assume that the Hessian matrix $\mH_{\phi}(\vzero, \vlambda)$ of $\vx \mapsto \phi(\vx, \vlambda)$ at $\vx = \vzero$ is nonsingular,
and let us use the notations $\psi$ and $J_{\psi}$ from Lemma~\ref{th:morse}.
For any integer $d$, we have the following asymptotic expansion,
which holds uniformly for $\vlambda$ in $\mK$
\[
    \int_{\mN} A(\vx, \vlambda) e^{- n \phi(\vx, \vlambda)} d \vx =
    \left( \frac{2 \pi}{n} \right)^{t/2} 
    \Bigg(
    \sum_{r=0}^{d-1}
    c_r(\vlambda)
    n^{-r}
    + \bigO(n^{-d})
    \Bigg),
\]
where each coefficient $c_r(\vlambda)$ is a function analytic on $\mK$, equal to
\[
    c_r(\vlambda) =
    \sum_{\substack{r_1 + \cdots + r_t = r\\ \forall j,\ r_j \geq 0}}
    (2 \vr-1)!!
    [\vy^{2\vr}] A(\psi(\vy, \vlambda), \vlambda) \det( J_{\psi}(\vy, \vlambda) ),
\]
and in particular $c_0(\vlambda) = A(0, \vlambda) |\det(\mH_{\phi}(\vzero, \vlambda))|^{-1/2}$.
\end{lemma}

\begin{proof}
Since the real part of $\phi(\vx, \vlambda)$ is positive
for any $\vlambda$ in $\mK$ and any $\vx$ in $\mN \setminus \{\vzero\}$,
and $\phi(\vzero, \vlambda) = 0$,
the function $\vx \mapsto |e^{- \phi(\vx, \vlambda)}|$
reaches its maximum $1$ only at $\vx = \vzero$.
Thus, we can find a small enough neighborhood $\mM$ of the origin and a value $0 < \delta < 1$
such that
\begin{itemize}
\item
for all $\vlambda$ in $\mK$ and $\vx$ in $\mN \setminus \mM$, we have $|e^{- \phi(\vx, \vlambda)}| < \delta$,
\item
the change of variable $\vx = \psi(\vy, \vlambda)$ is applicable on $\mM$ and send it to a neighborhood $\mL_{\vlambda}$ of the origin,
\end{itemize}
so
\[
  \int_{\mN} A(\vx, \vlambda) e^{- n \phi(\vx, \vlambda)} d \vx =
  \int_{\mM} A(\vx, \vlambda) e^{- n \phi(\vx, \vlambda)} d \vx
  + \int_{\mN \setminus \mM} A(\vx, \vlambda) e^{- n \phi(\vx, \vlambda)} d \vx.
\]
Since $A(\vx, \vlambda)$ is bounded on the compact set $\mN \times \mK$,
he absolute value of the second integral is bounded by
\[
  \int_{\mN \setminus \mM} \left| A(\vx, \vlambda) e^{- n \phi(\vx, \vlambda)} \right| d \vx
  = \bigO(\delta^n),
\]
which is exponentially small compared to the final result,
hence negligible in the asymptotic expansion.
The change of variable $\vx = \psi(\vy, \vlambda)$ is applied in the first integral,
which becomes
\[
  \int_{\mL_{\vlambda}} A(\psi(\vy, \vlambda), \vlambda) \det(J_{\psi}(\vy, \vlambda)) e^{- \frac{n}{2} \sum_{j=1}^t y_j^2} d \vy
\]
According to the last assertion of Lemma~\ref{th:morse},
their is a neighborhood $\mL$ of the origin such that
\[
  \mL \subset \bigcap_{\vlambda \in \mK} \mL_{\vlambda}.
\]
Up to an exponentially small additional term,
we can again reduce the domain of summation to $\mL$.
We choose $\mL$ small enough to ensure
that the following Taylor expansion holds in $\mL$
uniformly with respect to $\vlambda$ in $\mK$
\[
  A(\psi(\vy, \vlambda), \vlambda) \det(J_{\psi}(\vy, \vlambda)) =
  \sum_{k_1 + \cdots + k_t < 2 d}
  \left( [\vx^{\vk}] A(\psi(\vx, \vlambda), \vlambda) \det(J_{\psi}(\vx, \vlambda)) \right)
  \vy^{\vk}
  + \bigO \bigg(
  \sum_{k_1 + \cdots + k_t = 2 d}
  \vy^{\vk}
  \bigg).
\]
According to Lemma~\ref{th:morse}, the constant term of this expansion is equal to
$A(\vzero, \vlambda) | \det(\mH_{\phi}(\vzero, \vlambda)) |^{-1/2}$.
Injecting this expansion in the integral, we obtain for
$\int_{\mN} A(\vx, \vlambda) e^{- n \phi(\vx, \vlambda)} d \vx$
the expression
\begin{equation}
\label{eq:sum_int_laplace}
  \sum_{k_1 + \cdots + k_t < 2 d}
  [\vx^{\vk}] A(\psi(\vx, \vlambda), \vlambda) \det(J_{\psi}(\vx, \vlambda))
  \int_{\mL} \vy^{\vk} e^{- \frac{n}{2} \sum_{j=1}^t y_j^2} \vy
  +
  \int_{\mL}
  \bigO \bigg(
  \sum_{k_1 + \cdots + k_t = 2 d}
  \vy^{\vk}
  \bigg)
  e^{- \frac{n}{2} \sum_{j=1}^t y_j^2} \vy
\end{equation}
up to the addition of exponentially small terms.
On any neighborhood $U$ of $0$, when $k$ is a fixed integer and $n$ tends to infinity,
the following truncated Gaussian-like integral is equal, up to an exponentially small term, to
\[
  \int_{U} x^k e^{- \frac{n}{2} x^2} dx =
  \begin{cases}
  0 & \text{if $k$ is odd,}\\
  (2r-1)!! n^{-r} & \text{if $k = 2 r$,}
  \end{cases}
  + \text{(exponentially small term).}
\]
Thus, up to an exponentially negligible term, we have
\[
  \int_{\mL} \vy^{\vk} e^{- \frac{n}{2} \sum_{j=1}^t y_j^2} \vy =
  \begin{cases}
  0 & \text{if at least one of the $k_j$ is odd,}\\
  (2 \vr-1)!! n^{-(r_1 + \cdots + r_t)} & \text{if $\vk = 2 \vr$.}
  \end{cases}  
\]
Injecting this result in Equation~\eqref{eq:sum_int_laplace} concludes the proof.
\end{proof}

\begin{theorem} \label{th:large_powers}
Consider two fixed integers $d$ and $t$,
a compact subset $\mK$ of $\reals_{>0}^t$,
and integers $n$ and $\vm := (m_1, \ldots, m_t)$ going to infinity
such that $\vm / n$ stays in $\mK$.
Let $B(\vz)$ be a function satisfying the following conditions.
\begin{itemize}
\item
$B(\vz)$ is analytic on a closed torus $\mC$ from $\complex^t$ containing the origin.
\item
Its series expansion at the origin has nonnegative coefficients.
\item
Its support spans the identity matrix (see the definition before Lemma~\ref{th:multivariate_daffodil}).
\item
There is an analytic function $\vzeta(\vm/n) := (\zeta_1(\vm/n), \ldots, \zeta_t(\vm/n))$
from $\mK$ to $\reals_{\geq 0}^t \cap \mC$ such that
for all $1 \leq j \leq t$,
\[
    \frac{\zeta_j(\vm/n) \partial_{\zeta_j(\vm/n)} B(\vzeta(\vm/n))}{B(\vzeta(\vm/n))} = m_j/n.
\]
In the following, $\vzeta$ stands for $\vzeta(\vm/n)$.
\item
The function $\vx \mapsto \phi(\vx, \vm/n)$, defined for $\vm/n$ in $\mK$ and $\vx$ in a neighborhood of $\vzero$ from $\reals^t$ by
\[
    \phi(\vx, \vm/n) =
    - \log \left(
    \frac{B(\zeta_1 e^{i x_1}, \ldots, \zeta_t e^{i x_t})}{B(\vzeta)}
    \right)
    + i \sum_{j=1}^t \frac{m_j}{n} x_j,
\]
has an Hessian matrix $\mH_{\phi}(\vx, \vm/n)$ that is nonsingular at $\vx = \vzero$,
for all $\vm/n$ in $\mK$.
\end{itemize}
Let $\psi(\vy, \vm/n) = (\psi_1(\vy, \vm/n), \ldots, \psi_t(\vy, \vm/n))$ denote
the biholomorphic change of variable $\vx = \psi(\vy, \vm/n)$ from Lemma~\ref{th:morse}
that maps $\mC$ to a neighborhood of $\vzero$,
and such that
\[
    \phi(\psi(\vy,\vm/n), \vm/n) = \frac{1}{2} \sum_{j=1}^t y_j^2.
\]
Its Jacobian matrix is denoted by $J_{\psi}(\vy,\vm/n)$.
Let $A(\vz)$ denote a function analytic on $\mC$
that does not vanish at $\vzeta$ for any $\vm/n$ in $\mK$.
Then the following asymptotic expansion holds uniformly for $\vm/n$ in $\mK$
\[
    [\vz^{\vm}] A(\vz) B(\vz)^n =
    \frac{B(\vzeta)^n}{\vzeta^{\vm} (2 \pi n)^{t/2}}
    \Bigg(
    \sum_{r=0}^{d-1}
    c_r(\vm/n)
    n^{-r}
    + \bigO(n^{-d})
    \Bigg),
\]
where each $c_r$ is an analytic function, equal to
\[
    c_r(\vm/n) =
    \sum_{\substack{r_1 + \cdots + r_t = r\\ \forall j,\ r_j \geq 0}}
    (2 \vr - 1)!!
    [\vy^{2\vr}]
    A(\zeta_1 e^{i \psi_1(\vy)}, \ldots, \zeta_t e^{i \psi_t(\vy)}) \det( J_{\psi}(\vy, \vm/n) ),
\]
and, in particular, $c_0(\vm/n) = A(\vzeta) |\det(\mH_{\phi}(\vzero, \vm/n))|^{-1/2}$.
\end{theorem}

\begin{proof}
The coefficient extraction is expressed as a Cauchy integral
\[
    [\vz^{\vm}] A(\vz) B(\vz)^n =
    \frac{1}{(2 i \pi)^t}
    \oint
    A(\vz) \frac{B(\vz)^n}{\vz^{\vm}}
    \frac{d \vz}{z_1 \cdots z_t}.
\]
We choose for the contour of integration
the torus of radii $\vzeta$,
so $z_j = \zeta_j e^{i x_j}$ for all $1 \leq j \leq t$
\[
    [\vz^{\vm}] A(\vz) B(\vz)^n =
    \frac{1}{(2 \pi)^t}
    \int_{\vx \in ]-\pi,\pi]^t}
    A(\zeta_1 e^{i x_1}, \ldots, \zeta_t e^{i x_t})
    \frac{B(\zeta_1 e^{i x_1}, \ldots, \zeta_t e^{i x_t})^n}{\vzeta^{\vm} e^{i \sum_{j=1}^t m_j x_j}}
    d \vx.
\]
We would like to inject in the previous expression the function $\phi$,
but it might not be analytic everywhere on $]-\pi,\pi]^t$.
Since the support of $B$ spans the identity matrix, $B$ is not the zero function.
Its coefficients are nonnegative and $\vzeta$ has positive coefficients,
so $B(\vzeta)$ is a positive real number.
According to Lemma~\ref{th:multivariate_daffodil},
on the torus of radii $\vzeta$,
$|B(\vz)|$ reaches its unique maximum at $\vzeta$.
Therefore, there is a compact neighborhood $\mN$ of the origin in $\reals^t$
such that for all $\vm/n$ in $\mK$,
\begin{enumerate}
\item
$\phi$ is analytic on $\mN$,
\item \label{it:unimodular}
the Hessian matrix of $\phi$ is nonsingular on $\mN \setminus \{\vzero\}$
and Lemma~\ref{th:morse} is applicable,
\item 
for any $\vx$ in $\mN$ and $\vy$ in $]-\pi,\pi]^t \setminus \mN$, we have
\[
  \left| B(\zeta_1 e^{i y_1}, \ldots, \zeta_t e^{i y_t}) \right|
  \leq
  \left| B(\zeta_1 e^{i x_1}, \ldots, \zeta_t e^{i x_t}) \right|,
\]
\end{enumerate}
Thus, there is a value $\delta$ in $]0,1[$ such that
for all $\vx$ in $[-\pi,\pi[ \setminus \mN$,
\[
    |B(\zeta_1 e^{i x_1}, \ldots, \zeta_t e^{i x_t})| \leq B(\vzeta) \delta.
\]
This implies that the integral outside $\mN$ is exponentially negligible compared to the final result, because
\[
    \frac{1}{(2 \pi)^t}
    \int_{\vx \in ]-\pi,\pi]^t \setminus \mN}
    \left|
    A(\zeta_1 e^{i x_1}, \ldots, \zeta_t e^{i x_t})
    \frac{B(\zeta_1 e^{i x_1}, \ldots, \zeta_t e^{i x_t})^n}{\vzeta^{\vm} e^{i \sum_{j=1}^t m_j x_j}}
    \right|
    d \vx
    =
    \bigO \left( \delta^n \frac{B(\vzeta)^n}{\vzeta^{\vm}} \right).
\]
Therefore, we reduce the domain of integration to $\mN$
\[
    [\vz^{\vm}] A(\vz) B(\vz)^n =
    \frac{1}{(2 \pi)^t}
    \int_{\vx \in \mN}
    A(\zeta_1 e^{i x_1}, \ldots, \zeta_t e^{i x_t})
    \frac{B(\zeta_1 e^{i x_1}, \ldots, \zeta_t e^{i x_t})^n}{\vzeta^{\vm} e^{i \sum_{j=1}^t m_j x_j}}
    d \vx
    + \bigO \left( \delta^n \frac{B(\vzeta)^n}{\vzeta^{\vm}} \right).
\]
Multiplying and dividing by $B(\vzeta)^n e^{i n \sum_{j=1}^t \frac{m_j}{n} x_j}$,
and injecting the notation $\phi$ from the theorem, we obtain
\[
    [\vz^{\vm}] A(\vz) B(\vz)^n =
    \frac{B(\vzeta)^n}{\vzeta^{\vm} (2 \pi)^t}
    \int_{\vx \in \mN}
    A(\zeta_1 e^{i x_1}, \ldots, \zeta_t e^{i x_t})
    e^{- n \phi(\vx, \vm/n)}
    d \vx
    + \bigO \left( \delta^n \frac{B(\vzeta)^n}{\vzeta^{\vm}} \right).
\]
An application of Lemma~\ref{th:laplace_fourier} concludes the proof.
\end{proof}

\begin{lemma} \label{th:appendix_mgpos_asymptotics}
Consider a positive integer $d$
and integers $n$ and $k$ going to infinity
such that $\alpha := k/n$ stays in a compact set $\mK$ of $\reals_{>0}$.
Let $\lambda$ and $\zeta$ denote the unique positive solutions of the equations
\[
    \frac{\lambda}{2} \frac{e^{\lambda}+1}{e^{\lambda}-1} = \alpha + 1,
    \quad
    T(\zeta) = \alpha+1-\frac{\lambda}{2},
\]
$A(z,x)$ a bivariate function 
analytic on the closed torus of radii $(\zeta, \lambda)$,
and
\[
    B(z,x) = \frac{1}{1 - T(z) \frac{e^x-1-x}{x^2/2}}.
\]
Then the following asymptotic expansion holds uniformly for $\alpha$ in $\mK$
\[
    [z^n x^{2k}]
    A(z,x)
    B(z,x)^k
    =
    \frac{B(\zeta, \lambda)^k}{2 \pi k \zeta^n \lambda^{2k}}
    \bigg(
    \sum_{r=0}^{d-1}
    c_r(\alpha) k^{-r}
    + \bigO(k^{-d})
    \bigg),
\]
with
\[
    B(\zeta, \lambda) = \frac{\lambda}{2 \alpha},
    \quad
    \zeta = e^{-\alpha-1} \sqrt{(\alpha+1)^2 - (\lambda/2)^2},
    \quad
    c_0(\alpha) =
    A(\zeta, \lambda)
    \sqrt{\frac{\alpha^3}{\lambda}
    \frac{\lambda/2-\alpha}{\lambda^2/4-\alpha^2-\alpha}}
\]
and the formula for the other $(c_r(\alpha))$ is as follows.
There is a biholomorphic function $(x,y) \mapsto \psi(x,y,\alpha) = (\psi_1(x,y,\alpha), \psi_2(x,y,\alpha))$
defined a neighborhood of $(0,0)$ and sending $(0,0)$ to $(0,0)$
such that, with the notation $\psi_j = \psi_j(x,y,\alpha)$ for $j$ equal to $1$ or $2$,
\[
    - \log \left( \frac{B(\zeta e^{i \psi_1}, \lambda e^{i \psi_2})}{B(\zeta,\lambda)} \right)
    + i \left( \frac{\psi_1}{\alpha} + 2 \psi_2 \right) =
    \frac{x^2+y^2}{2}.
\]
Its Jacobian matrix is denoted by $J_\psi(x,y,\alpha)$, and we have
\[
    c_r(\alpha) =
    \sum_{t=0}^r
    (2t-1)!! (2(r-t)-1)!! [x^{2t} y^{2(r-t)}]
    A(\zeta e^{i \psi_1(x,y)}, \lambda e^{i \psi_2(x,y)}) \det(J_{\psi}(x,y,\alpha)).
\]
Each $c_r$, $\lambda$ and $\zeta$ are smooth functions of $\alpha$.
\end{lemma}

\begin{proof}
To apply Theorem~\ref{th:large_powers}
with $t=2$, $\lambda_1 = 1/\alpha$, $\lambda_2 = 2$,
we check its hypothesis.
First, the system of equations
\[
    \frac{\zeta \partial_1 B(\zeta, \lambda)}{B(\zeta, \lambda)} = \frac{1}{\alpha},
    \quad
    \frac{\lambda \partial_2 B(\zeta, \lambda)}{B(\zeta, \lambda)} = 2
\]
needs to be solved. Using the classical relation from Lemma~\ref{th:cayley})
\[
    z T'(z) = \frac{T(z)}{1-T(z)},
\]
it is equivalent with
\[
    \frac{\lambda}{2} \frac{e^{\lambda}+1}{e^{\lambda}-1} = \alpha + 1,
    \quad
    T(\zeta) = \alpha + 1 - \frac{\lambda}{2}. 
\]
Since the function $\frac{x}{2} \frac{e^{x}+1}{e^{x}-1}$
is strictly increasing on $\reals_{>0}$ and has value $1$ at $0$,
there is a unique positive solution $\lambda$ to the first equation,
and the implicit function theorem states that $\lambda$
is a smooth function of $\alpha$.
This equation implies
\[
    \frac{e^{\lambda}-1}{\lambda} = \frac{1}{\alpha+1-\frac{\lambda}{2}}.
\]
The left hand-side is clearly greater than $1$,
so $\alpha < \lambda/2$.
The function $T(x)$ is strictly increasing on $[0,1/e[$, $T(0) = 0$
and $\lim_{x \to 1/e} T(x) = +\infty$,
so there is a unique positive solution $\zeta$ to the second equation,
which is, again, a smooth function of $\alpha$ by the implicit function theorem.
Using the relation $z = T(z) e^{-T(z)}$ from Lemma~\ref{th:cayley},
the previous system admits the following useful reformulation
\[
    e^{\lambda} = \frac{\alpha+1+\frac{\lambda}{2}}{\alpha+1-\frac{\lambda}{2}},
    \quad
    \zeta = e^{-\alpha-1} \sqrt{(\alpha+1)^2 - \left(\lambda/2 \right)^2}
\]
With the help of the computer algebra system \cite{sagemath},
we find that the Hessian matrix $\mH_{\phi}(x,y)$ of
\[
    \phi(x,y) =
    - \log \left( \frac{B(\zeta e^{i x}, \lambda e^{i y})}{B(\zeta, \lambda)} \right)
    + i \left( \frac{x}{\alpha} + 2 y \right)
\]
has coefficients at the origin equal to
\begin{align*}
    \mH_{1,1}(0,0) &= \frac{1}{(1-T(\zeta))^2} \frac{1}{\alpha} + \frac{1}{\alpha^2},
    \\
    \mH_{1,2}(0,0) &= \mH_{2,1}(0,0) = \frac{2}{1-T(\zeta)} + \frac{2}{\alpha},
    \\
    \mH_{2,2}(0,0) &= \lambda ( 1 - T(\zeta)) \frac{1}{\alpha} + 2 \lambda,
\end{align*}
so its determinant is equal to
\[
    \det(\mH(0,0)) =
    \frac{\lambda}{\alpha^3}
    \frac{\lambda^2/4-\alpha^2-\alpha}{\lambda/2-\alpha}.
\]
In the numerator $\lambda^2/4-\alpha^2-\alpha$,
we replace $\alpha$ with its expression $\frac{\lambda}{2} \frac{e^{\lambda}+1}{e^{\lambda}-1} -1$, and obtain
\[
    - \frac{\lambda^2}{(e^{\lambda}-1)^2}
    + \frac{\lambda(1-\lambda)}{e^{\lambda}-1}
    + \frac{\lambda}{2},
\]
which is a function of $\lambda$, positive for any $\lambda > 0$.
Since $\alpha < \lambda/2$,
the determinant $\det(\mH(0,0))$ is strictly positive
and the matrix is nonsingular.
Thus, according to Lemma~\ref{th:morse},
there is a biholomorphic function
$\psi(x,y) = (\psi_1(x,y), \psi_2(x,y))$
that maps a neighborhood of $(0,0)$
to a neighborhood of $(\zeta, \lambda)$, such that
\[
    \phi(\psi(x,y)) = \frac{x^2+y^2}{2}.
\]
Let $J_{\psi}(x,y)$ denote its Jacobian matrix.
The hypothesis of Theorem~\ref{th:large_powers} are satisfied, so
\[
    [z^n x^{2k}]
    A(z,x)
    B(z,x)^k
    =
    \frac{B(\zeta, \lambda)^k}{2 \pi k \zeta^n \lambda^{2k}}
    \bigg(
    \sum_{r=0}^{d-1}
    c_r k^{-r}
    + \bigO(k^{-d})
    \bigg),
\]
where each $c_r$ is equal to
\[
    c_r =
    \sum_{a=0}^r
    (2a-1)!! (2(r-a)-1)!!
    [x^{2a} y^{2(r-a)}]
    A(\zeta e^{i \psi_1(x,y)}, \lambda e^{i \psi_2(x,y)})
    \det(J_{\psi}(x,y)),
\]
and in particular, $c_0 = A(\zeta,\lambda) \det(\mH(0,0))^{-1/2}$.
Injecting the values of $\det(\mH(0,0))$,
of $T(\zeta) = \alpha + 1 - \lambda/2$,
of $e^{\lambda} = 1 + \lambda / (\alpha+1-\lambda/2)$
and of
\[
    B(\zeta, \lambda)
    =
    \left( 1 - T(\zeta) \frac{e^{\lambda}-1-\lambda}{\lambda^2/2} \right)^{-1}
    =
    \frac{\lambda}{2 \alpha}
\]
concludes the proof.
\end{proof}

In this paper, many coefficient extractions of the form
\[
    [z^n x^{2k}] F_k(T(z),x)
\]
appeared.
We chose not to apply the natural change of variable $t = T(z)$,
because the resulting generating functions
might not have nonnegative coefficients anymore
-- a property that simplifies the asymptotic analysis
and is required for the application of Theorem~\ref{th:large_powers}.

\section{Asymptotic expansion of the coefficients of divergent series}
\label{sec:divergent_series}

The number of connected multigraphs with $n$ vertices and excess $k$
is expressed using the generating function of positive multigraphs
\[
    \cmg_{n,k} = 2^{n+k} (n+k)! n! [z^n y^k] \log \bigg( 1 + \sum_{\ell \geq 1} \mgpos_{\ell}(z) y^{\ell} \bigg).
\]
A similar expression links connected graphs to positive graphs.
Given the rapid growth of the asymptotics of $\mgpos_{n,k}$
with respect to $k$, the work of \cite{Be75} comes to mind
to extract the asymptotic expansion of $\cmg_{n,k}$.
Informally, his Theorem~$1$ states that
when the sequence $(g_k)_{k \geq 1}$ grows fast enough to infinity
and $f(y)$ has a nonzero radius of convergence, then
\[
    [y^k] f\bigg(\sum_{\ell \geq 1} g_{\ell} y^{\ell}\bigg) =
    \sum_{r=0}^d
    g_{k-r}
    [y^r] f'\bigg(\sum_{\ell \geq 1} g_{\ell} y^{\ell}\bigg)
    + \bigO(g_{k-d-1}).
\]
In this expression, observe that there is a finite number of summands,
indexed by $r$, and that
$[y^r] f'(\sum_{\ell \geq 1} g_{\ell} y^{\ell})$
is a finite sum of product of terms from $(g_{\ell})_{\ell \geq 1}$.
Thus, the asymptotics of the $r$th summand when $k$ is large
is driven by $g_{k-r}$.
To apply this theorem, we would set
\[
    f(y) = \log(1+y)
    \quad
    \text{ and }
    \quad
    g_k = \mgpos_k(z).
\]
Because we are dealing with this extra variable $z$,
our problem does not fit as it is in the theorems of \cite{Be75}.
However, we can follow his proof to obtain a similar result,
stated in Lemma~\ref{th:bender_like}.
This lemma is applied in Lemmas~\ref{th:bender_applied}
and~\ref{th:csg_bender_applied}.

This section is thus strongly inspired by \cite{Be75},
and our presentation follows a recent extension of his work
provided by \cite{Bo16}.
In particular, Lemmas~\ref{th:df_two},
\ref{th:df_two_stripping},
\ref{th:df_poly},
and~\ref{th:df_product}
are copied from~\cite{Bo16},
recalled here for completeness,
and the other lemmas are simple corollaries.

\begin{lemma}[\cite{Bo16}] \label{th:df_two}
For any positive value $\beta$,
there exists a positive value $C = (2 + \beta) \Gamma(\beta)$ such that
\[
    \sum_{\ell=0}^k \Gamma(\ell + \beta) \Gamma(k-\ell+\beta)
    \leq
    C \Gamma(k+\beta)
\]
for all nonnegative integers $k$.
\end{lemma}

\begin{proof}
The inequality holds for $k=0$, so we consider the case $k \geq 1$.
The function $\Gamma$ is log-convex in $\reals_{>0}$.
The log-convexity is preserved under shifts, reflections,
and multiplication by a log-convex function,
so the function
\[
    G_k(\ell) = \Gamma(\ell + \beta) \Gamma(k-\ell+\beta)
\]
is log-convex for $\ell$ in $[0,k]$.
Any log-convex function reaches its maximum on the boundary of its domain.
Furthermore, $G$ has the reflexion property $G_k(k-\ell) = G_k(\ell)$ for all $0 \leq \ell \leq k$.
Therefore, $G_k(\ell) \leq G_k(1)$ for all $1 \leq \ell \leq k-1$.
Stripping off the two boundary terms, we obtain
\[
    \sum_{\ell=0}^k G_k(\ell)
    \leq 2 G_k(0) + (k-1) G_k(1)
    \leq 2 G_k(0) + (k-1+\beta) G_k(1).
\]
It follows from $n\Gamma(n) = \Gamma(n+1)$ that $G_k(1) = G_k(0) \frac{\beta}{k-1+\beta}$, so
\[
    \sum_{\ell=0}^k G_k(\ell)
    \leq
    (2 + \beta) G_k(0)
    \leq
    (2 + \beta) \Gamma(\beta) \Gamma(k+\beta),
\]
which concludes the proof.
\end{proof}

\begin{lemma}[\cite{Bo16}] \label{th:df_two_stripping}
For any positive value $\beta$ and nonnegative integer $d$, we have
\[
    \sum_{\ell=d}^{k-d}
    \Gamma(\ell+\beta) \Gamma(k-\ell+\beta)
    = \bigO(\Gamma(k-d+\beta)).
\]
\end{lemma}

\begin{proof}
The left hand side is rewritten as
\[
    \sum_{\ell=0}^{k-2d}
    \Gamma(\ell+d+\beta) \Gamma(k-\ell+d+\beta),
\]
where Lemma~\ref{th:df_two} is applied
with the substitutions $\beta \to d+\beta$ and $k \to k - 2d$.
\end{proof}

\begin{lemma}[\cite{Bo16}] \label{th:df_poly}
For any positive value $\beta$, nonnegative integer $d$,
real value $C$, and polynomial $P$, we have
\[
    \sum_{\ell=d}^{k}
    C^{\ell} P(\ell)
    \Gamma(k-\ell+\beta)
    = \bigO(\Gamma(k-d+\beta)).
\]
\end{lemma}

\begin{proof}
There exists a value $D$ such that
$|C^{\ell} P(\ell)|$ is bounded by $D \Gamma(\ell+\beta)$
for all nonnegative $\ell$, so
\[
    \bigg|
    \sum_{\ell=d}^{k-d}
    C^{\ell} P(\ell)
    \Gamma(k-\ell+\beta)
    \bigg|
    \leq
    D
    \sum_{\ell=d}^{k-d}
    \Gamma(\ell+\beta)
    \Gamma(k-\ell+\beta)
\]
Lemma~\ref{th:df_two_stripping} then implies
\[
    \sum_{\ell=d}^{k-d}
    C^{\ell} P(\ell)
    \Gamma(k-\ell+\beta)
    = \bigO(\Gamma(k-d+\beta)).
\]
The remainder of the sum
\[
    \sum_{\ell=k-d+1}^{k}
    C^{\ell} P(\ell)
    \Gamma(k-\ell+\beta)
\]
is rewritten, after the change of variable $m = k-\ell$, as
\[
    \sum_{m=0}^{d-1}
    C^{k-m} P(k-m)
    \Gamma(m+\beta),
\]
which is also a $\bigO(\Gamma(k-d+\beta))$.
\end{proof}

\begin{lemma}[\cite{Bo16}] \label{th:df_product}
For any positive value $\beta$,
there exists a positive value $C$ such that
for all integers $k \geq 0$ and $q \geq 1$, we have
\[
    \sum_{\substack{k_1 + \cdots + k_q = k\\ \forall j,\ k_j \geq 0}}
    \prod_{j=1}^q
    \Gamma(k_j + \beta)
    \leq
    C^q \Gamma(k+\beta).
\]
\end{lemma}

\begin{proof}
The value $C$ is the same as Lemma~\ref{th:df_two}.
We prove the lemma by induction on $q$.
For $q=1$, since $C \geq 1$, the inequality holds.
For any $q \geq 1$, let us separate the sum on $k_{q+1}$
\[
    \sum_{\substack{k_1 + \cdots + k_{q+1} = k\\ \forall j,\ k_j \geq 0}}
    \prod_{j=1}^{q+1}
    \Gamma(k_j + \beta)
    =
    \sum_{k_{q+1} = 0}^k
    \Gamma(k_{q+1}+\beta)
    \sum_{\substack{k_1 + \cdots + k_q = k-k_{q+1}\\ \forall j,\ k_j \geq 0}}
    \prod_{j=1}^q
    \Gamma(k_j + \beta).
\]
If the lemma holds at $q$, we obtain
\[
    \sum_{\substack{k_1 + \cdots + k_{q+1} = k\\ \forall j,\ k_j \geq 0}}
    \prod_{j=1}^{q+1}
    \Gamma(k_j + \beta)
    \leq
    \sum_{k_{q+1} = 0}^k
    \Gamma(k_{q+1}+\beta)
    C^q \Gamma(k-k_{q+1}+\beta).
\]
An application of Lemma~\ref{th:df_two} concludes the proof.
\end{proof}

\begin{lemma} \label{th:df_stripping_product}
For any positive value $\beta$,
there exists a positive value $C$ such that
for all integers $k \geq 0$ and $q \geq 1$, we have
\[
    \sum_{\substack{k_1 + \cdots + k_q = k\\ \forall j,\ k_j \geq 1}}
    \prod_{j=1}^q
    \Gamma(k_j + \beta)
    \leq
    C^q \Gamma(k-q+1+\beta).
\]
\end{lemma}

\begin{proof}
After the change of variable $k_j \to k_j-1$,
the left hand side becomes
\[
    \sum_{\substack{k_1 + \cdots + k_q = k\\ \forall j,\ k_j \geq 1}}
    \prod_{j=1}^q
    \Gamma(k_j + \beta)
    =
    \sum_{\substack{k_1 + \cdots + k_q = k-q\\ \forall j,\ k_j \geq 0}}
    \prod_{j=1}^q
    \Gamma(k_j + 1 + \beta).
\]
To conclude the proof,
Lemma~\ref{th:df_product} is applied
with the substitution $\beta \to 1 + \beta$ and $k \to k-q$.
\end{proof}

\begin{lemma} \label{th:df_exp_small}
Given a positive value $\beta$, a value $C$,
and a positive integer $d$,
there exists a value $D$ such that
for any positive integer $k$, we have
\[
    \sum_{q \geq 1}
    C^q
    \sum_{\substack{k_1 + \cdots + k_q = k\\ \forall j,\ 1 \leq k_j < d}}
    \prod_{j=1}^q \Gamma(k_j+\beta)
    \leq
    D^k.
\]
\end{lemma}

\begin{proof}
The inequality is satisfied for any $q > k$,
because the left hand side vanishes in that case.
Thus, we consider $1 \leq q \leq k$.
Stirling's bound implies
\[
    \Gamma(k+\beta) = \exactbigO(k^{k+\beta+1/2} e^{-k}),
\]
so there is a constant $E$ such that
for any $k_1 + \cdots + k_q = k$,
with $k_j < d$ for all $j$, we have
\[
    \prod_{j=1}^q \Gamma(k_j+\beta)
    \leq
    E \prod_{j=1}^q k_j^{k_j+\beta+1/2} e^{-k_j}
    \leq
    E d^{k+q(\beta+1/2)} e^{-k}
    \leq
    E d^{k(\beta+3/2)} e^{-k},
\]
because $q \leq k$.
The number of compositions of the integer $k$ is $2^{k-1}$, so
\[
    \sum_{q \geq 1}
    C^q
    \sum_{\substack{k_1 + \cdots + k_q = k\\ \forall j,\ 1 \leq k_j < d}}
    \prod_{j=1}^q \Gamma(k_j+\beta)
    \leq
    2^{k-1}
    |C|^k
    E d^{k(\beta+3/2)} e^{-k},
\]
which grows only exponentially fast.
\end{proof}

\begin{lemma} \label{th:S_k_d}
For any positive value $\beta $, value $C$,
and nonnegative integer $d$, we have
\[
    \sum_{q=1}^k
    C^q
    \sum_{\substack{k_1+\cdots+k_q = k\\ \forall j,\ 1 \leq k_j \leq k-d}}
    \prod_{j=1}^q
    \Gamma(k_j + \beta)
    =
    \bigO(\Gamma(k - d + \beta)).
\]
\end{lemma}

\begin{proof}
The sum is cut into two,
depending on whether $\max_j(k_j)$ is smaller than or not smaller than $d+1$
\[
    \sum_{q=1}^k
    C^q
    \sum_{\substack{k_1+\cdots+k_q = k\\ \forall j,\ 1 \leq k_j < d+1}}
    \prod_{j=1}^q
    \Gamma(k_j+\beta)
    +
    \sum_{q=1}^k
    C^q
    \sum_{\substack{k_1+\cdots+k_q = k\\ \forall j,\ d+1 \leq \max_j(k_j) \leq k-d}}
    \prod_{j=1}^q
    \Gamma(k_j+\beta).
\]
According to Lemma~\ref{th:df_exp_small},
there is a constant $D$ such that the first term
is bounded in absolute value by $D^k$,
which is negligible compared to $\Gamma(k - d + \beta)$.
Thus, we focus on the second term.
Up to a symmetry of order at most $q$,
the maximum is reached by $\ell = k_q$,
so the second term is bounded in absolute value by
\[
    \sum_{q=1}^k
    q |C|^q
    \sum_{\ell = d+1}^{k-d}
    \Gamma(\ell + \beta)
    \sum_{\substack{k_1 + \cdots + k_{q-1} = k-\ell\\ \forall j,\ k_j \geq 1}}
    \prod_{j=1}^{q-1}
    \Gamma(k_j + \beta).
\]
The sums over $q$ and $\ell$ are switched,
and the conditions $k_1 + \cdots + k_{q-1} = k - \ell$
and $k_j \geq 1$ for all $1 \leq j \leq q-1$
imply $q \leq k- \ell + 1$
\[
    \sum_{\ell = d+1}^{k-d}
    \Gamma(\ell + \beta)
    \sum_{q=1}^{k-\ell+1}
    q |C|^q
    \sum_{\substack{k_1 + \cdots + k_{q-1} = k-\ell\\ \forall j,\ k_j \geq 1}}
    \prod_{j=1}^{q-1}
    \Gamma(k_j + \beta).
\]
The summand corresponding to $\ell=k-d$
is a $\bigO(\Gamma(k-d+\beta))$,
so the term can be rewritten
\[
    \sum_{\ell = d+1}^{k-d-1}
    \Gamma(\ell + \beta)
    \sum_{q=1}^{k-\ell+1}
    q |C|^q
    \sum_{\substack{k_1 + \cdots + k_{q-1} = k-\ell\\ \forall j,\ k_j \geq 1}}
    \prod_{j=1}^{q-1}
    \Gamma(k_j + \beta).
    +
    \bigO(\Gamma(k-d+\beta)).
\]
According to Lemma~\ref{th:df_stripping_product},
applied with $k \mapsto k-\ell$ and $q \mapsto q-1$,
there is a constant $D$ such that
\[
    \sum_{\substack{k_1 + \cdots + k_{q-1} = k-\ell\\ \forall j,\ k_j \geq 1}}
    \prod_{j=1}^{q-1}
    \Gamma(k_j + \beta)
    \leq
    D^{q-1}
    \Gamma(k - \ell- q + 2 + \beta),
\]
so the previous expression is bounded by
\[
    \sum_{\ell = d+1}^{k-d-1}
    \Gamma(\ell + \beta)
    \sum_{q=1}^{k-\ell+1}
    q |C|^q D^{q-1}
    \Gamma(k - \ell - q + 2 + \beta)
    +
    \bigO(\Gamma(k-d+\beta)).
\]
There is a constant $E$ such that $E^q > q |C|^q D^{q-1}$
for all $q \geq 1$.
According to Lemma~\ref{th:df_poly},
applied with $k \mapsto k - \ell + 1$ and $\beta \mapsto \beta + 1$,
there is a constant $F$ such that
the second sum is bounded by $F \Gamma(k-\ell+1+\beta)$,
so the previous expression is bounded by
\[
    F
    \sum_{\ell = d+1}^{k-d-1}    
    \Gamma(\ell + \beta)
    \Gamma(k-\ell+1+\beta).
\]
This is a $\bigO(\Gamma(k-d+\beta))$
according to Lemma~\ref{th:df_two_stripping}.
\end{proof}

The following lemma is the main result of this section.
It considers a bivariate formal series
\[
    g(z,y) = \sum_{\ell \geq 1} g_{\ell}(z) y^{\ell},
\]
where $g_{\ell}(z)$ is analytic at the origin,
but the sequence $g_{\ell}(\zeta)$
(for some positive value $\zeta$)
grows factorially,
so that $g(\zeta, y)$ has a zero radius of convergence.

\begin{lemma}
Consider a formal bivariate series
\[
    g(z,y) = \sum_{\ell \geq 1} g_{\ell}(z) y^{\ell}
\]
with nonnegative coefficients,
and assume there exist positive constants $C$, $E$, $\beta$ and $\zeta$
such that for each $\ell$,
\[
    g_{\ell}(\zeta) \leq C E^{\ell} \Gamma(\ell + \beta),
\]
assuming that the radius of convergence of each $g_{\ell}(z)$ is greater than $\zeta$.
Let $f(z)$ be a function analytic at the origin,
then for any positive integer $d$, we have
\[
    [z^n y^k] f(g(z,y)) =
    [z^n]
    \sum_{r=0}^{d-1}
    g_{k-r}(z)
    [y^r]
    f'(g(z,y))
    +
    \bigO
    \left(
    \frac{E^k}{\zeta^n}
    \Gamma(k-d+\beta)
    \right).    
\]
\end{lemma}

\begin{proof}
Replacing $f(z)$ by its series expansion $\sum f_q z^q$, we obtain
\[
    f \bigg( \sum_{\ell \geq 1} g_{\ell}(z) y^{\ell} \bigg)
    =
    \sum_{q \geq 1}
    f_q
    \bigg( \sum_{\ell \geq 1} g_{\ell}(z) y^{\ell} \bigg)^q.
\]
The coefficient extraction $[z^n y^k]$ then gives
\[
    [z^n y^k]
    f \bigg( \sum_{\ell \geq 1} g_{\ell}(z) y^{\ell} \bigg)
    =
    [z^n]
    \sum_{q \geq 1}
    f_q
    \sum_{k_1 + \cdots + k_q = k}
    \prod_{j=1}^q
    g_{k_j}(z),
\]
with the convention $g_0(z) = 0$.
Let us cut this sum in two,
depending on whether $\max_j(k_j)$ is or is not greater than $k-d$
\begin{equation} \label{eq:large_or_small_k}
    [z^n]
    \sum_{q \geq 1}
    f_q
    \sum_{\substack{k_1 + \cdots + k_q = k\\ \forall j,\ \max_j(k_j) > k-d}}
    \prod_{j=1}^q
    g_{k_j}(z)
    +
    [z^n]
    \sum_{q \geq 1}
    f_q
    \sum_{\substack{k_1 + \cdots + k_q = k\\ \forall j,\ 1 \leq k_j \leq k-d}}
    \prod_{j=1}^q
    g_{k_j}(z).
\end{equation}
When $k$ is large compared to $d$ and $\max_j(k_j) > k-d$,
the maximum is reached by a unique $k_j$.
Up to a symmetry of order $q$, we can assume $k_q = \max_j(k_j)$.
Introduction the notation $r = k-k_q$,
the first sum becomes
\[
    [z^n]
    \sum_{q \geq 1}
    q
    f_q
    \sum_{r=0}^{d-1}
    g_{k-r}(z)
    \sum_{k_1 + \cdots + k_{q-1} = r}
    \prod_{j=1}^{q-1}
    g_{k_j}(z).
\]
Switching the sums over $q$ and $r$,
and interpreting the sum over $q$
as a coefficient extraction, we obtain
\[
    [z^n]
    \sum_{r=0}^{d-1}
    g_{k-r}(z)
    [y^r]
    f'(g(z,y)).
\]
It is left to bound the second term of Expression~\eqref{eq:large_or_small_k}.
For any series $h(z)$ with nonnegative coefficients
and radius of convergence at least $\zeta$,
and any integer $n$, we have
\[
    h(\zeta)
    =
    \sum_{j \geq 0} ([z^j] h(z)) \zeta^j
    \geq
    ([z^n] h(z)) \zeta^n,
\]
which implies
\[
    [z^n] h(z) \leq \frac{h(\zeta)}{\zeta^n}.
\]
By the hypothesis of the lemma on $g_{\ell}(\zeta)$,
this implies
\[
    [z^n] \prod_{j=1}^q g_{k_j}(z)
    \leq
    \frac{1}{\zeta^n}
    \prod_{j=1}^q
    g_{k_j}(\zeta)
    \leq
    \frac{1}{\zeta^n}
    \prod_{j=1}^q
    C E^{k_j} \Gamma(k_j + \beta).    
\]
Thus, the second term of Expression~\eqref{eq:large_or_small_k} is bounded by
\[
    \frac{E^k}{\zeta^n}
    \sum_{q \geq 1}
    f_q
    C^q
    \sum_{\substack{k_1 + \cdots + k_q = k\\ \forall j,\ 1 \leq k_j \leq k-d}}
    \prod_{j=1}^q
    \Gamma(k_j + \beta).
\]
Since $f(z)$ has a nonzero radius of convergence,
there exists a positive constant $D$ such that
$f_q \leq D^q$ for all $q$.
An application of Lemma~\ref{th:S_k_d}
then bounds the term as a
\[
    \bigO
    \left(
    \frac{E^k}{\zeta^n}
    \Gamma(k-d+\beta)
    \right),
\]
which achieves the proof.
\end{proof}

\section{Computation of the asymptotic expansion}
\label{sec:computation}

In this section, we provide step by step instructions
to compute the coefficients of the asymptotic expansion
of connected graphs.
To illustrate their practicability,
the first two coefficients are computed at the end of this section,
using the computer algebra system \cite{sagemath}.
Let us first recall the long chain of dependence for their computation.
According to Theorem~\ref{th:csg_asymptotics}, we have
\[
    \csg_{n,k} =
    n! (2k-1)!!
    \frac{B(\zeta, \lambda)^k}{2 \pi k \zeta^n \lambda^{2k}}
    \bigg(
    \sum_{r=0}^{d-1}
    c_r k^{-r}
    + \bigO(k^{-d})
    \bigg),
\]
where $\lambda$ is the unique positive solution of the equation
\[
    \frac{\lambda}{2} \frac{e^{\lambda}+1}{e^{\lambda}-1} = \alpha + 1,
\]
the values of $B(\zeta, \lambda)$ and $\zeta$ are equal to
\[
    B(\zeta, \lambda) = \frac{\lambda}{2 \alpha},
    \quad
    \zeta = e^{-\alpha-1} \sqrt{(\alpha+1)^2 - (\lambda/2)^2}.
\]
The coefficients $(c_s)$ are given by the following expression
(see the proof of the theorem)
\begin{equation} \label{eq:cs}
    c_s =
    \sum_{t=0}^s
    \sum_{r=0}^{s-t}
    \sum_{\ell=0}^{s-t-r}
    a_{r+\ell,s-t}
    b_{r,\ell,t}.
\end{equation}
where the coefficients $(a_{r,t})$ are known
\[
    a_{r,t}
    =
    2^{-t}
    \sum_{t_0 + \cdots + t_{r-1} = t-r}
    \prod_{s=0}^{r-1}
    (2s+1)^{t_s},
\]
and the coefficients $(b_{r,\ell,s})$ are equal to
\begin{equation} \label{eq:brells}
    b_{r,\ell,s}
    =
    \sum_{a=0}^s
    (2a-1)!! (2(s-a)-1)!!
    [x^{2a} y^{2(s-a)}]
    A_{r,\ell}(\zeta e^{i \psi_1(x,y)}, \lambda e^{i \psi_2(x,y)})
    \det(J_{\psi}(x,y)).
\end{equation}
The function $\psi(x,y) = (\psi_1(x,y), \psi_2(x,y))$
is defined in Lemma~\ref{th:mgpos_asymptotics}
using the following function $\phi(x,y)$
\begin{align*}
    \phi(x,y)
    &=
    - \log \left( \frac{B(\zeta e^{i x}, \lambda e^{i y})}{B(\zeta, \lambda)} \right)
    + i \left( \frac{x}{\alpha} + 2 y \right),
    \\
    \phi(\psi(x,y))
    &=
    \frac{x^2+y^2}{2},
\end{align*}
and $J_{\psi}(x,y)$ denotes the Jacobian matrix of $\psi(x,y)$.
The function $A_{r,\ell}(z,x)$ from the proof of Theorem~\ref{th:csg_asymptotics}
is equal to
\[
    A_{r,\ell}(z,x) = x^{2r} \frac{S_r(T(z))}{(1-T(z))^{3r}} C_{\ell}(z,x),
\]
where the polynomial $S_r(T)$ from Lemma~\ref{th:csg_bender_applied},
the polynomial $\sk_k(T)$ and the function $C_{\ell}(z,x)$
from Proposition~\ref{th:sgpos}
are expressed as
\begin{align}
    S_r(T) &= [y^r] \bigg(1 + \sum_{\ell=1}^r \sk_{\ell}(T) y^{\ell} \bigg)^{-1},
    \label{eq:SrT}
    \\
    \sk_k(T) &=
    (1-T)^{3k}
    \sum_{\ell = 0}^k
    (2(k-\ell)-1)!!
    [x^{2(k-\ell)}]
    \frac{\pospatch_{\ell}(T e^x,-1) e^{- T \frac{e^x-1}{2} - T^2 \frac{e^{2x}-1}{4}}}
    {(1-T)^{k-\ell} \big(1- \frac{T}{1-T} \frac{e^x-1-x-x^2/2}{x^2/2}\big)^{k-\ell+1/2}},
    \label{eq:skkT}
    \\
    C_{\ell}(z,x) &=
    x^{2\ell} \pospatch_{\ell}(T(z) e^x,-1) \sqrt{1-T(z)}
    e^{- T(z)\frac{e^x-1}{2}  - T(z)^2 \frac{e^{2x} - 1}{4}}
    B(z,x)^{-\ell+1/2}.
    \label{eq:cellzx}
\end{align}
Finally, a formula for the polynomial $\pospatch_{\ell}(z,u)$
is provided in the next section.
In Section~\ref{sec:explicite_patchworks},
we compute the polynomials $\pospatch_k(z,u)$, $\sk_k(T)$ and $S_r(T)$.
Section~\ref{sec:Arl} provides the Taylor expansion
of $A_{r,\ell}(\zeta e^{i x}, \lambda e^{i y})$.
In Section~\ref{sec:phi}, we explain how to compute
the Taylor expansion of $\phi(x,y)$, $\psi(x,y)$ and $J_{\psi}(x,y)$.
Combining those results in Equation~\eqref{eq:brells},
the coefficients $b_{r,\ell,s}$ are computed,
and finally the coefficients $c_s$, using Equation~\eqref{eq:cs}.

To facilitate the computation of the Taylor expansions,
we apply in this section the following simple lemma.

\begin{lemma} \label{th:composed_taylor}
Consider a function $f(t,u)$ analytic at $(T(\zeta), \lambda)$.
Then the following Taylor expansion holds
\[
    f(T(\zeta e^{i x}), \lambda e^{i y}) =
    \sum_{a,b \geq 0}
    \left( \frac{T(\zeta)}{1-T(\zeta)} \partial_{T(\zeta)} \right)^a
    \left(\lambda \partial_{\lambda} \right)^b
    f(T(\zeta), \lambda)
    \frac{(i x)^a}{a!}
    \frac{(i y)^b}{b!}.
\]
\end{lemma}

\begin{proof}
The classical formula for the Taylor expansion is
\[
    f(T(\zeta e^{i x}), \lambda e^{i y})
    =
    \sum_{a,b \geq 0}
    \left(
    \partial_{x}^a
    \partial_{y}^b
    f(T(\zeta e^{i x}), \lambda e^{i y})
    \right)
    \frac{x^a}{a!}
    \frac{y^b}{b!}.
\]
After the changes of variable $\zeta e^{i x} \mapsto x$, $\lambda e^{i y} \mapsto y$,
the partial derivative becomes
\[
    \partial_{x}^a
    \partial_{y}^b
    f(T(\zeta e^{i x}), \lambda e^{i y})
    =
    i^{a+b}
    (\zeta \partial_{\zeta})^a
    (\lambda \partial_{\lambda})^b
    f(T(\zeta), \lambda).
\]
We apply the relation
$
    z T'(z) = \frac{T(z)}{1-T(z)}
$
from Lemma~\ref{th:cayley}
and a change of variable to obtain
\[
    \partial_{x}^a
    \partial_{y}^b
    f(T(\zeta e^{i x}), \lambda e^{i y})
    =
    i^{a+b}
    \left(\frac{T(\zeta)}{1-T(\zeta)}  \partial_{T(\zeta)}\right)^a
    (\lambda \partial_{\lambda})^b
    f(T(\zeta), \lambda).
\]
\end{proof}

    \subsection{Computation of the polynomials $\pospatch_k(z,u)$, $\sk_k(T)$ and $S_r(T)$} \label{sec:explicite_patchworks}

    \paragraph{Patchworks $\pospatch_k(z,u)$.}

In Lemma~\ref{th:patchworks}, we proved the existence of polynomials $\pospatch_k(z,u)$
(generating functions of the patchworks of excess $k$
that contain neither isolated loops nor isolated double edges)
such that
\[
    \patch_k(z,u) = e^{u z/2 + u z^2/4} \pospatch_k(z,u).
\]
The expressions of those polynomials were far from explicit,
and relied on the enumeration of all multigraphs with excess $k$ and minimum degree at least $3$.
In this section, we derive a formula that allows their computation,
using a computer algebra system, and provide the first few polynomials.

\begin{lemma} \label{th:explicite_patchworks}
Let the generating function of graphs be denoted by
\[
    \sg(z,w) = \sum_{n \geq 0} (1+w)^{\binom{n}{2}} \frac{z^n}{n!}.
\]
The generating function of patchworks is then equal to
\[
    \patch(z,w,u) = \sg \big(z e^{u w/2}, \sg(w,u) e^{-w}-1 \big) e^{-z}.
\]
\end{lemma}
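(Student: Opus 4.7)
The plan is to establish the equivalent identity $P(z,w,u)\, e^z = \sg(z e^{uw/2},\, \sg(w,u) e^{-w} - 1)$ by the symbolic method, through a bijective local decomposition of the objects enumerated by the left-hand side. First I would interpret the factor $e^z$ as allowing additional isolated vertices, so that $P(z,w,u)\, e^z$ enumerates what one may call \emph{loose patchworks}: a labeled vertex set together with a patchwork supported on some subset of those vertices, the remaining vertices being declared isolated. This eliminates the inconvenient constraint that every vertex must lie in some part.

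Next I would decompose a loose patchwork locally. At each vertex $v$ sits a set of loops, each loop being a labeled patchwork edge; with the weighting of Definition~\ref{def:patchworks} this contributes $\sum_{\ell \geq 0} (uw/2)^\ell/\ell! = e^{uw/2}$, giving the vertex atom $Z = z\, e^{uw/2}$. Between each unordered pair $\{v_1, v_2\}$ of distinct vertices, one has some number $m \geq 0$ of labeled oriented patchwork edges together with a set $S$ of double-edge parts; each part is a $2$-element subset of these $m$ edges, and the definition of a patchwork forces $S$ to cover all $m$ edges (otherwise some edge would appear in no part, contradicting $\mg(P) = (\cup V_i, \cup E_i)$).

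The crux of the argument is to identify this pair-structure with a simple graph on vertex set $[m]$ whose edge set is $S$: the covering condition becomes the absence of isolated vertices, and the correspondence is a bijection on labeled structures. The $2^m$ orientation choices for the $m$ patchwork edges cancel with the $1/2^m$ in the patchwork weight $w^m/(2^m m!)$, leaving $w^m/m!$, which is precisely the exponential vertex weight of the inner graph; meanwhile $u$ marks patchwork parts on one side and inner edges on the other. The pair-structure is therefore enumerated by $1 + W$ with $W = \sg(w,u)\, e^{-w} - 1$, the ``$1$'' accounting for the empty pair ($m = 0$) and $W$ for non-empty graphs with no isolated vertex. Assembling $n$ vertex atoms with a $(1+W)$-structure on each of the $\binom{n}{2}$ pairs gives $(z e^{uw/2})^n (1+W)^{\binom{n}{2}}/n!$, and summing over $n$ yields exactly $\sg(z e^{uw/2}, W)$; dividing by $e^z$ returns the claimed formula.

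The main obstacle is the careful verification of the pair-level bijection. In particular one must check that allowing several double-edge parts to share individual edges (which does happen in general patchworks and which is the reason why a naive ``set of disjoint double edges'' interpretation fails) is captured exactly by letting $S$ be an arbitrary edge-covering subset of $\binom{[m]}{2}$, and that the interplay of edge orientations, the symmetry factor $1/(2^m m!)$, and the exponential vertex weight $w^m/m!$ of the inner graph combine weight-preservingly with $u$ simultaneously tracking patchwork parts and inner graph edges.
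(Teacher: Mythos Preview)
Your proposal is correct and is essentially the same argument as the paper's: multiply by $e^z$ to allow isolated vertices, attach a set of loops at each vertex (yielding the factor $e^{uw/2}$), and between each pair of vertices use the bijection sending the multiset of patchwork edges to the vertex set of a simple graph and the double-edge parts to its edges, the covering condition becoming ``no isolated vertex'' and giving the substitution $\sg(w,u)e^{-w}-1$. Your write-up is in fact a bit more explicit than the paper's about the weight bookkeeping (the cancellation of the $2^m$ orientations against the $1/2^m$ factor), which is a welcome addition.
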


\begin{proof}
Let $\mD$ denote the family of patchworks on two vertices $\{1,2\}$
that contains only double edges (\textit{i.e.}\ no loop).
Two parts of such a patchwork $P$ may share at most one edge
(since all parts are distinct).
We now describe a bijection between $\mD$
and the non-empty graphs without isolated vertices.
Let $P$ denote a patchwork from $\mD$,
and $G$ the corresponding graph:
\begin{itemize}
\item
each edge of $\mg(P)$ is represented by a vertex of $G$,
\item
each part of $P$ is a double edge, 
and corresponds to an edge of $G$.
\end{itemize}
There are no loops in $G$ because
each double edge of $P$ contains two distinct edges.
There are no multiple edges in $G$
because the parts of $P$ are distinct.
No vertex of $G$ can be isolated
since all edges of $P$ belong to at least one part.
The generating function of non-empty graphs without isolated vertices is
\[
    \sg(z,w) e^{-z} - 1,
\]
so the generating function of $\mD$ is (without taking into account the two vertices)
\[
    \sg(w,u) e^{-w} - 1.
\]
Any patchwork where a set of isolated vertices has been added 
can be uniquely described as a graph $G$, 
where each edge is replaced with a patchwork from $\mD$,
and a set of loops is added to each vertex.
Therefore, the generating function of patchworks satisfies
\[
  \patch(z,w,u) e^z = \sg \big(z e^{u w/2}, \sg(w,u) e^{-w} - 1\big).
\]
\end{proof}

According to Lemma~\ref{th:patchworks},
generating function of the patchworks of excess $k$
that contain neither isolated loops nor isolated double edges
is a multinomial, equal to
\[
    \pospatch_k(z,u) = [y^k] \patch(z/y,y,u) e^{-u z/2 - u z^2/4}.
\]
In all the other expression involving those multinomial,
the variable $u$ is set to $-1$.
In the following lemma, we derive an exact expression
for the polynomials $(\pospatch_k(z,-1))$.

\begin{lemma} \label{th:pospatch}
For any positive $k$,
the generating function $\pospatch_k(z,-1)$
is a polynomial of degree $3k$
\[
    \pospatch_k(z,-1) =
    \sum_{n=0}^{3k}
    p_{n,k} z^n,
\]
where each coefficient $p_{n,k}$ has the following expression
\[
    p_{n,k} =
    \sum_{\ell,r,s,t}
    \binom{\binom{\ell}{2}}{k+\ell+t-r}
    \binom{s}{n-\ell-s-t}
    \frac{(-1)^{r+t} \ell^{2r}}{\ell! r! s! t! 2^{n+r-\ell-t}},
\]
where the domain of summation of $\ell$, $s$ and $t$ is $\{0,1,\ldots, n\}$,
and the domain of summation of $r$ is $\{0,1,\ldots, n+k\}$.
\end{lemma}

\begin{proof}
The expression of the generating function of patchworks
from Lemma~\ref{th:explicite_patchworks},
evaluated at $u=-1$, is equal to
\[
    \patch(z,w,-1) = \sum_{\ell \geq 0} (1+w)^{\binom{\ell}{2}} e^{- \frac{\ell^2}{2} w} \frac{z^{\ell}}{\ell!} e^{-z}.
\]
The expression from Lemma~\ref{th:patchworks}
\[
    \pospatch_k(z,u) = [y^k] \patch(z/y,y,u) e^{-u z/2 - u z^2/4}
\]
is evaluated at $u=-1$,
and the coefficient $[z^n]$ is extracted
\[
    p_{n,k} = [z^n] \pospatch_k(z,-1) =
    [z^n w^{n+k}] \patch(z,w,-1) e^{z w/2 + z^2 w^2/4}.
\]
Injecting the expression of $\patch(z,w,-1)$, we obtain
\[
    p_{n,k} =
    [z^n w^{n+k}]
    \sum_{\ell \geq 0} (1+w)^{\binom{\ell}{2}} e^{- \frac{\ell^2}{2} w} \frac{z^{\ell}}{\ell!} e^{-z}
    e^{z w/2 + z^2 w^2/4}.
\]
The exponentials are expanded as sums and the coefficients $[z^n w^{n+k}]$ are extracted,
which leads to the result of the lemma
\[
    p_{n,k} =
    \sum_{\ell,r,s,t}
    \binom{\binom{\ell}{2}}{k+\ell+t-r}
    \binom{s}{n-\ell-s-t}
    \frac{(-1)^{r+t} \ell^{2r}}{\ell! r! s! t! 2^{n+r-\ell-t}}.
\]
It is not obvious from this expression
that $p_{n,k}$ vanishes for all $n \geq 3k$.
To prove that the generating function of positive patchworks of excess $k$
is a polynomial of degree $3k$,
we show that those patchworks contain at most $3k$ vertices.
To do so, we try to build a positive patchwork $P$ of excess $k$
with a maximal number of vertices.
If $P$ contains a loop, a vertex can be added
to transform this loop into a double edge,
without changing the excess, which is a contradiction.
If $P$ contains a triple edge linking the vertices $v$ and $w$,
we can remove one of those edges,
and add a vertex of degree $2$, linked to $v$ by a double edge,
without changing the excess.
Again, this is a contradiction with the definition of $P$.
Thus, $P$ contains only double edges,
so $\mg(P)$ can be obtained from a simple graph $G$,
by replacing each edge with a double edge.
If $G$ contained a cycle,
we could remove an edge $(\{v,w\}$ from this cycle,
and add a new vertex linked to $v$.
The corresponding patchwork would have the same excess as $P$,
but one more vertex, which is absurd.
Hence, all components of $G$ are trees.
%
A tree with $t$ vertices contains $t-1$ edges.
Let $t_1$, \ldots, $t_{c}$ denote the number of vertices
of the $c$ connected components of $\mg(P)$,
so the number of vertices of $P$ is $n = t_1 + \cdots + t_{c}$.
Then each component has excess $t_j-2$,
so the excess of $P$ is
\[
    k = t_1 + \cdots + t_{\ell} - 2c = n - 2c.
\]
Hence, the number of vertices of $P$ is maximized
when the number of trees is maximized.
Since each tree has excess at least $1$
(because $P$ is a positive patchwork),
we obtain $c \leq k$, which implies $n \leq 3k$.
This bound is reached by the patchwork
where each of the $k$ connected components
has three vertices,
and two double edges sharing a vertex.
\end{proof}

Using \cite{sagemath}, we computed the first few polynomials $\pospatch_k(z,-1)$
\begin{align*}
    \pospatch_0(z,-1) &=
    1,
    \\
    \pospatch_1(z,-1) &=
    1/8 z^3 + 5/12 z^2 + 1/8 z,
    \\
    \pospatch_2(z,-1) &=
    1/128 z^6 + 5/96 z^5 + 11/576 z^4 - 29/96 z^3 - 133/384 z^2 - 1/48 z,
    \\
    \pospatch_3(z,-1) &=
    1/3072 z^9 + 5/1536 z^8 + 13/9216 z^7 - 1253/20736 z^6 - 385/3072 z^5 + 655/4608 z^4\\
    &\ \ + 1337/3072 z^3 + 129/640 z^2 + 1/384 z.
\end{align*}


Applying Equation~\eqref{eq:skkT} and~\eqref{eq:SrT},
we compute the first few polynomials $\sk_k(T)$ and $S_r(T)$
\begin{align*}
    \sk_0(T) &= 1,
    \\
    \sk_1(T) &= -\frac{1}{24} \, {\left(T - 6\right)} T^{4},
    \\
    \sk_2(T) &= \frac{1}{1152} \, {\left(T^{6} - 12 \, T^{5} + 12 \, T^{4} + 216 \, T^{3} - 552 \, T^{2} + 672 \, T + 48\right)} T^{4},
    \\
    \sk_3(T) &= -\frac{1}{414720} \, \Big(5 \, T^{10} - 90 \, T^{9} + 180 \, T^{8} + 4320 \, T^{7} - 22248 \, T^{6} - 6048 \, T^{5} + 234072 \, T^{4} - 714528 \, T^{3}
    \\ &\ + 905472 \, T^{2} - 671040 \, T - 155520\Big) T^{5},
    \\
    S_0(T) &= 1,
    \\
    S_1(T) &= \frac{1}{24} \, T^{5} - \frac{1}{4} \, T^{4} - 1,
    \\
    S_2(T) &= \frac{1}{1327104} \, {\left(T^{10} - 12 \, T^{9} + 12 \, T^{8} + 216 \, T^{7} - 552 \, T^{6} + 624 \, T^{5} + 336 \, T^{4} + 1152\right)}^{2},
    \\
    S_3(T) &= \frac{1}{71328803586048000} \, \Big(5 \, T^{15} - 90 \, T^{14} + 180 \, T^{13} + 4320 \, T^{12} - 22248 \, T^{11} - 6408 \, T^{10} + 238392 \, T^{9}
    \\&\ - 718848 \, T^{8} + 827712 \, T^{7} - 472320 \, T^{6} - 380160 \, T^{5} - 120960 \, T^{4} - 414720\Big)^{3}.
\end{align*}

One way to check the computation is to verify that for any integer $n$,
$n! [T^n] \frac{\sk_k(T)}{(1-T)^{3 k}}$ is equal to the number of positive graphs
with minimum degree at least 2
with $n$ vertices and excess $k$.

    \subsection{Taylor expansion of $A_{r,\ell}(\zeta e^{i x},\lambda e^{i y})$} \label{sec:Arl}

The function $A_{r,\ell}(z,x)$ can be rewritten
as a function $f(T(z), x)$, analytic at $(T(\zeta), \lambda)$.
According to Lemma~\ref{th:composed_taylor},
the Taylor expansion of $A_{r,\ell}(\zeta e^{i x},\lambda e^{i y})$
is then equal to
\[
    \sum_{a,b \geq 0}
    \left( \frac{T(\zeta)}{1-T(\zeta)} \partial_{T(\zeta)} \right)^a
    \left(\lambda \partial_{\lambda} \right)^b
    f(T(\zeta), \lambda)
    \frac{(i x)^a}{a!}
    \frac{(i y)^b}{b!}.
\]

    \subsection{Taylor expansion of $\psi(x,y)$} \label{sec:phi}

We start with the function $\phi(x,y)$, which is defined as
\[
    \phi(x,y) =
    - \log \left( \frac{B(\zeta e^{i x}, \lambda e^{i y})}{B(\zeta, \lambda)} \right)
    + i \left( \frac{x}{\alpha} + 2 y \right).
\]

\begin{lemma} \label{th:phi}
The Taylor expansion of $\phi(x,y)$ is equal to
\[
    \phi(x,y) =
    \sum_{a+b \geq 2}
    \left( \frac{T(\zeta)}{1-T(\zeta)} \partial_{T(\zeta)} \right)^a
    (\lambda \partial_{\lambda})^b
    \log \left(
    1 - T(\zeta) \frac{e^{\lambda}-1-\lambda}{\lambda^2/2}
    \right)
    \frac{(i x)^a}{a!} \frac{(i y)^b}{b!}.
\]
\end{lemma}

\begin{proof}
By construction, $\phi(0,0) = 0$,
and $\zeta$, $\lambda$ have been chosen
to ensure that the partial derivatives
$\partial_1 \phi(0,0)$ and $\partial_2 \phi(0,0)$
vanish as well.
Therefore, the Taylor expansion of $\phi(x,y)$ at $(0,0)$ is
\[
    \phi(x,y) =
    - \sum_{a+b \geq 2}
    \partial_u^a \partial_v^b
    \log(B(\zeta e^{i u}, \lambda e^{i v}))_{|u=v=0}
    \frac{x^a}{a!} \frac{y^b}{b!}.
\]
Since
\[
    B(\zeta, \lambda) =
    \left(
    1 - T(\zeta) \frac{e^{\lambda}-1-\lambda}{\lambda^2/2}
    \right)^{-1}
\]
is a simple function of $T(\zeta)$ and $\lambda$,
Lemma~\ref{th:composed_taylor} is applicable and concludes the proof.
\end{proof}

%
%


Morse's Lemma ensure the existence
of a biholomorphic function $\psi = (\psi_1, \psi_2)$ of $(x,y)$
mapping $(0,0)$ to $(0,0)$ such that
\[
    \phi(\psi_1, \psi_2) = \frac{x^2 + y^2}{2}.
\]
We now compute the Taylors series at the origin of $\psi_1$ and $\psi_2$.
To do so, we follow the proof of Morse's Lemma.
Let us introduce the functions $\phi_1$, $\phi_2$ and $\phi_3$ of $(\psi_1,\psi_2)$
\begin{align*}
    \phi_1 &=
    \frac{1}{\psi_1^2}
    \sum_{a+b \geq 2}
    \frac{a(a-1)}{(a+b)(a+b-1)}
    [u^a v^b] \phi(u,v)
    \psi_1^a \psi_2^b,
    \\
    \phi_2 &=
    \frac{1}{\psi_1 \psi_2}
    \sum_{a+b \geq 2}
    \frac{2 a b}{(a+b)(a+b-1)}
    [u^a v^b] \phi(u,v)
    \psi_1^a \psi_2^b,
    \\
    \phi_3 &=
    \frac{1}{\psi_2^2}
    \sum_{a+b \geq 2}
    \frac{b(b-1)}{(a+b)(a+b-1)}
    [u^a v^b] \phi(u,v)
    \psi_1^a \psi_2^b.
\end{align*}
Since $\frac{a(a-1)}{(a+b)(a+b-1)} + \frac{2 a b}{(a+b)(a+b-1)} + \frac{b(b-1)}{(a+b)(a+b-1)} = 1$,
those definitions ensure
\[
    \phi(\psi_1,\psi_2) = \psi_1^2 \phi_1 + \psi_1 \psi_2 \phi_2 + \psi_2^2 \phi_3.
\]
This is rewritten as
\[
    \phi(\psi_1,\psi_2) = 
    \left( \psi_1 \sqrt{\phi_1} + \frac{\psi_2 \phi_2}{2 \sqrt{\phi_1}} \right)^2 +
    \psi_2^2 \phi_3 - \left( \frac{\psi_2 \phi_2}{2 \sqrt{\phi_1}} \right)^2.
\]
Since we want $\psi_1$, $\psi_2$ to be solutions of $\phi(\psi_1, \psi_2) = \frac{x^2+y^2}{2}$,
we set
\begin{align*}
    x &= \sqrt{2} \left( \psi_1 \sqrt{\phi_1} + \frac{\psi_2 \phi_2}{2 \sqrt{\phi_1}} \right),\\
    y &= \sqrt{ 2 \left( \psi_2^2 \phi_3 - \left( \frac{\psi_2 \phi_2}{2 \sqrt{\phi_1}} \right)^2 \right) },
\end{align*}
which is equivalent with
\begin{align*}
    \psi_1 &= \frac{x}{\sqrt{2 \phi_1}} - y \frac{\phi_2}{\sqrt{2 \phi_1 (4 \phi_1 \phi_3 - \phi_2^2)}},\\
    \psi_2 &= y \sqrt{\frac{2 \phi_1}{4 \phi_1 \phi_3 - \phi_2^2}}. \nonumber
\end{align*}

\begin{lemma} \label{th:psi}
We define a sequence of pairs of multinomial
$(\psi^{(j)})_j = (\psi_1^{(j)}, \psi_2^{(j)})_j$
in the variable $(x,y)$
using the initialization $\psi^{(0)} = (0,0)$,
and the recurrence relations
\begin{align} \label{eq:psi}
    \psi_1^{(j+1)} &= \frac{x}{\sqrt{2 \phi_1(\psi^{(j)}})} - y \frac{\phi_2(\psi^{(j)})}{\sqrt{2 \phi_1(\psi^{(j)}) (4 \phi_1(\psi^{(j)}) \phi_3(\psi^{(j)}) - \phi_2(\psi^{(j)})^2)}},\\
    \psi_2^{(j+1)} &= y \sqrt{\frac{2 \phi_1(\psi^{(j)})}{4 \phi_1(\psi^{(j)}) \phi_3(\psi^{(j)}) - \phi_2(\psi^{(j)})^2}}. \nonumber
\end{align}
Then for any nonnegative integers $a$, $b$ such that $a+b \leq j$,
$\psi_1^{(j)}$ (\resp $\psi_2^{(j)}$)
and the function $\psi_1$ (\resp $\psi_2$)
have the same monomial $x^a y^b$
in their Taylor expansion at the origin.
\end{lemma}

\begin{proof}
Since $\phi_1$ and $4 \phi_1 \phi_3 - \phi_2^2$
have positive constant terms,
the operator $A = (A_1, A_2)$ defined by
\begin{align*}
    A_1(u,v) &=
    \frac{x}{\sqrt{2 \phi_1(u,v})} - y \frac{\phi_2(u,v)}{\sqrt{2 \phi_1(u,v) (4 \phi_1(u,v) \phi_3(u,v) - \phi_2(u,v)^2)}}
    ,\\
    A_2(u,v) &= y \sqrt{\frac{2 \phi_1(u,v)}{4 \phi_1(u,v) \phi_3(u,v) - \phi_2(u,v)^2}}.
\end{align*}
is an operator from the set of pairs of functions
in the variables $(x,y)$ analytic at $(0,0)$ to itself.
Let us define as $a+b$ the \emph{total degree} of the monomial $x^a y^b$.
We prove the lemma by recurrence.
Since $\psi(0,0) = (0,0)$, the initialization is correct.
Now assume that $\psi$ and $\psi^{(j)}$
have equal coefficients up to the total degree $j$.
Since $A_1$ and $A_2$ have valuation $1$ in $(x,y)$,
all the coefficients of $A(\psi)$ and $A(\psi^{(j)})$ are equal
up to the total degree $j+1$.
We have $\psi^{(j+1)} = A(\psi^{(j)})$,
and we proved before the statement of the lemma
that $\psi$ is solution of the fixed point equation $\psi = A(\psi)$.
Thus, all the coefficients of $\psi$ and $\psi^{(j+1)}$ are equal
up to the total degree $j+1$.
\end{proof}

The computation of the Taylor expansion of
\[
    J_{\psi}(x,y) =
    (\partial_x \psi_1(x,y)) (\partial_y \psi_2(x,y))
    - (\partial_x \psi_2(x,y)) (\partial_y \psi_1(x,y))
\]
is easy once the Taylor expansion of $\psi(x,y)$ is done.

Combining those computations,
we obtain the coefficients of the asymptotic expansion.
For example, we have computed using \cite{sagemath}
\[
  c_0 =
  \frac{\alpha}{\sqrt{2}}
  \frac{\lambda - 2 \alpha}{\sqrt{\lambda^2 - 4 \alpha^2 - 4 \alpha}}
  e^{- (1 + \alpha / 2) \lambda}
\]
and
\begin{align*}
  c_1 =
  \frac{\alpha \sqrt{2}}{384}
  \frac{e^{- (1 + \alpha / 2) \lambda}}
    {\sqrt{\lambda^2 - 4 \alpha^2 - 4 \alpha}}
  \Bigg(
    &\alpha \lambda (\lambda - 2 \alpha) (6 \alpha + 3 \lambda + 8) (2 \alpha + \lambda + 8) (2 \alpha + \lambda + 2)
    \\&
    - \frac{2 \alpha \lambda (\lambda - 2 \alpha - 2)^4 (\lambda - 2 \alpha + 10)}{(\lambda - 2 \alpha)^2}
    - \frac{A}{(\lambda - 2 \alpha)^2 (\lambda^2 - 4 \alpha^2 - 4 \alpha)^3}
  \Bigg)
\end{align*}
where
\begin{align*}
  A &=
12288 \, \alpha^{13}\lambda 
- 18432 \, \alpha^{11}\lambda^{3} 
+ 11520 \, \alpha^{9}\lambda^{5} 
- 3840 \, \alpha^{7}\lambda^{7} 
+ 720 \, \alpha^{5}\lambda^{9} 
- 72 \, \alpha^{3}\lambda^{11} 
+ 3 \, \alpha\lambda^{13} 
\\&
+ 110592 \, \alpha^{12}\lambda 
- 24576 \, \alpha^{11}\lambda^{2} 
- 156672 \, \alpha^{10}\lambda^{3} 
+ 46080 \, \alpha^{9}\lambda^{4} 
+ 81408 \, \alpha^{8}\lambda^{5} 
- 27648 \, \alpha^{7}\lambda^{6} 
- 19584 \, \alpha^{6}\lambda^{7} 
\\&
+ 7296 \, \alpha^{5}\lambda^{8} 
+ 2160 \, \alpha^{4}\lambda^{9} 
- 864 \, \alpha^{3}\lambda^{10} 
- 84 \, \alpha^{2}\lambda^{11} 
+ 36 \, \alpha\lambda^{12} 
+ 393216 \, \alpha^{11}\lambda 
- 221184 \, \alpha^{10}\lambda^{2} 
\\&
- 396288 \, \alpha^{9}\lambda^{3} 
+ 251904 \, \alpha^{8}\lambda^{4} 
+ 129024 \, \alpha^{7}\lambda^{5} 
- 99840 \, \alpha^{6}\lambda^{6} 
- 12672 \, \alpha^{5}\lambda^{7} 
+ 16512 \, \alpha^{4}\lambda^{8} 
- 768 \, \alpha^{3}\lambda^{9} 
\\&
- 960 \, \alpha^{2}\lambda^{10} 
+ 132 \, \alpha\lambda^{11} 
+ 712704 \, \alpha^{10}\lambda 
- 688128 \, \alpha^{9}\lambda^{2} 
- 285696 \, \alpha^{8}\lambda^{3} 
+ 448512 \, \alpha^{7}\lambda^{4} 
- 26880 \, \alpha^{6}\lambda^{5} 
\\&
- 89856 \, \alpha^{5}\lambda^{6} 
+ 20928 \, \alpha^{4}\lambda^{7} 
+ 4608 \, \alpha^{3}\lambda^{8} 
- 1872 \, \alpha^{2}\lambda^{9} 
+ 144 \, \alpha\lambda^{10} 
+ 806912 \, \alpha^{9}\lambda 
- 1183744 \, \alpha^{8}\lambda^{2} 
\\&
+ 306176 \, \alpha^{7}\lambda^{3} 
+ 332800 \, \alpha^{6}\lambda^{4} 
- 211712 \, \alpha^{5}\lambda^{5} 
+ 12544 \, \alpha^{4}\lambda^{6} 
+ 19520 \, \alpha^{3}\lambda^{7} 
- 5440 \, \alpha^{2}\lambda^{8} 
+ 416 \, \alpha\lambda^{9} 
\\&
- 4096 \, \alpha^{9} 
+ 759808 \, \alpha^{8}\lambda 
- 1361920 \, \alpha^{7}\lambda^{2} 
+ 699392 \, \alpha^{6}\lambda^{3} 
+ 78848 \, \alpha^{5}\lambda^{4} 
- 170240 \, \alpha^{4}\lambda^{5} 
+ 42880 \, \alpha^{3}\lambda^{6} 
\\&
- 704 \, \alpha^{2}\lambda^{7} 
- 496 \, \alpha\lambda^{8} 
- 8 \,\lambda^{9} - 20480 \, \alpha^{8} 
+ 628736 \, \alpha^{7}\lambda 
- 1033216 \, \alpha^{6}\lambda^{2} 
+ 534016 \, \alpha^{5}\lambda^{3} 
- 33536 \, \alpha^{4}\lambda^{4} 
\\&
- 41344 \, \alpha^{3}\lambda^{5} 
+ 6720 \, \alpha^{2}\lambda^{6} 
+ 416 \, \alpha\lambda^{7} 
- 28672 \, \alpha^{7} 
+ 296960 \, \alpha^{6}\lambda 
- 436224 \, \alpha^{5}\lambda^{2} 
+ 203776 \, \alpha^{4}\lambda^{3} 
\\&
- 16128 \, \alpha^{3}\lambda^{4} 
- 6016 \, \alpha^{2}\lambda^{5} 
- 12288 \, \alpha^{6} 
+ 10240 \, \alpha^{5}\lambda 
- 72704 \, \alpha^{4}\lambda^{2} 
+ 35328 \, \alpha^{3}\lambda^{3} 
- 28672 \, \alpha^{4}\lambda
\end{align*}
The next terms are too big to fit in a page.

We thank an anonymous referee for providing us with tables of numbers of connected graphs.
This referee computed them using a recurrence obtained by counting
in two different ways connected graphs with one marked edge.
Part of those tables are presented in Figure~\ref{fig:numerical_verification}.
The correct digits obtained by using
a first or second order expansion of $\csg_{n,k}$
are highlighted.

\begin{figure}
\begin{center}
\begin{tabular}{|c|c|c|c|c|c|}
$n \diagdown \alpha$ & $1/10$ & $1/4$ & $1/2$ & $1$ & $2$\\ \hline
$100$ & \textcolor{blue}{7}\textcolor{red}{.02}48161e219 & \textcolor{blue}{1.0}\textcolor{red}{323}4865e247 & \textcolor{blue}{9}\textcolor{red}{.08}81507e287 & \textcolor{blue}{3}\textcolor{red}{.0}930091e361 & \textcolor{red}{6}.2268140e489\\
$200$ & \textcolor{blue}{4.8}\textcolor{red}{25}1554e507 & \textcolor{blue}{2}\textcolor{red}{.861118}04e571 & \textcolor{blue}{9}\textcolor{red}{.68}20759e668 & \textcolor{blue}{2}\textcolor{red}{.22}17143e847 & \textcolor{blue}{1}\textcolor{red}{.1}340686e1167\\
$300$ & \textcolor{blue}{3.7}\textcolor{red}{58}0934e820 & \textcolor{blue}{2.3}\textcolor{red}{03864}19e924 & \textcolor{blue}{1.4}\textcolor{red}{3}10237e1084 & \textcolor{blue}{5}\textcolor{red}{.01}77562e1378 & \textcolor{blue}{3}\textcolor{red}{.2}403053e1912\\
$400$ & \textcolor{blue}{5}\textcolor{red}{.294}5796e1149 & \textcolor{blue}{5}\textcolor{red}{.487049}75e1295 & \textcolor{blue}{3.0}\textcolor{red}{554}585e1521 & \textcolor{blue}{3}\textcolor{red}{.899}6489e1939 & \textcolor{blue}{1}\textcolor{red}{.79}51048e2702\\
$500$ & \textcolor{blue}{9}\textcolor{red}{.05}69880e1490 & \textcolor{blue}{7.0}\textcolor{red}{22662}24e1680 & \textcolor{blue}{1.9}\textcolor{red}{405}577e1975 & \textcolor{blue}{2}\textcolor{red}{.760}5105e2522 & \textcolor{blue}{8}\textcolor{red}{.4}704376e3524\\
$600$ & \textcolor{blue}{6}\textcolor{red}{.576}4746e1841 & \textcolor{blue}{7.8}\textcolor{red}{2032}799e2076 & \textcolor{blue}{1.6}\textcolor{red}{459}619e2442 & \textcolor{blue}{6}\textcolor{red}{.146}6205e3122 & \textcolor{red}{6.9}645842e4373\\
$700$ & \textcolor{blue}{4.8}\textcolor{red}{343}130e2200 & \textcolor{blue}{1.0}\textcolor{red}{866635}8e2482 & \textcolor{blue}{1.1}\textcolor{red}{45}6019e2920 & \textcolor{blue}{4}\textcolor{red}{.85}60190e3737 & \textcolor{blue}{3}\textcolor{red}{.79}57600e5244\\
$800$ & \textcolor{blue}{2.5}\textcolor{red}{299}174e2566 & \textcolor{blue}{9.2}\textcolor{red}{30277}63e2894 & \textcolor{blue}{1.7}\textcolor{red}{548}159e3407 & \textcolor{blue}{1}\textcolor{red}{.0942}901e4365 & \textcolor{red}{9.90}71844e6133\\
$900$ & \textcolor{blue}{1.29}\textcolor{red}{55}157e2938 & \textcolor{blue}{5.0}\textcolor{red}{27281}58e3314 & \textcolor{blue}{3.9}\textcolor{red}{548}560e3902 & \textcolor{blue}{1.9}\textcolor{red}{10}1224e5003 & \textcolor{blue}{5}\textcolor{red}{.54}99666e7039\\
$1000$ & \textcolor{blue}{1.39}\textcolor{red}{142}34e3315 & \textcolor{blue}{3.0}\textcolor{red}{510646}2e3740 & \textcolor{blue}{1.6}\textcolor{red}{0649}97e4405 & \textcolor{blue}{1.5}\textcolor{red}{722}527e5651 & \textcolor{blue}{1.0}\textcolor{red}{03}2961e7960
\end{tabular}
\caption{The exact number of connected graphs with $n$ vertices and $(\alpha + 1) n$ edges.
In blue, the correct digits obtained from using only the constant term $c_0$.
In red, the additional correct digits obtained when the first error term $c_1$ is used.}
\label{fig:numerical_verification}
\end{center}
\end{figure}

\end{document}